\numberwithin{equation}{section}
\title{Cycle reversions and dichromatic number in tournaments}
\date{\today}
\newtheorem{prop}{Proposition}[section]
\newtheorem{lemma}[prop]{Lemma}
\newtheorem{cor}[prop]{Corollary}
\newtheorem{con}[prop]{Conjecture}
\newtheorem{theorem}[prop]{Theorem}
\newtheorem{theorem?}[prop]{Theorem???}
\newtheorem{definition}[prop]{Definition}
\theoremstyle{definition}
\newtheorem*{mthm}{Main Theorem}
\newtheorem{claim}[prop]{Claim}
\newtheorem{obs}[prop]{Observation}
\newtheorem{prob}[prop]{Problem}
\DeclareMathOperator{\cf}{cf}
\newcommand{\lex}{<_{\scalebox{0.8}{\textrm{lex}}}}
\newcommand{\tri}{\vartriangleleft}
\newcommand{\la}{\langle}
\newcommand{\ra}{\rangle}
\DeclareMathOperator{\St}{St}
\newcommand{\mc}[1]{\mathcal{#1}}
\newcommand{\mb}[1]{\mathbb{#1}}
\newcommand{\oo}{\omega}
\newcommand{\uhr}{\upharpoonright}
\newcommand{\omg}{{\omega_1}}
\newcommand{\ran}{\text{ran}}
\newcommand{\mf}[1]{\mathfrak{#1}}
\newcommand{\crs}[1]{\mathfrak{r}(#1)}
\newcommand{\chr}[1]{\chi(#1)}
\newcommand{\dchr}[1]{\arr{\chi}(#1)}
\newcommand{\arr}[1]{\overrightarrow{#1}}
\newcommand{\larr}[1]{\overleftarrow{#1}}
\newcommand{\setm}{\setminus}
\newcommand{\rev}[2]{{#1}\hspace{-0.08 cm}\rcirclearrowright {#2}}
\newcommand{\smf}{\hspace{0.008 cm}^\smallfrown}
\newcommand{\rs}[1]{\mb{RS}({#1})}
\newcommand{\rsd}[1]{\mb{RS}^\Delta({#1})}
\author{Paul Ellis}
\address[P. Ellis]{Department of Mathematics and Computer Science, 
Manhattanville College, 2900 Purchase Street, Purchase, NY 10577, USA}
\email[]{paulellis@paulellis.org}
\urladdr{http://paulellis.org/}
\author{D\'aniel T. Soukup}
  \address[D.T. Soukup]{Universität Wien,
 Kurt Gödel Research Center for Mathematical Logic, Austria}
  \email[Corresponding author]{daniel.soukup@univie.ac.at}
\urladdr{http://www.logic.univie.ac.at/$\sim  $soukupd73/}
\newcommand{\DD}{\color{black}}
\subjclass[2010]{05C20, 05C63, 05C15, 05C38, 03E05}
\keywords{dichromatic number, acyclic, transitive, cycle reversion, tournaments}
\begin{document} 
  \begin{abstract}
  We show that if $D$ is a tournament of arbitrary size then $D$ has finite strong components after reversing a locally finite sequence of cycles. In turn, we prove that any tournament can be covered by two acyclic sets after reversing a locally finite sequence of cycles. This provides a partial solution to a conjecture of S. Thomass\'e.
 \end{abstract}
 
 \maketitle
  %\todo{Red text marks the new changes.}
 
\setcounter{tocdepth}{1} 
 \tableofcontents

 \section{Introduction}
 
 We are motivated by the following universal question: how regular can we make a complicated/random structure by a simple operation? A prime example in the setting of digraphs is the well-studied problem of finding \textit{feedback arc sets} (FAS, in short) \cite{festa}. A feedback arc set of a digraph $D$ {\DD (with arc set $A(D)$)} is a set $F\subseteq A(D)$ so that \textit{removing} $F$ from $A(D)$ makes the resulting digraph acyclic i.e. $A(D)\setm F$ contains no directed cycles.  Finding \emph{some} feedback arc set is easy, because one can linearly order the vertices and take all the backward pointing arcs. On the other hand, finding an FAS with the \emph{smallest possible size} is hard: this appears in Karp's famous list of NP-complete problems as number 8 \cite{karp} and the problem is still NP-hard even restricted for tournaments \cite{fasNP}.
 
%In this paper, when we write \emph{digraph}, we always mean an oriented graph, that is, one with no bidirected arcs.  If $u,v$ are vertices, then we use the word \emph{arc} to refer to the directed edge $uv$, and the word \emph{edge} to refer to the unordered edge $\{u,v\}$.  Let $E(D)$ denote the set of edges of $D$, $A(D)$ denote the set of arcs of $D$, and $d^+(v)$, $d^-(v)$ denote the outdegree and indegree of $v$, respectively. 
 
 Now, it is easy to see that if $F$ is an FAS which is \textit{minimal} with respect to inclusion then it has the additional property that if one \textit{reverses} the arcs in $F$, instead of removing $F$,  then $D$ becomes acyclic i.e. $A(D)\setm F\cup \{vu:uv\in F\}$ is acyclic. The interested reader is referred to \cite{garth} and \cite{koh} for related discussions.
 
 Instead of reversing an arbitrary set of arcs, we consider  the following operation: given a digraph $D$, we take a directed cycle and \emph{reverse the orientation of arcs along this cycle}. That is, if $\mb C(D)$ denotes the directed cycles of a digraph $D$ and $C\in \mb C(D)$ then we let $$\rev{D}{C}$$ denote the digraph on vertices $V(D)$ and arcs $(A(D)\setm A(C))\cup \{vu:uv\in A(C)\}$. This is a rather benign operation compared to reversing an arbitrary set of arcs: the in-and out-degrees of any vertex in $D$ and $\rev{D}{C}$ are the same, as well as the  strong components of $D$ and $\rev{D}{C}$, and $\rev{D}{C}$ is definitely not acyclic.
 
Naturally, one can repeatedly turn cycles one after the other, always working in the resulting graph; we dubbed this operation \emph{sequential cycle reversion}. In the case of infinite graphs and infinitely many cycles, we require that each edge appears in only finitely many arcs in the sequence i.e. the sequence is \emph{locally finite}. This ensures that at each limit step we have a well-defined graph. How do we define limits of digraphs in this setting? Suppose that $\la D_\xi:\xi<\zeta\ra$ is a sequence of digraphs on the same vertex set $V$ and edge set $E$ so that for each $uv\in E$ there is a $\nu<\zeta$ so that either $uv\in A(D_\xi)$ or $vu\in A(D_\xi)$ for all $\nu<\xi<\zeta$ (i.e. each arc stabilizes eventually). Then let $D=\lim_{\xi<\zeta}D_\xi$ be the digraph with vertex set $V$ and edge set $E$ so that $uv\in A(D)$ iff  for some $\nu<\zeta$, $uv\in A(D_\xi)$ for all $\nu<\xi<\zeta$.
 
 To be completely precise, we can define $\rs D$, the \emph{reversal sequences} of $D$, and the reversed digraph $\rev{D}{\mc C}$ for $\mc C\in \rs D$ simultaneously as follows:
 
 \begin{enumerate}
  \item $\emptyset\in \rs D$ and $\rev{D}{\emptyset}=D$,
  \item if $\mc C\in \rs D$ and $C^*$ is a directed cycle in $\rev{D}{\mc C}$ then $\mc C^*=\mc C \smf \la C^*\ra\in \rs D$ and $\rev{D}{\mc C^*}=\rev{(\rev{D}{\mc C})}{C^*}$, and 
  \item if $\mc C_\xi\in \rs D$  for $\xi<\zeta$ is an increasing sequence so that $\mc C=\bigcup\{\mc C_\xi:\xi<\zeta\}$ is locally finite (i.e. each arc appears in only finitely many cycles in the sequence) then $\mc C\in \rs D$ and $\rev{D}{\mc C}=\lim_{\xi<\zeta}(\rev{D}{\mc C_\xi}).$ 
 \end{enumerate}

 Now we are interested in the following problem: given a complicated digraph $D$, how simple $\rev{D}{\mc C}$ can be for some $\mc C\in \rs D$? In particular, our main motivation is the following beautiful conjecture of S. Thomass\'e:
 
 \begin{con}\cite{thom}
  Given a (finite or infinite) digraph $D$, there is a $\mc C\in \rs D$ so that $\rev{D}{\mc C}$ is covered by two acyclic subgraphs.
 \end{con}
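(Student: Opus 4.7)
My plan is to follow the two-phase strategy suggested by the abstract: first, use a locally finite cycle reversion sequence to make every strong component of $D$ finite, and second, work within each finite strong component and reduce its dichromatic number to at most $2$ via further local reversions.

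For the first phase, I would enumerate the vertex set $V(D)=\{v_\alpha:\alpha<\kappa\}$ and, by transfinite recursion, build an increasing chain $\mc C_\alpha\in \rs D$. At stage $\alpha$, inspect the current strong component of $v_\alpha$ in $\rev{D}{\mc C_\alpha}$; if it is infinite, find a directed cycle through $v_\alpha$ whose reversal shrinks this component, append it, and iterate until $v_\alpha$ lies in a finite strong component. At limit stages take unions. The non-trivial point is to ensure that $\mc C=\bigcup_\alpha\mc C_\alpha$ is locally finite: a naive greedy choice could touch a fixed arc infinitely often, so one has to maintain an auxiliary invariant—perhaps freezing each arc after it has participated in a bounded number of reversals, or choosing cycles minimising a weight function tied to some well-order of the arcs—to force each arc to stabilise eventually.

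For the second phase, once every strong component is finite, the problem localises to finite strongly connected digraphs. In the tournament setting (which is where the paper lies), I would try induction on the number of vertices: find a vertex $v$ and a cycle $C$ through $v$ whose reversal either disconnects the strong component or produces a useful structural split, recurse on the resulting pieces, and combine the partial $2$-acyclic partitions into a global $2$-coloring. For the full conjecture beyond tournaments, an additional reduction is probably needed to handle the sparse acyclic ``skeleton'' joining the strong components—perhaps by orienting missing edges so that one reduces to the denser tournament case, which is easier to attack.

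The step I expect to be the main obstacle is the local claim in phase two: showing that every finite strongly connected tournament is cycle-reversion-equivalent to one with dichromatic number at most $2$. Cycle reversions preserve the score sequence and the partition into strong components, so the orbit of a given finite tournament under the reversal action is highly constrained; it is not at all obvious a priori that every such orbit meets the $2$-coverable tournaments. A successful proof will therefore need a genuine structural lemma about finite tournaments—for instance, that any finite tournament can be reversed to one carrying a Hamiltonian path whose back edges are confined to two transitive pieces—rather than any abstract compactness or greedy argument.
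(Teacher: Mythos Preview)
The statement you are addressing is a \emph{conjecture}, not a theorem proved in the paper; the paper establishes it only for tournaments (the Main Theorem plus its Corollary) and explicitly states that the general digraph case remains open, even for countable $D$. So any ``proof proposal'' for the full statement is necessarily attacking an open problem.

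You have also misidentified where the difficulty lies. Your phase two --- the finite case --- is not the obstacle at all: it is already a theorem (Theorem~\ref{thm:charbit}, due to Thomass\'e and Charbit) that every finite digraph admits a reversal sequence lowering the dichromatic number to at most $2$, and the paper cites this as a black box. The genuine content of the paper, and the genuine open problem, is entirely in your phase one: making all strong components finite.

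Your phase-one sketch does not constitute a plan. Saying ``find a cycle whose reversal shrinks the component, iterate, and maintain an invariant to guarantee local finiteness'' names the goal but supplies no mechanism; in particular there is no reason a single cycle reversal should shrink a strong component (it never does in the finite case), and you give no argument that your hypothetical weight function or arc-freezing scheme exists. The paper's actual route for tournaments is structurally quite different from anything you suggest: it first proves that any tournament of size $\kappa$ can be partitioned along a linear order into $\kappa$-uniform pieces with all backward arcs $\kappa$-reversible (via a partition-tree argument using uniform ultrafilters), then analyses $\kappa$-uniform tournaments by splitting into the small-out-degree and small-in-degree parts, and only then reduces to smaller cardinality by induction. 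None of this machinery is hinted at in your proposal, and for general digraphs no analogue is known.
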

 
If we let $\dchr D $ denote the \emph{{\DD dichromatic} number} of $D$, that is, the least number of acyclic subgraphs needed to cover $D$ then the above question becomes the following: can we find $\mc C\in \rs D$ so that $\dchr{\rev{D}{\mc C}}$ is at most 2?  Note that a digraph is acyclic iff there is a linear order on the vertices so that arcs only point backward with respect to this order; these are in some sense structurally simple digraphs. In turn, the {\DD dichromatic} number measures how far we are from such a linear order. For the interested reader, finite digraphs with large {\DD dichromatic} number (and large digirth) can be constructed by various methods \cite{severino, bokal}; for the infinite counterparts, see \cite{dsoukup}. We also outline a very simple construction in Section \ref{sec:finite}.

The above conjecture for \emph{finite digraphs} was already resolved by Thomass\'e \footnote{Personal communication.} and a very elegant proof is presented by P. Charbit as well:
 
 \begin{theorem}\cite{charbit}\label{thm:charbit}
  For any finite digraph $D$, there is some $\mc C\in \rs D$ so that $\dchr{\rev{D}{\mc C}}\leq 2$.
 \end{theorem}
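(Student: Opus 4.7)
The plan is induction on $|V(D)|$. The base case $|V(D)|\le 2$ is immediate, since any such digraph is covered by two singletons. For the inductive step we first reduce to the case where $D$ is strongly connected: since reversing a cycle preserves the strong components of a digraph, if $D$ has strong components $S_1,\ldots,S_k$ with $k\ge 2$, we apply the inductive hypothesis to each $D[S_i]$, concatenate the resulting elements of $\rs{D[S_i]}$ into a single $\mc C\in \rs{D}$, and take the union across $i$ of the two acyclic parts. Any cycle in the resulting partition is contained in a single strong component $S_i$, and hence in the acyclic part it was assigned to, so the union is acyclic as required.

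Assume now that $D$ is strongly connected with $|V(D)|\ge 2$, and pick any vertex $v\in V(D)$. Applying the inductive hypothesis to $D-v$ produces $\mc C'\in \rs{D-v}$ and a partition $V(D)\setminus\{v\} = V_1\sqcup V_2$ with each $\rev{D-v}{\mc C'}[V_i]$ acyclic. Since the cycles of $\mc C'$ avoid $v$, we may view $\mc C'\in \rs{D}$, and in $D':=\rev{D}{\mc C'}$ the sets $V_1$ and $V_2$ still induce acyclic subdigraphs. What remains is to incorporate $v$, possibly after reversing further cycles through $v$.

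Fix topological orders $\tau_i$ on $D'[V_i]$. Any cycle of $D'[V_i\cup\{v\}]$ must pass through $v$ and corresponds to a $V_i$-path from some $x\in N^+_{D'}(v)\cap V_i$ to some $y\in N^-_{D'}(v)\cap V_i$. If no such pair admits such a path for some $i$, we insert $v$ into $\tau_i$ between the latest in-neighbor and the earliest out-neighbor of $v$ in $V_i$, and we are done. Otherwise, we select a shortest cycle $C^*$ through $v$ in, say, $D'[V_1\cup\{v\}]$ and reverse it; the shortness of $C^*$ forces its $V_1$-portion to admit no internal shortcut, which constrains the cycles that can survive or be created by the reversal. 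The main obstacle is the termination argument: reversing $C^*$ destroys the current obstruction through $v$ in $V_1$, but the two arcs at $v$ on $C^*$ flip orientation (possibly producing new obstructions in $V_2\cup\{v\}$), and the reversed $V_1$-portion can combine with other arcs of $V_1$ to yield fresh cycles. To close the argument one must design a potential function — for instance, a lexicographic combination of the shortest cycle lengths through $v$ in $D'[V_i\cup\{v\}]$ for $i\in\{1,2\}$, or a weighted count of the obstructing pairs $(x,y)$ — that strictly decreases with each carefully chosen reversal, guaranteeing completion in finitely many steps.
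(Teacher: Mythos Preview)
Your proposal has a genuine gap that you yourself acknowledge: the termination argument is missing. You write that ``one must design a potential function\ldots that strictly decreases with each carefully chosen reversal,'' but you do not provide one, and the candidates you mention do not obviously work. Reversing a shortest cycle $C^*$ through $v$ in $D'[V_1\cup\{v\}]$ reverses a path $P$ inside the acyclic set $V_1$; if there is another directed path in $D'[V_1]$ between two vertices of $P$ (through vertices off $P$), then $V_1$ itself becomes cyclic after the reversal, destroying the inductive structure entirely. The shortness of $C^*$ only rules out chord shortcuts $v_iv_j$ with $j>i+1$ along $P$, not longer detours through the rest of $V_1$. Even setting this aside, the shortest-cycle length through $v$ can drop after a reversal, and the ``count of obstructing pairs'' is affected in both directions when the arcs $vx$ and $yv$ flip, so neither suggested potential is monotone. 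As written, this is a proof outline that correctly identifies the difficulty but does not resolve it.

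The paper's argument (due to Charbit) sidesteps induction and supplies precisely the global potential you are missing. One places the vertices evenly on a circle of perimeter $1$, assigns each arc its circular arc-length, and calls a directed cycle \emph{good} if it has at least as many forward arcs as backward arcs. Reversing a cycle that is not good decreases $\sigma(D)=\sum_{a\in A}\ell(a)$ by at least $1/n$, so after at most $(n-1)m$ reversals every cycle is good; a separate result (Theorem~\ref{charbitthm}) then shows that any digraph in which every cycle is good has dichromatic number at most $2$. The crucial feature is that $\sigma$ is a potential on the whole arc set, independent of any particular vertex or tentative $2$-colouring, which is exactly what makes termination immediate.
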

 
 {\DD Theorem \ref{thm:charbit} for finite \emph{tournaments} is discussed in detail in a paper of B. Guiduli, A. Gy\'arf\'as, S. Thomass\'e and P. Weidl \cite{gyarfas_rev}, with a focus on reversing 3-cycles\footnote{Acylic sets in the setting of tournaments are often called transitive.}. In part, their motivation comes from looking at tournaments with the same score sequence i.e. sequence of out-degrees; since each vertex has the same out-degree in $D$ and $\rev{D}{\mc C}$ (if $\mc C$ is finite), reversing cycles is a  way of realizing the same score sequence in different digraphs. Indeed, if $D$ and $D'$ have the same score sequence then there is a $\mc C\in \rs D$ so that $D'=\rev{D}{\mc C}$ (a result attributed to H. J. Ryser \cite{ryser} in \cite{gyarfas_rev}). See \cite{busch} for a more recent discussion on the topic of score sequences and dichromatic number in tournaments. 
 
 %In order to keep our paper self-contained, we provide an alternative proof for a strong form of Theorem \ref{thm:charbit} for finite tournaments in Section \ref{sec:finite}.}  The assumption that $D$ is a tournament provides many cycles to work with which simplifies the general case.

 \medskip

  Now, our main goal will be to show the following rather unexpected result:
 
 \begin{mthm}
  If $D$ is a tournament of arbitrary size then there is $\mc C\in \rs D$ so that the strong components of $\rev{D}{\mc C}$ are finite. 
 \end{mthm}
 
  So no matter how random the initial tournament $D$ is, we can transform $D$ into a linear order \emph{modulo finite blocks} by cycle reversions.  Also, note that the strong components of  $\rev{D}{\mc C}$ must be contained in the strong components of $D$ and that a directed cycle must be contained in a strong component.  Hence the algorithm given by the the Main Theorem is not invertible: any further cycle reversion will only effect the finite blocks. {\DD This again is in stark contrast to the finite case where all cycle reversions are reversible, and strong components remain unchanged.
  
  Let us point out that along the proof of the Main Theorem, we uncover several auxilliary results about the structure of infinite tournaments which are of independent interest; we outline these results in Section \ref{sec:outline} as well.}

  Since Theorem \ref{thm:charbit} says that finite digraphs can be made to have {\DD dichromatic} number at most 2 via cycle reversions, we get the following corollary to our Main Theorem:
 
  \begin{cor}
  For any (finite or infinite) tournament $D$, there is some $\mc C\in \rs D$ so that $\dchr{\rev{D}{\mc C}}\leq 2$.
 \end{cor}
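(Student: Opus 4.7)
The plan is to derive the corollary by combining the Main Theorem with Theorem \ref{thm:charbit} in a two-stage reversal: first globally, to chop $D$ into finite strong components, and then locally within each component, to bring its {\DD dichromatic} number down to $2$.

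I would start by applying the Main Theorem to obtain $\mc C_0\in \rs D$ such that $D_0:=\rev{D}{\mc C_0}$ has only finite strong components, say $\{S_i:i\in I\}$. Since cycle reversion preserves the underlying edge set, $D_0$ is still a tournament, and each induced subgraph $D_0[S_i]$ is therefore a finite tournament. Theorem \ref{thm:charbit} then supplies, for each $i\in I$, a (necessarily finite) reversal sequence $\mc C_i\in \rs{D_0[S_i]}$ with $\dchr{\rev{D_0[S_i]}{\mc C_i}}\leq 2$.

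The next step is to paste these local sequences into one global sequence applied to $D_0$. The key observation is that every directed cycle of $D_0$ is contained in a single strong component, and, as noted in the introduction, cycle reversions preserve strong components. Hence reversals carried out inside $S_i$ neither touch arcs with an endpoint outside $S_i$ nor alter the partition $\{S_i:i\in I\}$. After well-ordering $I$, I would interpret each $\mc C_i$ as a sequence of cycles in the evolving full tournament and concatenate them into a single sequence $\mc C_1$. I expect this gluing step, and in particular verifying local finiteness at limit stages, to be the only point that requires a little care; it falls out immediately, however, because each arc of $D_0$ is either a cross-component arc (never touched) or lies in some unique $S_i$ (touched only by the finitely many cycles of $\mc C_i$). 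Thus $\mc C_0\smf \mc C_1\in \rs D$.

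Finally, set $D_1:=\rev{D}{\mc C_0\smf \mc C_1}$. Its strong components are still the $S_i$, and each satisfies $\dchr{D_1[S_i]}\leq 2$. Fixing a 2-coloring $V(S_i)=A_i\cup B_i$ of each component into acyclic sets and letting $A=\bigcup_{i\in I}A_i$, $B=\bigcup_{i\in I}B_i$, any directed cycle in $D_1$ is contained in a single $S_i$ and so cannot lie inside $A$ or $B$. This yields $\dchr{D_1}\leq 2$, completing the deduction. Since the substantive content is already packaged in the Main Theorem, the corollary reduces to this patching argument, whose only leverage is the structural fact that cycle reversions cannot escape strong components.
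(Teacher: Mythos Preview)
Your proposal is correct and follows exactly the approach the paper intends: apply the Main Theorem to obtain finite strong components, then invoke Theorem~\ref{thm:charbit} on each component, and glue. The paper states the corollary as an immediate consequence without spelling out the concatenation and local-finiteness details you supply, so your write-up simply makes explicit what the paper leaves to the reader.
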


This corollary is again quite surprising: if $D$ is an infinite digraph of size $\kappa$ then $D$ might not be covered by $<\kappa$ many acyclic sets i.e. $\dchr{D}=\kappa$ can hold for a digraph or tournament of size $\kappa$ for any infinite $\kappa$ \cite{dsoukup}. Still, after reversing a locally finite sequence of cycles, the {\DD dichromatic} number can be lowered to 2.

\medskip

Our paper is structured as follows: first, in Section \ref{sec:outline}, we outline the plan of proving the Main Theorem and in Section \ref{sec:prelim}, we present some preliminary results which will be applied later multiple times. Next, Sections \ref{sec:parttree}-\ref{sec:unif} contain all the details of proving the Main Theorem. We only use rather basic set theoretic tools and technical complications only arise when dealing with digraphs of singular cardinality. In particular, the case of countably infinite tournaments is perfectly accessible for anyone without practice in infinite combinatorics.

We end our paper with three appendices: in Section \ref{sec:finite}, we prove a strengthening of Theorem \ref{thm:charbit} for finite tournaments. {\DD Furthermore, we will reflect on Charbit's argument and review \cite{gyarfas_rev} to see how many cycles are required to lower the chromatic number and how fast an algorithm can be to carry this out. }
In Section \ref{app:structure}, we show that any reversal sequence (no matter how large) is the composition of edge-disjoint \emph{countable} reversal sequences,  and finally, in Section \ref{app:tri}, we see if an arbitrary cycle reversion is equivalent to reversing a sequence of 3-cycles only.

 \medskip
 Finally, we would like to emphasize that Thomass\'e's conjecture remains unresolved in general, even for countably infinite digraphs. We included further questions and remarks on finite and infinite digraphs at the end of our paper.

 \subsection*{Notations and terminology} In this paper, when we write \emph{digraph}, we always mean an oriented simple graph, that is, one with no loops or parallel arcs.  If $u,v$ are vertices, then we use the word \emph{arc} to refer to the directed edge $uv$, and the word \emph{edge} to refer to the unordered edge $\{u,v\}$.  Let $E(D)$ denote the set of edges of $D$, $A(D)$ denote the set of arcs of $D$, and $d^+(v)$, $d^-(v)$ denote the outdegree and indegree of $v$, respectively. Our paper mostly concerns tournaments i.e. exactly one of $uv$, $vu$ is an arc of $D$ whenever $u\neq v\in V(D)$. 
 
 {\DD If $A,B\subset V(D)$ then let $\arr{AB}$ denote $\{uv\in A(D):u\in A,v\in B\}$. We use $D\uhr A$ to denote the induced subdigraph on $A$. A digraph $D$ is \emph{strongly connected} or just \emph{strong} if any two distinct points are connected by a directed path. The strong components of $D$ are the maximal strong induced subdigraphs of $D$; the strong components form a partition of $D$.
 
 Throughout the paper we will use standard notations and facts from set theory which can be found in \cite{jech}. In particular, we identify a number (or ordinal) $n$ with the set of all smaller ordinals $\{0,1\dots n-1\}$.
 }

 {\DD
 \subsection*{Acknowledgements} We would like to thank C. Laflamme, A. A. Lopez, N. Sauer and R. Woodrow for communicating Thomass\'e's conjecture and for inspiring discussions while the second author was a PIMS postdoctoral fellow at the University of Calgary.
	
	\smallskip
	
	The second named author was supported in part by PIMS and the FWF Grant I1921. }

%  
%  
%  
%  \begin{enumerate}
%  \item $\mc C(\xi)$ is a cycle in the underlying graph of $D$,
%  \item $\mc C$ is locally finite i.e. any edge is only in finitely many $\mc C(\xi)$, and
%   \item $\mc C(\xi)\in \mb C(\rev{D}{\mc C\uhr \xi})$
%  \end{enumerate}
%     for all $\xi<\ell (\mc C)$. We will call the elements of $\rs D$ \emph{reversal sequences or r-sequences} for short.
%     
%     
 
\section{An outline of the main theorem}\label{sec:outline}

 Our proof of the Main Theorem breaks down into three major parts which we outline first before presenting the details. 
 
 We will proceed by induction on the size of the infinite tournament $D$, denoted by $\kappa$ in what follows. So, it suffices to show that strong components of $D$ can be made of size $<\kappa$ by cycle reversion; then the induction applies to these components and we have finite strong components in the end.

       \subsection*{Part I - outline} Let $\mf T_\kappa$ denote the class of tournaments of size $\kappa$. {\DD The first structural result we prove is the following:} if $D\in \mf T_\kappa$ then $V=V(D)$ can be partitioned into pieces $\{V_x:x\in \Gamma\}$ where $(\Gamma,\tri)$ is a linear order so that 
     \begin{enumerate}
      \item $D\uhr V_x$ is \emph{ $\kappa$-uniform} i.e. 
      \[ |(N^{+}(v)\Delta N^{+}(v'))\cap V_x|<\kappa \textmd{  and }|(N^{-}(v)\Delta N^{-}(v'))\cap V_x|<\kappa\] 
      for all $v,v'\in V_x$\footnote{Recall that $A\Delta B$ denotes $A\setm B\sup B\setm A.$}, and 
      \item each arc $uv\in \arr{V_y V_x}$ such that $x\tri y$ is \emph{$\kappa$-reversible} i.e. there are $\kappa$ many paths from $v$ to $u$ which are pairwise edge disjoint.
     \end{enumerate}

     In turn, we will deduce that if $D\in \mf T_\kappa$ then there is $\mc C\in \rs D$ so that strong components of $\rev{D}{\mc C}$ are $\kappa$-uniform.
     
     \medskip
     
     Let $\mf{UT}_\kappa$ denote the class of $\kappa$-uniform tournaments of size $\kappa$.  Part I implies that we can focus solely on $D\in \mf{UT}_\kappa$ instead of arbitrary tournaments.  

            \subsection*{Part II - outline} We deal with a special type of $\kappa$-uniform tournaments first: when all in or all out-degrees are small i.e. less than $\kappa$.     
            
            We show that if all out-degrees are $<\kappa$ in $D$ then after an appropriate sequential cycle reversion $D$ has a well ordered block structure with arcs between the blocks pointing backward. More precisely, there is $\mc C\in \rs D$ so we can write $V(D)$ as $\bigsqcup\{V_\xi:\xi<\mu\}$ where  $|V_\xi|<\kappa$ and  in $\rev{D}{\mc C}$, all arcs between the blocks $V_\zeta$ and $V_\xi$ for $\zeta<\xi$ point into $V_\zeta$. Figure \ref{fig:smalldeg} shows how $\rev{D}{\mc C}$ resembles the simplest tournament with out-degrees $<\kappa$ where the vertices are ordered in type $\kappa$ and all arcs point backward.
            
%\begin{comment}            
            \begin{figure}[H]

            \psscalebox{0.7} % Change this value to rescale the drawing.
{
\begin{pspicture}(0,-1.7045493)(18.003002,1.7045493)
\rput[bl](14.24,-0.88454926){$V_\xi$}
\rput[bl](10.4,-0.8045493){$V_\zeta$}
\rput[bl](17.4,1.1754508){$V$}
\rput{45.51191}(3.7124248,-11.999647){\psarc[linecolor=black, linewidth=0.04, dimen=outer, arrowsize=0.05291667cm 2.0,arrowlength=1.4,arrowinset=0.0]{->}(16.16,-1.5745492){2.61}{0.0}{90.0}}
\rput{43.26387}(1.9352919,-7.029139){\psarc[linecolor=black, linewidth=0.04, dimen=outer, arrowsize=0.05291667cm 2.0,arrowlength=1.4,arrowinset=0.0]{->}(9.83,-1.0745492){1.84}{0.0}{90.0}}
\rput{46.065075}(2.2731495,-10.09586){\psarc[linecolor=black, linewidth=0.04, dimen=outer, arrowsize=0.05291667cm 2.0,arrowlength=1.4,arrowinset=0.0]{->}(13.01,-2.3745492){3.7}{0.0}{90.0}}
\psellipse[linecolor=black, linewidth=0.04, dimen=outer](14.46,0.25545076)(1.9,0.6)
\psellipse[linecolor=black, linewidth=0.04, dimen=outer](10.66,0.25545076)(0.9,0.4)
\psline[linecolor=black, linewidth=0.04, linestyle=dashed, dash=0.17638889cm 0.10583334cm](0.0,0.035450745)(7.2,0.035450745)
\psdots[linecolor=black, dotsize=0.16](1.8,0.035450745)
\psdots[linecolor=black, dotsize=0.16](4.8,0.035450745)
\rput[bl](1.8,-0.56454927){$\zeta$}
\rput[bl](4.8,-0.56454927){$\xi$}
\rput[bl](7.0,0.43545073){$\kappa$}
\rput{43.26387}(-0.07526917,-2.6392994){\psarc[linecolor=black, linewidth=0.04, dimen=outer, arrowsize=0.05291667cm 2.0,arrowlength=1.4,arrowinset=0.0]{->}(3.29,-1.4145492){2.08}{0.0}{90.0}}
\rput{43.546757}(-0.7936283,-2.2822094){\psarc[linecolor=black, linewidth=0.04, dimen=outer, arrowsize=0.05291667cm 2.0,arrowlength=1.4,arrowinset=0.0]{->}(2.46,-2.1345491){3.19}{0.0}{90.0}}
\rput{44.3405}(-0.33817512,-3.4791942){\psarc[linecolor=black, linewidth=0.04, dimen=outer, arrowsize=0.05291667cm 2.0,arrowlength=1.4,arrowinset=0.0]{->}(4.1,-2.1545494){3.27}{0.0}{90.0}}
\rput{-133.69044}(23.013718,11.893101){\psarc[linecolor=black, linewidth=0.04, dimen=outer, arrowsize=0.05291667cm 2.0,arrowlength=1.4,arrowinset=0.0]{->}(14.05,1.0254507){1.12}{0.0}{90.0}}
%\rput[bl](6.8,-0.9045493){$\Delta^+(D)=\kappa$}
\end{pspicture}
}
 \caption{Tournaments with small outdegree}
\label{fig:smalldeg}
                     \end{figure}
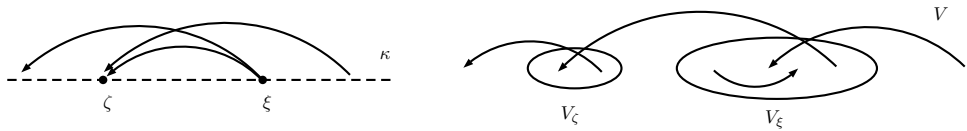
%\end{comment}
           
            Similarly, if all in-degrees are $<\kappa$ in $D$ then we can write $V(D)$ as $\bigsqcup\{V_\xi:\xi<\mu\}$ so that $|V_\xi|<\kappa$ and in $\rev{D}{\mc C}$, all arcs between blocks point forward.

            %We actually get a bit more structure than we need.  That is, the linear order between the blocks in each case will be order isomorphic to the ordinal $\cf(\kappa)$ (the cofinality of $\kappa$).  
            
            We remark that if $\kappa$ is uncountable and regular then we actually don't need to reverse any cycles to construct our blocks. The countable and the singular uncountable cases, however, are more tricky and require cycle reversions. This is our second structural result.
     
                 Ultimately, the strong components of $\rev{D}{\mc C}$ must be contained in these $V_\xi$ blocks in both cases.

%First, observe that if $D$ is a tournament on vertices $\kappa$ with all arcs pointing backward then all out-degrees in $D$ are $<\kappa$. Likewise, if all arcs point forward then all in-degrees are $<\kappa$. We would like to show that essentially all graphs with small out or in degrees look like this, at least after an appropriate cycle reversion.

 \subsection*{Part III - outline}  Now, suppose that $D\in \mf{UT}_\kappa$ is arbitrary.  We show how to make the strong components have size $<\kappa$ building on Part II. 
 
  We first show that $V(D)$ can be partitioned into $V^+\cup V^-$ so that in-degrees are $<\kappa$ in $D\uhr V^+$, out-degrees are $<\kappa$ in $D\uhr V^-$, and either (a) each arc in $\arr{V^-V^+}$ is $\kappa$-reversible, or (b) there is a vertex set $W\subseteq V$ of size $<\kappa$ which meets all arcs in $\arr{V^+V^-}$.
 
 If (a) holds, then using Lemma \ref{lm:kapparev}, we can turn all the arcs in $\arr{V^-V^+}$ without changing anything else and so strong components will be either inside $V^+$ or $V^-$. In $V^+$ or $V^-$, all in or all out-degrees are $<\kappa$ so we can make the strong components have size $<\kappa$ by Part II.

 If (b) holds, we will turn some cycles and modify $V^+$ and $V^-$ a bit further to cover $V$, so that all arcs across the new partition will point in the same direction, while the modified $V^+$ and $V^-$ satisfy the conditions of Part II. Hence, strong components can be made small again. Difficulties only arise really if $\kappa$ is singular.
 
 \section{Further preliminaries}\label{sec:prelim}
 
 Let us present a few simple results which will play a crucial role in proving the Main Theorem. First, note that if $D$ is a digraph and $\mc C\in \rs D$ is \emph{finite} then 
 \begin{enumerate}
  \item if $D$ was not acyclic then $\rev{D}{\mc C}$ is not acyclic either, and
  \item $d^+_D(v)=d^+_{\rev{D}{\mc C}}(v)$ and  $d^-_D(v)=d^-_{\rev{D}{\mc C}}(v)$.
 \end{enumerate}

 The infinite case is completely different and, in particular, both of the above observations fail in general.
 
 \begin{obs}\label{widget1}
  Suppose that $uv\in A$ and there are infinitely many edge-disjoint paths from $v$ to $u$ in $D$. Then there is $\mc C\in \rs D$ so that $A(\rev{D}{\mc C})=(A(D)\setm \{uv\})\cup \{vu\}$.
 \end{obs}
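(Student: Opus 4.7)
The plan is to construct $\mc C$ in countably many stages using the hypothesis of infinitely many edge-disjoint paths. Fix an enumeration $\la P_n : n<\oo\ra$ of pairwise edge-disjoint directed paths from $v$ to $u$; no $P_n$ uses the arc $uv$ itself, since $uv$ is directed from $u$ to $v$ and the $P_n$ go from $v$ to $u$.

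At stage $0$, let $C_0$ be the directed cycle formed by concatenating $P_0$ with the arc $uv$. Reversing $C_0$ flips $uv$ to $vu$ and reverses every arc of $P_0$, producing a directed path $\larr{P_0}$ from $u$ to $v$; all other arcs are untouched. At stage $n+1$ (for $n<\oo$), the inductive hypothesis will be that the current digraph contains both $P_{n+1}$ (still oriented $v\to u$ and untouched so far) and $\larr{P_n}$ (a $u\to v$ path created at the previous stage), and that these are edge-disjoint. Their concatenation $W_{n+1}=P_{n+1}\smf \larr{P_n}$ is then a closed directed walk using each of its arcs exactly once, so it decomposes into finitely many edge-disjoint directed cycles $C_{n+1}^{(1)},\dots,C_{n+1}^{(k_{n+1})}$ by the standard procedure of extracting a cycle whenever the walk first revisits a vertex. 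Reversing these cycles in order flips every arc of $W_{n+1}$; this restores $P_n$ to its original orientation and produces $\larr{P_{n+1}}$, so the inductive hypothesis is maintained.

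Let $\mc C$ be the concatenation, in order, of $C_0$ together with the cycles produced at each stage $n+1$ for $n<\oo$. The arc $uv$ appears only in $C_0$; each arc of $P_n$ appears only among the cycles from stages $n$ and $n+1$, hence in only finitely many cycles; every other arc of $D$ never appears. Thus $\mc C$ is locally finite and so $\mc C\in \rs D$. In the limit, the arc $uv$ stabilizes as $vu$, each arc of each $P_n$ is flipped exactly twice and so returns to its original orientation, and every remaining arc is untouched. Therefore $A(\rev{D}{\mc C})=(A(D)\setm\{uv\})\cup\{vu\}$, as required.

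The main technical point will be making the inductive bookkeeping rigorous---verifying that at the start of stage $n+1$ the path $\larr{P_n}$ is still present and that $P_{n+1}$ has not been disturbed by any earlier stage. This is routine from the edge-disjointness of the $P_k$'s, but should be formalized by a short induction tracking exactly which arcs are flipped at each stage. The other minor item is the standard decomposition of a closed directed walk with no repeated arc into edge-disjoint directed cycles, which follows from Euler-type reasoning applied to the balanced sub-multidigraph carried by $W_{n+1}$.
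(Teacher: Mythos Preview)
Your proof is correct and follows the same idea as the paper's: at stage $n+1$, combine the reversed previous path $\larr{P_n}$ with the fresh path $P_{n+1}$ to obtain a closed object and reverse it, so that $P_n$ is restored and $P_{n+1}$ becomes reversed. The paper simply writes $C_{n+1}=\larr{P_n}P_{n+1}$ as a single directed cycle, which tacitly assumes the $P_n$ are internally vertex-disjoint; the hypothesis only gives edge-disjointness, so your extra step of decomposing the closed Eulerian walk $W_{n+1}$ into finitely many edge-disjoint directed cycles is a genuine (if minor) technical improvement over the paper's argument rather than a deviation from it.
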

In other words, we can reverse $uv$ by cycle reversions without changing any other arc. We use the following notation: if $P$ is a sequence of vertices $v_0 v_1\dots v_k$ (e.g. a path) then let $\larr P$ denote $v_k v_{k-1}\dots v_0$. So if $C\in \mb C(D)$, $u\neq v\in V(C)$ and $P=uCv$ {\DD (i.e. $P$ is the unique subpath of $C$ from $u$ to $v$)} then $\larr P$ is a path in $\rev{D}{C}$.

\begin{proof}
 Let $\{P_n:n\in \oo\}$ be edge-disjoint paths from $v$ to $u$ and define $C_n$ as follows: $C_0=uvP_0$, $C_1=\larr P_0 P_1$, $C_2=\larr P_1 P_2$\dots and $C_{n+1}=\larr P_n P_{n+1}$ in general (see Figure \ref{fig:widget1}). It is clear that $\mc C=\la C_n\ra_{n\in \oo}$ is as desired.

 \begin{figure}[H]
  \psscalebox{0.7} % Change this value to rescale the drawing.
{
\begin{pspicture}(0,-5.6367674)(20.24,5.6367674)
\psdots[linecolor=black, dotsize=0.1](18.82,2.3632324)
\psdots[linecolor=black, dotsize=0.1](18.42,2.3632324)
\psdots[linecolor=black, dotsize=0.1](18.02,2.3632324)
\psbezier[linecolor=black, linewidth=0.04, arrowsize=0.05291667cm 2.0,arrowlength=1.4,arrowinset=0.0]{->}(10.78,0.8232324)(11.78,0.8232324)(16.83925,1.4385543)(17.22,2.363232421875)(17.60075,3.2879105)(12.36,4.6832323)(11.22,4.3232327)
\psbezier[linecolor=black, linewidth=0.04, linestyle=dashed, dash=0.17638889cm 0.10583334cm, arrowsize=0.05291667cm 2.0,arrowlength=1.4,arrowinset=0.0]{->}(10.76,0.9432324)(11.76,1.1432325)(14.823884,1.3826517)(15.02,2.363232421875)(15.216116,3.3438132)(11.74,4.263232)(10.74,3.8632324)
\psbezier[linecolor=black, linewidth=0.04, linestyle=dashed, dash=0.17638889cm 0.10583334cm, arrowsize=0.05291667cm 2.0,arrowlength=1.4,arrowinset=0.38]{<-}(10.7,1.0832324)(11.7,1.2832325)(12.22,1.5632324)(12.42,2.163232421875)(12.62,2.7632325)(11.74,3.5432324)(10.74,3.5432324)
\psdots[linecolor=black, dotsize=0.2](10.42,3.5632324)
\psdots[linecolor=black, dotsize=0.2](10.42,0.9632324)
\psline[linecolor=black, linewidth=0.04, arrowsize=0.05291667cm 2.0,arrowlength=1.4,arrowinset=0.0]{->}(10.42,0.9632324)(10.42,3.3632324)
\psdots[linecolor=black, dotsize=0.2](0.82,0.9632324)
\psline[linecolor=black, linewidth=0.04, linestyle=dashed, dash=0.17638889cm 0.10583334cm, arrowsize=0.05291667cm 2.0,arrowlength=1.4,arrowinset=0.0]{<-}(0.82,1.1632324)(0.82,3.3632324)
\psdots[linecolor=black, dotsize=0.2](0.82,3.5632324)
\psbezier[linecolor=black, linewidth=0.04, linestyle=dashed, dash=0.17638889cm 0.10583334cm, arrowsize=0.05291667cm 2.0,arrowlength=1.4,arrowinset=0.38]{->}(1.22,1.1632324)(2.02,1.3477285)(2.66,1.7167208)(2.82,2.2702091660610466)(2.98,2.8236976)(2.276,3.5432324)(1.476,3.5432324)
\psbezier[linecolor=black, linewidth=0.04, arrowsize=0.05291667cm 2.0,arrowlength=1.4,arrowinset=0.0]{->}(1.24,1.0432324)(2.24,1.2432324)(5.2238836,1.3826517)(5.42,2.363232421875)(5.616116,3.3438132)(2.14,4.263232)(1.14,3.8632324)
\psbezier[linecolor=black, linewidth=0.04, arrowsize=0.05291667cm 2.0,arrowlength=1.4,arrowinset=0.0]{->}(1.2,0.8832324)(2.2,0.8832324)(7.23925,1.4385543)(7.62,2.363232421875)(8.00075,3.2879105)(2.76,4.6832323)(1.62,4.3232327)
\rput[bl](3.64,5.2432323){\Large{$D$}}
\psdots[linecolor=black, dotsize=0.1](8.42,2.3632324)
\psdots[linecolor=black, dotsize=0.1](8.82,2.3632324)
\psdots[linecolor=black, dotsize=0.1](9.22,2.3632324)
\rput[bl](13.42,5.3632326){\Large{$\rev{D}{C_0}$}}
\rput[bl](1.82,-1.0367676){\Large{$\rev{\rev{D}{C_0}}{C_1}$}}
\psdots[linecolor=black, dotsize=0.1](9.22,-4.0367675)
\psdots[linecolor=black, dotsize=0.1](8.82,-4.0367675)
\psdots[linecolor=black, dotsize=0.1](8.42,-4.0367675)
\psbezier[linecolor=black, linewidth=0.04, linestyle=dashed, dash=0.17638889cm 0.10583334cm, arrowsize=0.05291667cm 2.0,arrowlength=1.4,arrowinset=0.0]{->}(1.14,-5.5367675)(2.14,-5.5367675)(7.23925,-4.961446)(7.62,-4.036767578125)(8.00075,-3.1120894)(2.76,-1.7167675)(1.62,-2.0767677)
\psbezier[linecolor=black, linewidth=0.04, linestyle=dashed, dash=0.17638889cm 0.10583334cm, arrowsize=0.05291667cm 2.0,arrowlength=1.4,arrowinset=0.0]{<-}(1.1,-5.4167676)(2.1,-5.216768)(5.2238836,-5.0173483)(5.42,-4.036767578125)(5.616116,-3.056187)(2.14,-2.1367676)(1.14,-2.5367675)
\psbezier[linecolor=black, linewidth=0.04, arrowsize=0.05291667cm 2.0,arrowlength=1.4,arrowinset=0.38]{->}(1.08,-5.2767677)(2.08,-5.0767674)(2.62,-4.8367677)(2.82,-4.236767578125)(3.02,-3.6367676)(2.14,-2.8567677)(1.14,-2.8567677)
\psdots[linecolor=black, dotsize=0.2](0.82,-2.8367677)
\psline[linecolor=black, linewidth=0.04, arrowsize=0.05291667cm 2.0,arrowlength=1.4,arrowinset=0.0]{->}(0.82,-5.4367676)(0.82,-3.0367675)
\psdots[linecolor=black, dotsize=0.2](0.82,-5.4367676)
\psdots[linecolor=black, dotsize=0.1](19.02,-4.0367675)
\psdots[linecolor=black, dotsize=0.1](18.62,-4.0367675)
\psdots[linecolor=black, dotsize=0.1](18.22,-4.0367675)
\psbezier[linecolor=black, linewidth=0.04, linestyle=dashed, dash=0.17638889cm 0.10583334cm, arrowsize=0.05291667cm 2.0,arrowlength=1.4,arrowinset=0.0]{<-}(10.9,-5.6167674)(11.9,-5.6167674)(17.03925,-4.961446)(17.42,-4.036767578125)(17.80075,-3.1120894)(12.56,-1.7167675)(11.42,-2.0767677)
\psbezier[linecolor=black, linewidth=0.04, arrowsize=0.05291667cm 2.0,arrowlength=1.4,arrowinset=0.0]{->}(10.98,-5.4967675)(11.98,-5.2967677)(15.023884,-5.0173483)(15.22,-4.036767578125)(15.416116,-3.056187)(11.94,-2.1367676)(10.94,-2.5367675)
\psbezier[linecolor=black, linewidth=0.04, arrowsize=0.05291667cm 2.0,arrowlength=1.4,arrowinset=0.38]{->}(10.96,-5.3767676)(11.96,-5.1767673)(12.42,-4.8367677)(12.62,-4.236767578125)(12.82,-3.6367676)(11.94,-2.8567677)(10.94,-2.8567677)
\psdots[linecolor=black, dotsize=0.2](10.62,-2.8367677)
\psline[linecolor=black, linewidth=0.04, arrowsize=0.05291667cm 2.0,arrowlength=1.4,arrowinset=0.0]{->}(10.62,-5.4367676)(10.62,-3.0367675)
\psdots[linecolor=black, dotsize=0.2](10.62,-5.4367676)
\rput[bl](11.62,-1.0367676){\Large{$\rev{\rev{\rev{D}{C_0}}{C_1}}{C_2}$}}
\rput[bl](0.02,3.6432323){\Large{$u$}}
\rput[bl](0.02,1.0032325){\Large{$v$}}
\rput[bl](0.0,-2.7967675){\Large{$u$}}
\rput[bl](0.0,-5.4367676){\Large{$v$}}
\end{pspicture}
}\caption{The dashed paths mark the next cycle to be reversed.}
\label{fig:widget1}
 \end{figure}
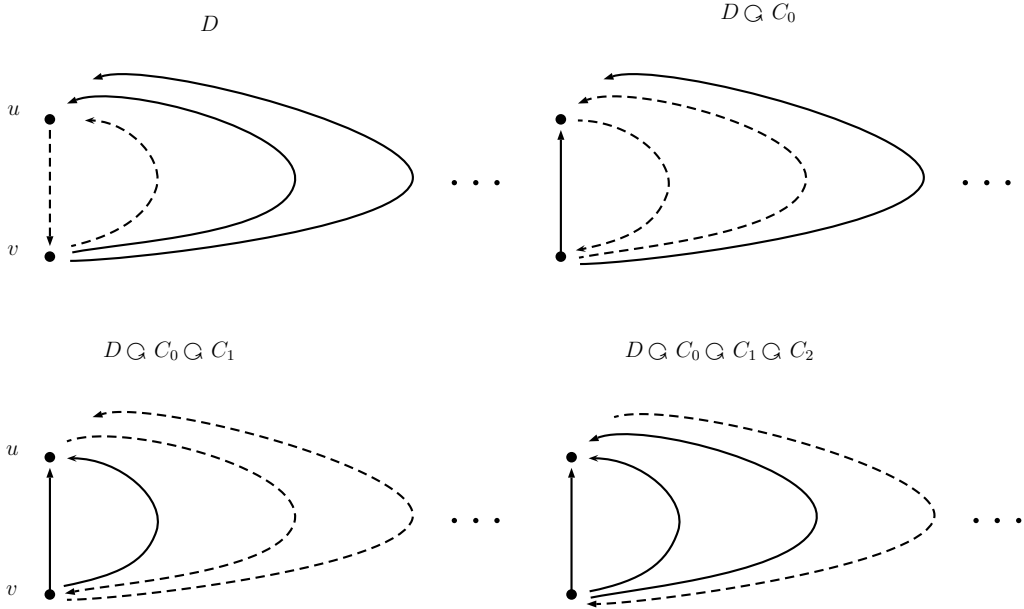

\end{proof}

The above observation motivates the next definition.

\begin{definition}
 An arc $uv$ in a digraph $D$ is \emph{$\kappa$-reversible} iff there are $\kappa$ many paths from $v$ to $u$ which are pairwise edge disjoint.
\end{definition}

It is easy to see that one can simultaneously reverse $\kappa$ many $\kappa$-reversible arcs:

\begin{lemma}\label{lm:kapparev}
 Suppose that $D$ is a digraph and $F$ is a set of at most $\kappa$ many $\kappa$-reversible arcs (where $\kappa$ is an infinite cardinal). Then there is $\mc C\in \rs D$ so that  $A(\rev{D}{\mc C})=(A(D)\setm F)\cup \{vu:uv\in F\}$.
\end{lemma}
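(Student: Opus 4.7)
The plan is to apply Observation \ref{widget1} once per arc in $F$, choosing paths carefully so the combined sequence of cycles is locally finite. Enumerate $F=\{u_\alpha v_\alpha : \alpha<\lambda\}$ with $\lambda\leq \kappa$. By transfinite induction on $\alpha<\lambda$, I pick families $\{P^\alpha_n:n<\omega\}$ of pairwise edge-disjoint paths from $v_\alpha$ to $u_\alpha$ in $D$ with two key properties: \emph{(i)} the entire collection $\{P^\alpha_n:\alpha<\lambda,n<\omega\}$ is pairwise edge-disjoint across all pairs $(\alpha,n)$, and \emph{(ii)} no $P^\alpha_n$ contains any arc of $F$. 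Given such families, set $C^\alpha_0=u_\alpha v_\alpha\smf P^\alpha_0$ and $C^\alpha_n=\larr{P^\alpha_{n-1}}\smf P^\alpha_n$ for $n\geq 1$, exactly as in the observation, and let $\mc C$ be their concatenation $\la C^\alpha_n:\alpha<\lambda,n<\omega\ra$ in lexicographic order.

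Conditions (i) and (ii) give both correctness and local finiteness. By (ii) no edge of $F$ is touched by any cycle other than its own $C^\alpha_0$, so when we arrive at stage $\alpha$ the only reversed arcs from $F$ are $\{u_\beta v_\beta:\beta<\alpha\}$, and each $P^\alpha_n$ remains a valid directed path in the current digraph $\rev{D}{\mc C\uhr\alpha}$. The analysis from the proof of Observation \ref{widget1} then applies verbatim to stage $\alpha$, reversing exactly $u_\alpha v_\alpha$. For local finiteness, (i) ensures every non-$F$ edge lies in at most one $P^\alpha_n$, hence in at most the two cycles $C^\alpha_n$ and $C^\alpha_{n+1}$; while (ii) ensures each arc $u_\alpha v_\alpha\in F$ appears only in $C^\alpha_0$.

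The crux is constructing these families inductively. At stage $\alpha$, since $u_\alpha v_\alpha$ is $\kappa$-reversible in $D$, there are $\kappa$ pairwise edge-disjoint paths from $v_\alpha$ to $u_\alpha$ available. Each forbidden edge (either one already used in some $P^\beta_m$ with $\beta<\alpha$, of which there are $\leq |\alpha|\cdot\omega$ total, or an arc of $F\setm\{u_\alpha v_\alpha\}$, of which there are $\leq |F|\leq\kappa$) kills at most one of these $\kappa$ paths by edge-disjointness within the family. When $\kappa>\omega$ is regular and $|F|<\kappa$ the forbidden set has size $<\kappa$, so $\kappa$ paths survive and any $\omega$ of them work.

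The main obstacle is the boundary case $|F|=\kappa$, where $F$ itself could kill all $\kappa$ paths. I would handle this by relaxing (ii): at stage $\alpha$, it suffices that $P^\alpha_n$ avoid only the set $\{u_\beta v_\beta:\beta<\alpha\}$ of already-reversed arcs (which are absent from the current digraph), since any $u_\gamma v_\gamma$ with $\gamma>\alpha$ appearing on $P^\alpha_n$ is flipped twice by the cycles $C^\alpha_n$ and $C^\alpha_{n+1}$, returning to its original orientation before stage $\gamma$ begins; this contributes at most three cycles to such an edge, still finite. The forbidden count then drops to $|\alpha|\cdot\omega+|\alpha|<\kappa$ for every $\alpha<\kappa$ whenever $\kappa>\omega$, handling all uncountable cardinals uniformly. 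The remaining countable case $\kappa=\omega$ (where $|\alpha|\cdot\omega=\omega$) requires a more delicate simultaneous back-and-forth construction of all $\lambda$ path families in tandem, picking one path per stage at each step so that the total committed edge set grows only finitely between consecutive selections, thereby guaranteeing global local finiteness of $\mc C$.
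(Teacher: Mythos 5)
Your proposal is essentially the paper's own proof: the paper likewise lists $F$ as $\{u_\xi v_\xi:\xi<\kappa\}$, chooses for each arc a countable family of paths so that the whole collection $\{P^\xi_n:\xi<\kappa,\,n<\omega\}$ is pairwise edge-disjoint, runs the widget of Observation \ref{widget1} arc by arc, and obtains local finiteness from the fact that an arc of $F$ can lie on at most one of the chosen paths. The extra bookkeeping you supply --- requiring the stage-$\alpha$ paths to avoid the already-reversed arcs $\{u_\beta v_\beta:\beta<\alpha\}$ so that they are still directed paths when their widget is executed, the cardinality count for uncountable $\kappa$, and the dovetailed one-path-at-a-time selection when $\kappa=\omega$ --- is sound and simply makes explicit the choices the paper performs without comment.
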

%\begin{proof} 
%We can construct $\mc C=\la C_\xi\ra_{\xi<\kappa}$ by an induction as follows: list $F$ as $\{u_\xi v_\xi:\xi<\kappa\}$ and make sure that $$A(\rev{D}{\la C_\xi\ra_{\xi< \zeta}})=(A(D)\setm \{u_\xi v_\xi:\xi<\zeta\})\cup \{v_\xi u_\xi:\xi<\zeta\}.$$

%Given $\la C_\xi\ra_{\xi< \zeta}$, we can find $C_\zeta,C_{\zeta+1}\dots C_{\zeta+n}\dots $ for $n<\oo$ all edge disjoint from $\la C_\xi\ra_{\xi< \zeta}$ so that reversing $\la C_{\zeta+n}\ra_{n<\oo}$ will reverse exactly $u_{\zeta+n} v_{\zeta+n}$ for $n<\oo$. This is done by applying the trick in Observation \ref{widget1} and that the arcs $u_{\zeta+n} v_{\zeta+n}$  are $\kappa$-reversible.
%\end{proof}

\begin{proof}
List $F$ as $\{u_\xi v_\xi:\xi<\kappa\}$.  For each $\xi<\kappa$ and each $n<\omega$, choose a path $P_\xi^n$ from $v_\xi$ to $u_\xi$ in such a way so that the set of all these paths is pairwise edge disjoint.  Then let $\mc C_{\xi}$ be the reversal sequence given by Observation \ref{widget1} with respect to the arc $u_\xi v_\xi$ and the paths $\{ P_\xi^n \mid n<\omega \}$.  While $u_\xi v_\xi$ may occur in some $P_{\xi'}^n$, it can only appear in one such path, so $\la C_{\xi}\ra_{\xi < \kappa}$ is locally finite, and thus reverses exactly the arcs in $F$. 
\end{proof}

 \begin{cor}
  If $D$ is a digraph of size $\kappa$ and each arc of $D$ is $\kappa$-reversible then for any other digraph $D^*$ on the same vertex and edge set there is a $\mc C\in \rs D$ so that $D^*=\rev{D}{\mc C}$.
 \end{cor}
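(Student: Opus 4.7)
The plan is to reduce this directly to Lemma \ref{lm:kapparev}. Given $D$ and $D^*$ sharing the same vertex and edge set, define
\[ F = \{uv \in A(D) : uv \notin A(D^*)\} = \{uv \in A(D) : vu \in A(D^*)\}, \]
the set of arcs in $D$ whose orientation disagrees with $D^*$. This is a well-defined subset of $A(D)$ precisely because $D$ and $D^*$ share the same underlying edge set, so on each edge $\{u,v\}$ exactly one orientation lies in $A(D)$ and exactly one in $A(D^*)$.

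Next I would check the size bound: since $|V(D)| = \kappa$ is infinite, the total number of arcs satisfies $|A(D)| \leq \kappa^2 = \kappa$, and hence $|F| \leq \kappa$. By hypothesis every arc of $D$ is $\kappa$-reversible, so in particular every arc in $F$ is $\kappa$-reversible.

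Now apply Lemma \ref{lm:kapparev} to the set $F$: it yields a reversal sequence $\mc C \in \rs{D}$ with
\[ A(\rev{D}{\mc C}) = (A(D) \setminus F) \cup \{vu : uv \in F\}. \]
By the definition of $F$, the right-hand side is exactly $A(D^*)$, so $\rev{D}{\mc C} = D^*$, completing the proof.

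There is no real obstacle here; the only content is recognizing that a digraph on $\kappa$ vertices has at most $\kappa$ arcs, so the hypothesis of Lemma \ref{lm:kapparev} is met by $F$. This corollary is essentially just a packaging of the previous lemma.
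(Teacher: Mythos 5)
Your proof is correct and is exactly the intended argument: the paper states this as an immediate consequence of Lemma \ref{lm:kapparev}, obtained by applying that lemma to the set $F$ of arcs on which $D$ and $D^*$ disagree, using $|A(D)|\leq\kappa$ for an infinite $\kappa$. Nothing further is needed.
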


 In particular, we can make $D$ acyclic by cycle reversions if each arc is $\kappa$-reversible. This is the case for a good number of natural examples e.g. the random digraph for $\kappa=\aleph_0$, and for various uncountable digraphs with large {\DD dichromatic} number which are defined by forcing or using strong colourings \cite{dsoukup}.
 
  \medskip
 
 Now, we proceed by a similar example to Observation \ref{widget1} but we will reverse the arcs along an infinite path instead of a single arc:
 
 \begin{obs}\label{widget2}
  Suppose that $v_0 v_1 \dots$ is  a one-way infinite directed path in $D$ i.e. $v_i v_{i+1}\in A(D)$ for all $i<\oo$, and $N^-(v_0)\cap \{v_n:n<\oo\}$ is infinite. Then there is a $\mc C\in \rs D$ so that $A(\rev{D}{\mc C})=(A(D)\setm \{v_i v_{i+1}:i<\oo\})\cup \{v_{i+1} v_{i}:i<\oo\}$.
 \end{obs}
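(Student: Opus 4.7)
The plan is to adapt the construction of Observation \ref{widget1}, using the back-arcs into $v_0$ from the given infinite path in place of the edge-disjoint paths $P_n$ used there. Let $n_0 < n_1 < \dots$ enumerate in increasing order those indices $n \geq 1$ with $v_n v_0 \in A(D)$; by hypothesis there are infinitely many.

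First I would reverse the cycle $C_0 = v_0 v_1 \cdots v_{n_0} v_0$. This flips each forward arc $v_i v_{i+1}$ with $i < n_0$ and turns the back-arc $v_{n_0} v_0$ into $v_0 v_{n_0}$. In the resulting digraph, the forward segment $v_{n_0} v_{n_0+1} \cdots v_{n_1}$ is still present in its original orientation, so together with the newly created arc $v_0 v_{n_0}$ and the untouched back-arc $v_{n_1} v_0$ it forms a directed cycle $C_1 = v_0 v_{n_0} v_{n_0+1} \cdots v_{n_1} v_0$. Continuing inductively, I would define $C_k = v_0 v_{n_{k-1}} v_{n_{k-1}+1} \cdots v_{n_k} v_0$ for $k \geq 1$; the inductive invariant to carry is that, at the start of stage $k$, the arc $v_0 v_{n_{k-1}}$ has just been created while the forward arcs $v_i v_{i+1}$ with $i \geq n_{k-1}$ and the back-arcs $v_{n_j} v_0$ with $j \geq k$ are still untouched. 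This invariant guarantees that $C_k$ is a genuine directed cycle in the current digraph.

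The main bookkeeping is to verify that $\mathcal C = \la C_k : k<\omega\ra$ is locally finite and has the desired net effect. Each forward edge $\{v_i, v_{i+1}\}$ appears in exactly one cycle of the sequence (in $C_0$ if $i < n_0$, and otherwise in the unique $C_k$ with $n_{k-1} \leq i < n_k$), and so is reversed once in total. Each edge $\{v_0, v_{n_k}\}$ appears in exactly two cycles, namely $C_k$ (which flips $v_{n_k} v_0$ to $v_0 v_{n_k}$) and $C_{k+1}$ (which flips it back), so the back-arc $v_{n_k} v_0$ is undisturbed in the limit. No other edge of $D$ appears in any $C_k$, so $\mathcal C \in \rs D$ and $\rev{D}{\mathcal C}$ has precisely the prescribed arc set.

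The hardest part, as I see it, is simply keeping the indexing coherent so that each $C_k$ is available as a directed cycle at the correct stage; the invariant above does exactly this, and the rest is a routine verification entirely in the spirit of Observation \ref{widget1}.
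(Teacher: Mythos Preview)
Your construction is exactly the one the paper uses: enumerate the back-arcs into $v_0$ as $n_0<n_1<\dots$ and reverse the cycles $C_0=v_0v_1\cdots v_{n_0}$ and $C_{k}=v_0v_{n_{k-1}}v_{n_{k-1}+1}\cdots v_{n_k}$ for $k\geq 1$. The paper simply declares the result ``clear,'' whereas you spell out the local-finiteness and net-effect bookkeeping, but the argument is identical.
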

 
 %\begin{comment}
 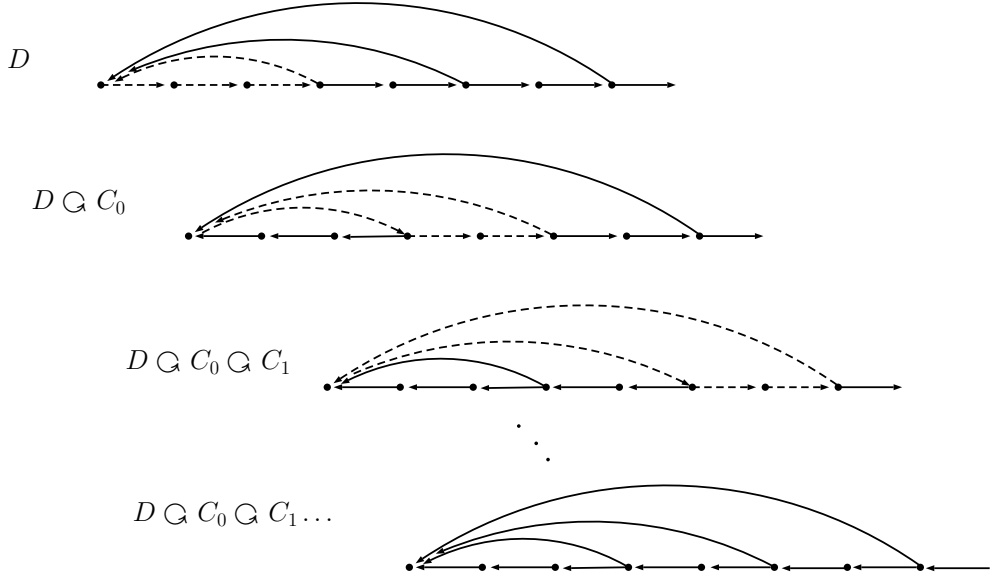
\begin{figure}[H]
  \psscalebox{0.6} % Change this value to rescale the drawing.
{
\begin{pspicture}(0,-6.5789785)(21.54,6.5789785)
\rput{22.33402}(-0.82607305,-2.555646){\psarc[linecolor=black, linewidth=0.04, dimen=outer, arrowsize=0.05291667cm 2.0,arrowlength=1.4,arrowinset=0.0]{->}(6.06,-3.370132){8.53}{39.982376}{91.207924}}
\rput{54.82714}(0.27652952,-7.8334346){\psarc[linecolor=black, linewidth=0.04, dimen=outer, arrowsize=0.05291667cm 2.0,arrowlength=1.4,arrowinset=0.0]{->}(7.69,-3.650132){9.62}{0.0}{70.0}}
\rput{29.463062}(0.69166416,-2.210832){\psarc[linecolor=black, linewidth=0.04, linestyle=dashed, dash=0.17638889cm 0.10583334cm, dimen=outer, arrowsize=0.05291667cm 2.0,arrowlength=1.4,arrowinset=0.0]{->}(4.55,0.20986801){4.58}{30.977716}{89.12375}}
\psline[linecolor=black, linewidth=0.04, arrowsize=0.05291667cm 2.0,arrowlength=1.4,arrowinset=0.0]{->}(13.26,4.1598682)(14.66,4.1598682)
\psdots[linecolor=black, dotsize=0.16](13.26,4.1598682)
\psline[linecolor=black, linewidth=0.04, arrowsize=0.05291667cm 2.0,arrowlength=1.4,arrowinset=0.0]{->}(11.66,4.1598682)(13.06,4.1598682)
\psdots[linecolor=black, dotsize=0.16](11.66,4.1598682)
\psline[linecolor=black, linewidth=0.04, arrowsize=0.05291667cm 2.0,arrowlength=1.4,arrowinset=0.0]{->}(10.06,4.1598682)(11.46,4.1598682)
\psdots[linecolor=black, dotsize=0.16](10.06,4.1598682)
\psline[linecolor=black, linewidth=0.04, arrowsize=0.05291667cm 2.0,arrowlength=1.4,arrowinset=0.0]{->}(8.46,4.1598682)(9.86,4.1598682)
\psdots[linecolor=black, dotsize=0.16](8.46,4.1598682)
\psline[linecolor=black, linewidth=0.04, arrowsize=0.05291667cm 2.0,arrowlength=1.4,arrowinset=0.0]{->}(6.86,4.1598682)(8.26,4.1598682)
\psdots[linecolor=black, dotsize=0.16](6.86,4.1598682)
\psline[linecolor=black, linewidth=0.04, linestyle=dashed, dash=0.17638889cm 0.10583334cm, arrowsize=0.05291667cm 2.0,arrowlength=1.4,arrowinset=0.0]{->}(5.26,4.1598682)(6.66,4.1598682)
\psdots[linecolor=black, dotsize=0.16](5.26,4.1598682)
\psline[linecolor=black, linewidth=0.04, linestyle=dashed, dash=0.17638889cm 0.10583334cm, arrowsize=0.05291667cm 2.0,arrowlength=1.4,arrowinset=0.0]{->}(3.66,4.1598682)(5.06,4.1598682)
\psdots[linecolor=black, dotsize=0.16](3.66,4.1598682)
\psline[linecolor=black, linewidth=0.04, linestyle=dashed, dash=0.17638889cm 0.10583334cm, arrowsize=0.05291667cm 2.0,arrowlength=1.4,arrowinset=0.0]{->}(2.06,4.1598682)(3.46,4.1598682)
\psdots[linecolor=black, dotsize=0.16](2.06,4.1598682)
\rput{22.33402}(-1.9520112,-3.5320091){\psarc[linecolor=black, linewidth=0.04, linestyle=dashed, dash=0.17638889cm 0.10583334cm, dimen=outer, arrowsize=0.05291667cm 2.0,arrowlength=1.4,arrowinset=0.0]{->}(7.97,-6.710132){8.54}{39.982376}{91.207924}}
\rput{54.82714}(-1.6396528,-10.818886){\psarc[linecolor=black, linewidth=0.04, dimen=outer, arrowsize=0.05291667cm 2.0,arrowlength=1.4,arrowinset=0.0]{->}(9.61,-6.990132){9.62}{0.0}{70.0}}
\rput{29.463062}(-0.7028482,-3.58716){\psarc[linecolor=black, linewidth=0.04, linestyle=dashed, dash=0.17638889cm 0.10583334cm, dimen=outer, arrowsize=0.05291667cm 2.0,arrowlength=1.4,arrowinset=0.0]{<-}(6.47,-3.130132){4.58}{30.977716}{89.12375}}
\psline[linecolor=black, linewidth=0.04, arrowsize=0.05291667cm 2.0,arrowlength=1.4,arrowinset=0.0]{->}(15.18,0.819868)(16.58,0.819868)
\psdots[linecolor=black, dotsize=0.16](15.18,0.819868)
\psline[linecolor=black, linewidth=0.04, arrowsize=0.05291667cm 2.0,arrowlength=1.4,arrowinset=0.0]{->}(13.58,0.819868)(14.98,0.819868)
\psdots[linecolor=black, dotsize=0.16](13.58,0.819868)
\psline[linecolor=black, linewidth=0.04, arrowsize=0.05291667cm 2.0,arrowlength=1.4,arrowinset=0.0]{->}(11.98,0.819868)(13.38,0.819868)
\psdots[linecolor=black, dotsize=0.16](11.98,0.819868)
\psline[linecolor=black, linewidth=0.04, linestyle=dashed, dash=0.17638889cm 0.10583334cm, arrowsize=0.05291667cm 2.0,arrowlength=1.4,arrowinset=0.0]{->}(10.38,0.819868)(11.78,0.819868)
\psdots[linecolor=black, dotsize=0.16](10.38,0.819868)
\psline[linecolor=black, linewidth=0.04, linestyle=dashed, dash=0.17638889cm 0.10583334cm, arrowsize=0.05291667cm 2.0,arrowlength=1.4,arrowinset=0.0]{->}(8.78,0.819868)(10.18,0.819868)
\psdots[linecolor=black, dotsize=0.16](8.78,0.819868)
\psline[linecolor=black, linewidth=0.04, arrowsize=0.05291667cm 2.0,arrowlength=1.4,arrowinset=0.0]{<-}(7.34,0.799868)(8.72,0.819868)
\psdots[linecolor=black, dotsize=0.16](7.18,0.819868)
\psline[linecolor=black, linewidth=0.04, arrowsize=0.05291667cm 2.0,arrowlength=1.4,arrowinset=0.0]{<-}(5.76,0.819868)(7.16,0.819868)
\psdots[linecolor=black, dotsize=0.16](5.58,0.819868)
\psline[linecolor=black, linewidth=0.04, arrowsize=0.05291667cm 2.0,arrowlength=1.4,arrowinset=0.0]{<-}(4.12,0.819868)(5.52,0.819868)
\psdots[linecolor=black, dotsize=0.16](3.98,0.819868)
\psline[linecolor=black, linewidth=0.04, arrowsize=0.05291667cm 2.0,arrowlength=1.4,arrowinset=0.0]{->}(18.22,-2.520132)(19.62,-2.520132)
\rput[bl](0.0,4.539868){\huge{$D$}}
\rput[bl](0.54,1.259868){\huge{$\rev{D}{C_0}$}}
\psdots[linecolor=black, dotsize=0.08](11.22,-3.380132)
\rput[bl](2.6,-2.280132){\huge{$\rev{D}{C_0}\rev{}{C_1}$}}
\rput{22.33402}(-2.9931815,-4.9377785){\psarc[linecolor=black, linewidth=0.04, linestyle=dashed, dash=0.17638889cm 0.10583334cm, dimen=outer, arrowsize=0.05291667cm 2.0,arrowlength=1.4,arrowinset=0.0]{<-}(11.01,-10.050132){8.54}{39.982376}{91.207924}}
\rput{54.82714}(-3.0810058,-14.719845){\psarc[linecolor=black, linewidth=0.04, linestyle=dashed, dash=0.17638889cm 0.10583334cm, dimen=outer, arrowsize=0.05291667cm 2.0,arrowlength=1.4,arrowinset=0.0]{->}(12.65,-10.330132){9.62}{0.0}{70.0}}
\rput{29.463062}(-1.9525143,-5.5143743){\psarc[linecolor=black, linewidth=0.04, dimen=outer, arrowsize=0.05291667cm 2.0,arrowlength=1.4,arrowinset=0.0]{->}(9.51,-6.470132){4.58}{30.977716}{89.12375}}
\psdots[linecolor=black, dotsize=0.16](18.22,-2.520132)
\psline[linecolor=black, linewidth=0.04, linestyle=dashed, dash=0.17638889cm 0.10583334cm, arrowsize=0.05291667cm 2.0,arrowlength=1.4,arrowinset=0.0]{->}(16.62,-2.520132)(18.02,-2.520132)
\psdots[linecolor=black, dotsize=0.16](16.62,-2.520132)
\psline[linecolor=black, linewidth=0.04, linestyle=dashed, dash=0.17638889cm 0.10583334cm, arrowsize=0.05291667cm 2.0,arrowlength=1.4,arrowinset=0.0]{->}(15.02,-2.520132)(16.42,-2.520132)
\psdots[linecolor=black, dotsize=0.16](15.02,-2.520132)
\psline[linecolor=black, linewidth=0.04, arrowsize=0.05291667cm 2.0,arrowlength=1.4,arrowinset=0.0]{<-}(13.62,-2.520132)(14.94,-2.520132)
\psdots[linecolor=black, dotsize=0.16](13.42,-2.520132)
\psline[linecolor=black, linewidth=0.04, arrowsize=0.05291667cm 2.0,arrowlength=1.4,arrowinset=0.0]{<-}(11.96,-2.520132)(13.38,-2.520132)
\psdots[linecolor=black, dotsize=0.16](11.82,-2.520132)
\psline[linecolor=black, linewidth=0.04, arrowsize=0.05291667cm 2.0,arrowlength=1.4,arrowinset=0.0]{<-}(10.38,-2.540132)(11.76,-2.520132)
\psdots[linecolor=black, dotsize=0.16](10.22,-2.520132)
\psline[linecolor=black, linewidth=0.04, arrowsize=0.05291667cm 2.0,arrowlength=1.4,arrowinset=0.0]{<-}(8.8,-2.520132)(10.2,-2.520132)
\psdots[linecolor=black, dotsize=0.16](8.62,-2.520132)
\psline[linecolor=black, linewidth=0.04, arrowsize=0.05291667cm 2.0,arrowlength=1.4,arrowinset=0.0]{<-}(7.16,-2.520132)(8.56,-2.520132)
\psdots[linecolor=black, dotsize=0.16](7.02,-2.520132)
\rput[bl](2.76,-5.640132){\huge{$\rev{D}{C_0}\rev{}{C_1}\dots$}}
\rput{22.33402}(-4.370575,-5.9203506){\psarc[linecolor=black, linewidth=0.04, dimen=outer, arrowsize=0.05291667cm 2.0,arrowlength=1.4,arrowinset=0.0]{->}(12.81,-14.030132){8.54}{39.982376}{91.207924}}
\rput{54.82714}(-5.5712104,-17.878538){\psarc[linecolor=black, linewidth=0.04, dimen=outer, arrowsize=0.05291667cm 2.0,arrowlength=1.4,arrowinset=0.0]{->}(14.45,-14.310132){9.62}{0.0}{70.0}}
\rput{29.463062}(-3.677338,-6.914448){\psarc[linecolor=black, linewidth=0.04, dimen=outer, arrowsize=0.05291667cm 2.0,arrowlength=1.4,arrowinset=0.0]{->}(11.31,-10.450132){4.58}{30.977716}{89.12375}}
\psdots[linecolor=black, dotsize=0.16](20.02,-6.500132)
\psline[linecolor=black, linewidth=0.04, arrowsize=0.05291667cm 2.0,arrowlength=1.4,arrowinset=0.0]{<-}(18.54,-6.500132)(19.94,-6.500132)
\psline[linecolor=black, linewidth=0.04, arrowsize=0.05291667cm 2.0,arrowlength=1.4,arrowinset=0.0]{<-}(16.98,-6.520132)(18.38,-6.520132)
\psdots[linecolor=black, dotsize=0.16](16.82,-6.500132)
\psline[linecolor=black, linewidth=0.04, arrowsize=0.05291667cm 2.0,arrowlength=1.4,arrowinset=0.0]{<-}(15.42,-6.500132)(16.74,-6.500132)
\psdots[linecolor=black, dotsize=0.16](15.22,-6.500132)
\psline[linecolor=black, linewidth=0.04, arrowsize=0.05291667cm 2.0,arrowlength=1.4,arrowinset=0.0]{<-}(13.76,-6.500132)(15.18,-6.500132)
\psdots[linecolor=black, dotsize=0.16](13.62,-6.500132)
\psline[linecolor=black, linewidth=0.04, arrowsize=0.05291667cm 2.0,arrowlength=1.4,arrowinset=0.0]{<-}(12.18,-6.520132)(13.56,-6.500132)
\psdots[linecolor=black, dotsize=0.16](12.02,-6.500132)
\psline[linecolor=black, linewidth=0.04, arrowsize=0.05291667cm 2.0,arrowlength=1.4,arrowinset=0.0]{<-}(10.6,-6.500132)(12.0,-6.500132)
\psdots[linecolor=black, dotsize=0.16](10.42,-6.500132)
\psline[linecolor=black, linewidth=0.04, arrowsize=0.05291667cm 2.0,arrowlength=1.4,arrowinset=0.0]{<-}(8.96,-6.500132)(10.36,-6.500132)
\psdots[linecolor=black, dotsize=0.16](8.82,-6.500132)
\psdots[linecolor=black, dotsize=0.08](11.6,-3.760132)
\psdots[linecolor=black, dotsize=0.08](11.86,-4.120132)
\psdots[linecolor=black, dotsize=0.16](18.42,-6.500132)
\psline[linecolor=black, linewidth=0.04, arrowsize=0.05291667cm 2.0,arrowlength=1.4,arrowinset=0.0]{<-}(20.14,-6.520132)(21.54,-6.520132)
\end{pspicture}
}\caption{The dashed arcs mark the next cycle to be reversed.}
\label{fig:widget2}
 \end{figure}
%\end{comment}

 \begin{proof}
  Let $\{{n_k}:k<\oo\}$ be an increasing enumeration of $\{n<\oo:v_n\in N^-(v_0)\}$. Define $C_0=v_{0}v_{1}\dots v_{n_0}$, $C_1=v_0 v_{n_0} v_{n_0+1}\dots v_{n_{1}}$, and $C_{k+1}=v_0 v_{n_k} v_{n_k+1}\dots v_{n_{k+1}}$ in general (see Figure \ref{fig:widget2}). It is clear that $\mc C=\la C_k\ra_{k<\oo}\in \rs D$ is as required. 
  
 \end{proof}

 Both Observation \ref{widget1} and \ref{widget2} will play an important role later in the proof of the Main Theorem.

 \section{Part I: partition trees and uniformity}\label{sec:parttree}
 
 Our first goal is to show that one can make the strong components  of a tournament $D\in \mf T_\kappa$ \emph{uniform} in some sense by cycle reversions:

 \begin{theorem}\label{thm:parttree}Suppose $D\in \mf T_\kappa$. Then $V=V(D)$ can be partitioned into pieces $\{V_x:x\in \Gamma\}$ where $(\Gamma,\tri)$ is a linear order so that 
     \begin{enumerate}
      \item $D\uhr V_x$ is \emph{ $\kappa$-uniform} i.e. \[ |(N^{+}(v)\Delta N^{+}(v'))\cap V_x|<\kappa \textmd{  and }|(N^{-}(v)\Delta N^{-}(v'))\cap V_x|<\kappa\]  for all $v,v'\in V_x$, and 
      \item each arc $uv\in \arr{V_y V_x}$ such that $x\tri y$ is \emph{$\kappa$-reversible}.% i.e. there are $\kappa$ many path from $v$ to $u$ which are pairwise edge disjoint.
     \end{enumerate}
   \end{theorem}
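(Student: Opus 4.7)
The plan is to construct the partition $\{V_x : x\in\Gamma\}$ together with the linear order $(\Gamma,\tri)$ via a transfinite recursion that builds a ``partition tree'' whose leaves are the pieces $V_x$ and whose left-to-right traversal induces the order $\tri$. This matches the title of the section.

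At each step of the recursion I would examine the current piece $W$ of the tree and distinguish two cases. In the \emph{split case}, $W$ contains a nontrivial subset $A\subsetneq W$ such that the number of arcs from $A$ to $W\setminus A$ is $<\kappa$ (or the symmetric one-sided cut condition holds); I then split $W$ into children $W\setminus A$ and $A$, placing the former earlier and the latter later in the tree order, and recurse on each child. In the \emph{uniform case}, $W$ admits no such one-sided small cut, and I would introduce the equivalence relation
\[
v\equiv_W v' \iff |(N^+(v)\Delta N^+(v'))\cap W|<\kappa \text{ and } |(N^-(v)\Delta N^-(v'))\cap W|<\kappa;
\]
since $\kappa+\kappa=\kappa$ for any infinite cardinal, this is a genuine equivalence relation, and its classes are $\kappa$-uniform inside $W$ by construction. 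These classes become the leaves of the tree under $W$, with an arbitrary internal linear order.

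Condition (1) of the theorem then holds at every leaf by definition of $\equiv_W$ (or trivially for leaves of size $<\kappa$). For condition (2), backward arcs produced by a split step are $\kappa$-reversible by (the infinite version of) Menger's theorem: since $D$ is a tournament, $|A|\cdot|W\setminus A|$ equals the total number of edges between $A$ and $W\setminus A$, so a $<\kappa$-cut in one direction forces $\kappa$ arcs in the other direction, and one extracts the required $\kappa$ edge-disjoint forward paths from this abundance together with the uniformity of the lower leaves. Backward arcs sitting inside a uniform piece $W$ are $\kappa$-reversible because the absence of a one-sided small cut in $W$ gives, again by Menger, $\kappa$ edge-disjoint forward paths between any two vertices of $W$, which we can restrict to the relevant pair of leaves by the $\equiv_W$-agreement of $W$-neighborhoods.

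The main obstacle is proving that the recursion terminates at genuinely $\kappa$-uniform leaves — concretely, that once a piece $W$ admits no one-sided small cut, the $\equiv_W$-classes really do witness the required uniform structure. This will require a contrapositive argument: from a pair of vertices with large symmetric-difference neighborhood one must extract a one-sided small cut, which is a nontrivial structural step. Singular cardinals $\kappa$ pose additional difficulties, since combining $<\kappa$-many edge-disjoint path systems into a single $\kappa$-system must be carried out without exceeding cardinal bounds; this is presumably where the paper's remark about ``singular cardinalities'' being the technical heart of the proof will be felt.
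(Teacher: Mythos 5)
Your split rule is where the argument breaks. You split $W$ whenever some nonempty $A\subsetneq W$ sends fewer than $\kappa$ arcs to $W\setm A$, put $A$ on the later side, and claim the resulting backward arcs are $\kappa$-reversible because the tournament condition forces $\kappa$ arcs in the opposite direction. Having $\kappa$ arcs from $W\setm A$ to $A$ does not produce even one directed path from a fixed $v\in W\setm A$ back to a fixed $u\in A$, let alone $\kappa$ edge-disjoint ones. Concretely, let $D$ be the transitive tournament on $\kappa$ with arcs pointing from larger to smaller ordinals, except that the single arc between $0$ and $1$ is turned around, so $01\in A(D)$. Taking $W=V$ and $A=\{0\}$, exactly one arc leaves $A$, so your split applies and $01$ becomes a backward arc; but $1$ has no out-neighbours at all, so $01$ is not $\kappa$-reversible (indeed not reversible), and no appeal to Menger or to ``uniformity of the lower leaves'' can repair this. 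The phrase ``one extracts the required $\kappa$ edge-disjoint forward paths from this abundance'' is precisely the missing step, and it is false for this splitting criterion. (Your uniform case is essentially fine: if no one-sided $<\kappa$ cut exists in $W$, a greedy/cut argument does give $\kappa$ edge-disjoint paths between any two vertices of $W$, and the $\equiv_W$-classes are $\kappa$-uniform; the ``contrapositive'' step you flag as the main obstacle, and the singular-cardinal worries, are not actually where the difficulty of this theorem lies.)

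The paper splits differently, and this is the idea you are missing: a piece $V_z$ is split only when it fails $\kappa$-uniformity, witnessed by $v,v'$ with $|N^+(v)\cap N^-(v')\cap V_z|=\kappa$. One fixes a $\kappa$-uniform ultrafilter $\mc U$ on $V_z$ containing this set and puts $V_{z\smf 0}=\{u:N^+(u)\cap V_z\in\mc U\}$ before $V_{z\smf 1}=\{u:N^-(u)\cap V_z\in\mc U\}$. Then for any $w\in V_{z\smf 0}$ and $w'\in V_{z\smf 1}$ the set $N^+(w)\cap N^-(w')\cap V_z$ lies in $\mc U$, hence has size $\kappa$, so every backward arc $w'w$ is $\kappa$-reversible via $\kappa$ many length-two paths — the reversibility is built into the split rather than hoped for afterwards. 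Termination is then handled not by an explicit recursion but by Zorn's lemma: partition trees (with the lexicographic order and the reversibility condition as part of the definition) are closed under unions of chains after adding limit nodes, a maximal one exists, and its terminal pieces must be $\kappa$-uniform since otherwise the ultrafilter split would extend it. If you want to salvage your approach, you would have to replace the cut-based split by something of this kind; as written, the proposal does not prove condition (2).
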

\begin{proof}
 
   Fix $D\in \mf T_{\kappa}$. We say that $\la V_x \ra_{x\in T}$ is a \emph{partition tree} if
     
     \begin{enumerate}
      \item $T$ is a downward closed subtree of sequences of ordinals,
      \item $x\perp y$ implies $V_x\cap V_y=\emptyset$,
      \item $x\sqsubset y$ (i.e. $x$ is an initial segment of $y$) implies $V_y\subset V_x$,
      \item\label{c:part} $V=\bigcup\{V_x:x\in T$ is terminal$\}$, and 
      \item any arc $a\in \arr{V_y V_x}$ so that $x\lex y$ is $\kappa$-reversible.
      
     \end{enumerate}
Note that if $V_{\emptyset}=V$ then $\la V_{\emptyset} \ra$ is a partition tree.
We say that a partition tree $\la V_x \ra_{x\in T}$  \emph{extends} $\la V'_y \ra_{y\in S}$ if $T$ is an end extension of $S$, and for all $y\in S$, $V_y=V'_y$. 

{\DD Our first lemma explains why we made this definition.} %\todo{I changed the order of the two lemmas here.}

\begin{lemma}
If  $\la V_x \ra_{x\in T}$ is a maximal partition tree then $V_x$ is $\kappa$-uniform if $x$ is terminal in $T$.
    \end{lemma}
    
    We need the following definition: a \emph{$\kappa$-uniform ultrafilter} $\mc U$ on a set $I$ (of size $\kappa$) is a set of subsets of $I$ so that the following holds 
    \begin{enumerate}[(i)]
     \item $|W|=\kappa$ for all $W\in \mc U$,
     \item {\DD $I\in \mc U$ and $\mc U$ is closed under taking supersets, finite unions and finite intersections,}
     \item if $I=W\cup W'$ then $W\in \mc U$ or $W'\in \mc U$.
    \end{enumerate} The best to think of $\mc U$ as a measure of largeness among subsets of $I$. {\DD Recall that given $I$ and $W\subset I$ of size $\kappa$, there is a $\kappa$-uniform ultrafilter $\mc U$ on $I$ with $W\in \mc U$.}
    
    \begin{proof}
     Suppose that $V_z$ is not $\kappa$-uniform for some terminal $z\in T$ and so without loss of generality, we can suppose that $(N^+(v)\setm N^+(v'))\cap V_z$ has size $\kappa$ for some $v,v'\in V_z$.  Since $D\uhr V_z$ is a tournament, $(N^+(v)\setm N^+(v'))=(N^+(v)\cap N^-(v'))$. Take a $\kappa$-uniform ultrafilter $\mc U$ on $V_z$ containing $(N^+(v)\cap N^-(v'))\cap V_z$. Let $V_{z\smf 0}=\{u\in V_z:N^+(u)\cap V_z\in \mc U\}$ and $V_{z\smf 1}=\{u\in V_z:N^-(u)\cap V_z\in \mc U\}$. Note that $v\in V_{z\smf 0}$, $v'\in V_{z\smf 1}$ and $V_z=V_{z\smf 0}\dot\cup V_{z\smf 1}$. Figure \ref{fig:parttree} below shows how the block $V_z$ can be split to separate $v$ and $v'$.\begin{figure}[H]%    use [hb] only if necceccary!

   \centering
  \psscalebox{0.7 0.7} % Change this value to rescale the drawing.
{
\begin{pspicture}(0,-5.17)(13.000001,3.17)
\definecolor{colour0}{rgb}{0.0,0.0,0.0}
\psframe[linecolor=black, linewidth=0.04, dimen=outer](2.0000007,3.13)(0.0,2.33)
\psframe[linecolor=black, linewidth=0.04, dimen=outer](4.0000005,3.13)(2.2000005,2.33)
\psframe[linecolor=black, linewidth=0.04, dimen=outer](9.000001,3.13)(7.200001,2.33)
\psframe[linecolor=black, linewidth=0.04, dimen=outer](5.0000005,3.13)(4.200001,2.33)
\psframe[linecolor=black, linewidth=0.04, dimen=outer](7.0000005,3.13)(5.200001,2.33)
\psline[linecolor=black, linewidth=0.04](4.200001,-2.67)(1.0000006,2.13)
\psline[linecolor=black, linewidth=0.04](4.200001,-2.67)(8.000001,2.13)
\psline[linecolor=black, linewidth=0.04](4.6000004,2.13)(6.6000004,0.33)
\psline[linecolor=black, linewidth=0.04](5.5600004,1.29)(6.200001,2.13)
\psline[linecolor=black, linewidth=0.04](2.2000005,0.33)(3.2000005,2.13)
\psdots[linecolor=black, dotsize=0.1](1.0000006,2.13)
\psdots[linecolor=black, dotsize=0.1](3.2000005,2.13)
\psdots[linecolor=black, dotsize=0.1](4.6000004,2.13)
\psdots[linecolor=black, dotsize=0.1](6.200001,2.13)
\psdots[linecolor=black, dotsize=0.1](8.000001,2.13)
\psline[linecolor=black, linewidth=0.04](4.200001,-2.67)(5.0000005,-3.87)
\psframe[linecolor=black, linewidth=0.04, dimen=outer](9.000001,-0.67)(0.0,-1.47)
\psline[linecolor=black, linewidth=0.04, linestyle=dashed, dash=0.17638889cm 0.10583334cm](4.0600004,-0.35)(4.3400006,-0.6001448)(3.9400005,-0.9687793)(4.2800007,-1.1794276)(3.9400005,-1.4954)(4.2800007,-1.73)
\psdots[linecolor=black, dotsize=0.2](2.5200007,2.65)
\psdots[linecolor=black, dotsize=0.2](7.640001,2.61)
\psbezier[linecolor=colour0, linewidth=0.04, arrowsize=0.05291667cm 2.0,arrowlength=1.4,arrowinset=0.0]{<-}(2.525636,2.7615988)(2.58411,4.024308)(7.7128396,3.9611106)(7.6543655,2.6984012343225547)
\psdots[linecolor=black, dotsize=0.2](7.640001,-1.11)
\psellipse[linecolor=black, linewidth=0.04, linestyle=dashed, dash=0.17638889cm 0.10583334cm, dimen=outer](8.68,-3.53)(2.26,0.64)
\psline[linecolor=colour0, linewidth=0.04, linestyle=dashed, dash=0.17638889cm 0.10583334cm, arrowsize=0.05291667cm 2.0,arrowlength=1.4,arrowinset=0.0]{->}(2.5200007,-1.15)(7.140001,-3.55)
\psline[linecolor=colour0, linewidth=0.04, linestyle=dashed, dash=0.17638889cm 0.10583334cm, arrowsize=0.05291667cm 2.0,arrowlength=1.4,arrowinset=0.0]{->}(7.180001,-3.55)(7.6000004,-1.25)
\psdots[linecolor=black, dotsize=0.2](2.5000007,-1.13)
\rput[bl](3.0000007,2.63){$w$}
\rput[bl](7.9200006,2.73){$w'$}
\rput[bl](9.9200006,-1){\huge{$V_z$}}
\rput[bl](1.7600006,-1.11){$w$}
\rput[bl](8.22,-1.05){$w'$}
\rput[bl](7.22,0.25){\huge{$V_{z\smf 1}$}}
\rput[bl](0.52,0.25){\huge{$V_{z\smf 0}$}}
\rput[bl](7.5200005,-3.7){$N^+(w)\cap N^-(w')\in \mc U$}
\rput[bl](2.0600007,3.87){$V_x$}
\rput[bl](7.7200007,3.77){$V_y$}
\end{pspicture}
}
 \caption{The partition tree with a backward arc}
\label{fig:parttree}

\end{figure}
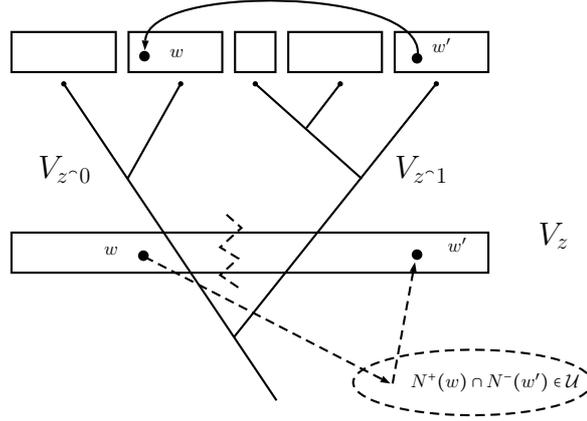
     
     Then  $S=T\cup\{z\smf 0,z\smf 1\}$ is an end extension of $T$ and we claim that $\la V_x\ra_{x\in S}$ is a partition tree properly extending $\la V_x \ra_{x\in T}$.  Indeed, every arc from {\DD $V_{z\smf 1}$ to $V_{z\smf 0}$} is $\kappa$-reversible:  Let $w' \in V_{z\smf 1}$ and $w \in V_{z\smf 0}$.  Then $(N^+(w)\cap N^-(w'))\cap V_z \in \mc U$, and so there are $\kappa$ many disjoint paths {\DD(of length 2)} from $w$ to $w'$.
     
     However, this contradicts the maximality of $\la V_x \ra_{x\in T}$.
    \end{proof}

In turn, the terminal nodes of a maximal partition tree give the desired decomposition, with $\tri$ defined by the lexicographic order of the corresponding tree.

{\DD Thus we only need to show that there are maximal partition trees. To this end, note that Zorn's lemma can be applied immediately once we proved the following.}

%and if $x\in T$ is terminal and $y\in S$ extends $x$ then $V_y$ is a proper subset of $V_x$.

\begin{lemma}
 Any increasing chain of partition trees has an upper bound.
\end{lemma}
  \begin{proof}
   Given the increasing chain $\la V_x \ra_{x\in T_i}$ of partition trees with $i\in I$, we can first form $T^*=\bigcup\{T_i:i\in I\}$. $\la V_x \ra_{x\in T^*}$ satisfies each condition above except (\ref{c:part}). So, for each unbounded chain $b\subseteq T^*$ so that $\bigcap \{V_x:x\in b\}\neq \emptyset$ we add {\DD $\cup b$} to $T^*$ with $V_{\cup b}=\bigcap \{V_x:x\in b\}$. This defines a tree $T$ together with the sets  $\la V_x \ra_{x\in T}$. All conditions except (\ref{c:part}) are clearly satisfied.
   
   Lets check that (\ref{c:part}) holds for $\la V_x \ra_{x\in T}$. Given $v\in V$, we can find $x_i\in T_i$ terminal so that $v\in V_{x_i}$. In turn, $v\in \bigcap\{V_{x_i}:i\in I\}=V_{\cup b}$ for the chain $b$ generated by $\{x_i:i\in I\}\subseteq T^*$.

  \end{proof}
  %\begin{comment}

%\end{comment}

  This finishes the proof of the theorem.

\end{proof}

\begin{cor}\label{cor:first}
 If $D\in \mf T_\kappa$ then there is $\mc C\in \rs D$ so that strong components of $\rev{D}{\mc C}$ are $\kappa$-uniform.
\end{cor}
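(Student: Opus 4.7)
The plan is to combine Theorem \ref{thm:parttree} with Lemma \ref{lm:kapparev} in the most direct way possible. First, apply Theorem \ref{thm:parttree} to $D$ to obtain a linear order $(\Gamma,\tri)$ and a partition $V(D)=\bigsqcup\{V_x:x\in \Gamma\}$ so that each $D\uhr V_x$ is $\kappa$-uniform, while every arc in $\arr{V_y V_x}$ with $x\tri y$ is $\kappa$-reversible. Call such an arc \emph{backward}.

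Next, let $F\subseteq A(D)$ denote the set of all backward arcs. Since $D$ has $\kappa$ vertices and hence at most $\kappa$ arcs (as $\kappa$ is infinite), we have $|F|\leq \kappa$. Each element of $F$ is $\kappa$-reversible by the conclusion of Theorem \ref{thm:parttree}, so Lemma \ref{lm:kapparev} furnishes some $\mc C\in \rs D$ for which
\[
A(\rev{D}{\mc C})=(A(D)\setm F)\cup \{vu:uv\in F\}.
\]
In other words, exactly the backward arcs are flipped, and all other arcs of $D$, in particular all arcs inside any single block $V_x$, remain intact.

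Now observe that in $\rev{D}{\mc C}$ every cross-block arc points \emph{forward} with respect to $\tri$: if $u\in V_y$, $v\in V_x$ and $x\tri y$, then the arc between $u$ and $v$ originally points either as $vu$ (already forward) or as $uv\in F$ (reversed to $vu$). Consequently, no directed cycle of $\rev{D}{\mc C}$ can cross block boundaries, and each strong component of $\rev{D}{\mc C}$ is contained in some $V_x$. Since the arcs inside $V_x$ are unchanged, $D\uhr V_x=\rev{D}{\mc C}\uhr V_x$ is $\kappa$-uniform. Finally, $\kappa$-uniformity is inherited by any induced subdigraph: for $S\subseteq V_x$ and $v,v'\in S$,
\[
|(N^+(v)\Delta N^+(v'))\cap S|\leq |(N^+(v)\Delta N^+(v'))\cap V_x|<\kappa,
\]
and similarly for in-neighborhoods. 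Therefore every strong component of $\rev{D}{\mc C}$ is $\kappa$-uniform, as required.

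There is no substantial obstacle beyond checking that these three facts line up; the only point to be careful about is that Lemma \ref{lm:kapparev} only affects the arcs in $F$ and leaves the interior of each $V_x$ untouched, which is exactly what guarantees both that cross-block arcs are unidirectional afterwards and that uniformity inside the blocks survives the cycle reversal.
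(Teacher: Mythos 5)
Your argument is correct and follows the paper's own proof exactly: apply Theorem \ref{thm:parttree}, collect the backward cross-block arcs into a set $F$ of size at most $\kappa$, reverse exactly $F$ via Lemma \ref{lm:kapparev}, and conclude that strong components lie inside the $\kappa$-uniform blocks. The extra observations you include (that the interior of each block is untouched and that $\kappa$-uniformity passes to induced subdigraphs) are just details the paper leaves implicit.
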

\begin{proof}
 Suppose that $\{V_x:x\in \Gamma\}$ is as in Theorem \ref{thm:parttree}. Let $F=\{uv:uv\in \arr{V_y V_x}$ such that $x\tri y\}$. Each arc in $F$ is $\kappa$-reversible and $|F|\leq \kappa$ so Lemma \ref{lm:kapparev} applies: there is $\mc C\in \rs D$ so that reversing $\mc C$ reverses exactly the arcs in $F$. Now, each strong component of $\rev{D}{\mc C}$ must be contained in some $V_x$, and each such set is $\kappa$-uniform.
\end{proof}

 \section{Part II: tournaments with small in-or out-degrees}

 In this section, we wish to better understand  a restricted class of $\kappa$-uniform tournaments: we suppose that either all in-degrees or all out-degrees are less than $\kappa$. {\DD Let $\Delta^+(D)$ denote the minimal cardinal $\nu$ so that $d^+(v)<\nu$ for all $v\in V(D)$, and define $\Delta^-(D)$ similarly.} So, we write $\Delta^+(D)\leq \kappa$ iff all out-degrees in $D$ are $<\kappa$ and  $\Delta^-(D)\leq \kappa$ iff all in-degrees in $D$ are $<\kappa$.
 
 A canonical example of a tournament with $\Delta^+(D)\leq \kappa$ is defined on the ordinal $\kappa$ with arcs $\{\xi\zeta:\zeta<\xi<\kappa\}$. Now, if each vertex $\xi$ is blown up into an arbitrary tournament of size $<\kappa$ then the resulting tournament still satisfies $\Delta^+(D)\leq \kappa$. {\DD Our second structural result says  that,} after appropriate cycle reversions, any tournament $D$ with $\Delta^+(D)\leq \kappa$ looks like this.
 
 \medskip
 
 We start with a lemma which will also play a key role in later sections. The lemma says that if we have a small set of vertices $W$ with (say) all out-degrees small then $W$ can be enlarged to a still small $W^*$ such that, after reversing a small number of cycles, all arcs point into $W^*$ from $V\setm W^*$.
 
 \begin{lemma}\label{lm:degreecontroll}
  Suppose that $D\in \mf T_\kappa$ and $W\in [V]^{<\lambda}$ for some $\aleph_0\leq \lambda\leq \kappa$ with $\cf(\lambda)=\lambda$. Also, fix a set $F$ of $<\lambda$ many edges.
  
  If $d^+(v)<\lambda$ for all $v\in W$ then there is $\mc C\in \rs D$ and $W\subseteq W^*$ so that $|W^*|<\lambda$ and all arcs point into $W^*$ from $V\setm W^*$ in $\rev{D}{\mc C}$. 
  
   If $d^-(v)<\lambda$ for all $v\in W$ then there is $\mc C\in \rs D$ and $W\subseteq W^*$ so that $|W^*|<\lambda$ and all arcs point from $W^*$ to $V\setm W^*$ in $\rev{D}{\mc C}$. 
   
   Furthermore, if $\lambda$ is uncountable then we can also ensure that $|\mc C|<\lambda$ and $E(\mc C)\cap F=\emptyset$.
 \end{lemma}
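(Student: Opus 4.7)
The two statements are dual under the operation that reverses every arc of $D$, which swaps the in- and out-degree hypotheses and interchanges the two conclusions; so it suffices to address the first. The plan is a transfinite recursion building an increasing chain $W = W_0 \subseteq W_1 \subseteq \cdots \subseteq W_\alpha \subseteq \cdots$ of subsets of $V$ with $|W_\alpha| < \lambda$ and a compatible chain of reversal sequences $\mc C_\alpha \in \rs D$; the output is $W^* = \bigcup_\alpha W_\alpha$ together with $\mc C = \bigcup_\alpha \mc C_\alpha$. The size bound $|\mc C| < \lambda$ and $E(\mc C) \cap F = \emptyset$ will be enforced at every stage in the uncountable case, while in the countable case we instead enforce local finiteness of $\mc C$.

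At stage $\alpha$, set $D_\alpha = \rev{D}{\mc C_\alpha}$ and call an arc $uv'$ of $D_\alpha$ an \emph{escape arc} if $u \in W_\alpha$ and $v' \in V \setm W_\alpha$.  The primary move is to absorb the head of every escape arc into $W_{\alpha+1}$; this keeps $|W_{\alpha+1}|<\lambda$ as long as each $u \in W_\alpha$ has fewer than $\lambda$ escape arcs in $D_\alpha$, since then regularity of $\lambda$ closes off the total.  For $u \in W_0$ this is exactly the given hypothesis $d^+(u) < \lambda$.  A later-absorbed $u \in W_\alpha \setm W_0$ may however have $\geq \lambda$ escape arcs in $D_\alpha$; such a ``bad'' vertex is first repaired by cycle reversion.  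Invoking Observation~\ref{widget1}, I reverse each excess escape arc $uv'$ via a cycle $u \to v' \to \cdots \to u$ whose non-$uv'$ edges live in $W_\alpha \cup \{v'\}$, the return $v'\to u$ path being supplied by the tournament structure together with the ancestral absorbed material: recall that $u$ was absorbed as an out-neighbor of some earlier $v^* \in W_\beta$, and tracing back through the chain of absorptions gives many intermediate candidate vertices.  At limits I take unions, preserving all bounds by regularity of $\lambda$.

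The hard part will be producing these short $v' \to u$ return paths uniformly: for each excess escape arc $uv'$ one needs a directed cycle in $D_\alpha$ supported in $W_\alpha \cup \{v'\}$ whose edges are disjoint from $F$ and from those already consumed by $\mc C_\alpha$.  I expect to resolve this by a depth-based induction on when $u$ was absorbed (the deeper into the construction, the richer the supply of intermediate vertices) together with a pigeonhole that exploits $|F|<\lambda<\kappa$ to leave $\kappa$ untouched candidate paths at every step.  Termination of the main recursion follows in at most $\lambda$ stages, because each stage either strictly reduces the rank of bad vertices or eliminates the escape arcs outright.  In the countable case $\lambda = \aleph_0$, the single-arc Observation~\ref{widget1} is supplemented by the path-based Observation~\ref{widget2} to cope with infinitely many excess escape arcs at once, producing a locally finite (but possibly infinite) $\mc C$ at the acceptable cost of dropping the $F$-avoidance clause.
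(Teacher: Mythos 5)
There is a genuine gap, and it lies in your repair step for ``bad'' vertices. When $u\in W_\alpha$ has $\lambda$ or more escape arcs, you propose to reverse \emph{each excess escape arc} $uv'$ by Observation~\ref{widget1}. This fails on two counts. First, it costs too many cycles: repairing a single bad vertex this way already requires at least $\lambda$ many reversals, so in the uncountable case you cannot keep $|\mc C|<\lambda$, and you also risk raising out-degrees elsewhere by $\lambda$. Second, the return paths you want do not exist in general: Observation~\ref{widget1} needs infinitely many pairwise edge-disjoint $v'\to u$ paths, but you confine them to $W_\alpha\cup\{v'\}$, a set of size $<\lambda$ --- when $W_\alpha$ is finite (e.g.\ early stages with $\lambda=\aleph_0$) there cannot be infinitely many edge-disjoint paths inside it at all. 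The paper's proof turns the repair around: if $u$ lies in the current out-closure (so $d^+(u)<\lambda$ is maintained) and $v\in N^+(u)$ has $d^+(v)\geq\lambda$, one reverses the \emph{single} arc $uv$ pointing \emph{into} the bad vertex, cutting $v$ off from the closure instead of shrinking its out-degree. The $\lambda$-reversibility of $uv$ comes from length-$2$ paths $v\to w\to u$ where $w$ ranges over the $\geq\lambda$ many out-neighbours of $v$ \emph{outside} the closure; these are in-neighbours of $u$ precisely because $u$ has small out-degree and $D$ is a tournament. Since each closure level has size $<\lambda$, only $<\lambda$ such arcs need reversing per level, and Lemma~\ref{lm:kapparev} does them all at once while avoiding $F$ and previously used edges; after $\omega$ levels the closure $W^*$ works, with $|W^*|<\lambda$ by regularity when $\lambda$ is uncountable.

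Your sketch also does not engage with the real difficulty in the countable case. There the out-closure $N^{+\oo}(W)$ can be infinite even after all out-degrees have been made finite, so your transfinite absorption has no reason to terminate with a finite $W^*$; saying that each stage ``reduces the rank of bad vertices'' does not address this. The paper needs a separate argument (Lemma~\ref{lm:findeg}): choose $\mc C$ minimizing $|N^+_{\rev{D}{\mc C}}(v)|$, and if the closure were still infinite, K\H onig's lemma (using that \emph{all} out-degrees are finite) yields an infinite directed path starting at $v$ that returns to $N^-(v)$ infinitely often, so Observation~\ref{widget2} reverses the whole path and strictly decreases $d^+(v)$, contradicting minimality. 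Your mention of Observation~\ref{widget2} ``to cope with infinitely many excess escape arcs at once'' does not supply this idea. Finally, note that running your recursion for up to $\lambda$ stages with strictly growing $W_\alpha$ would make $|W^*|=\lambda$, violating the required bound $|W^*|<\lambda$; the closure must be reached in fewer (in fact $\omega$) rounds, which is exactly what the paper's level-by-level argument provides.
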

 
{\DD  We let 
 
 \begin{enumerate}[(i)]
  \item $N^{+0}_D(v)=\{v\}$, $N^{+1}_D(v)=\{v\}\cup N^+_D(v)$,  
  \item in general, for $\ell\geq 1$, let $$N^{+(\ell+1)}_D(v)=N^{+\ell}_D(v)\cup \bigcup\{N^+_D(u):u\in N^{+\ell}_D(v)\},$$
  \item $N^{+\oo}_D(v)=\bigcup_{\ell<\oo}N^{+\ell}_D(v)$.
 \end{enumerate}}

 We let $N^{+\ell}(W)=\bigcup \{N^{+\ell}(v):v\in W\}$ for $\ell\leq \oo$.

\begin{proof}Suppose $d^+(v)<\lambda$ for all $v\in W$.  First, we define a sequence $\mc C_n\in \rs D$ so that 
 \begin{enumerate}[(a)]
  \item $|\mc C_n|<\lambda$, $E(\mc C_n)\cap E(\mc C_k)=\emptyset$ for all $n<k<\oo$, and 
  \item $d^+_{\rev{D}{\mc C_{\leq n}}}(v)<\lambda$ for all $v\in N^{+n}_{\rev{D}{\mc C_{\leq n}}}(W)$.
 \end{enumerate}
 
 {\DD Here, we let $\mc C_{\leq n}$ denote $\mc C_0\smf \mc C_1\smf \dots \smf \mc C_n$.}
 
 Clearly, $\mc C_0=\emptyset$ is a good choice to start with. In order to continue the induction, suppose that $u\in N^{+n}_{\rev{D}{\mc C_{\leq n}}}(W)$ and $v\in N^+(u)$ with $d^+(v)\geq \lambda$.  Then since $|N^{+n}_{\rev{D}{\mc C_{\leq n}}}(W)|<\lambda$, $v$ has at least $\lambda$ out-neighbors in $V \setm (N^{+n}_{\rev{D}{\mc C_{\leq n}}}(W))$, each of which, by construction, are in-neighbors of $u$.  Hence the arc $uv$ is $\lambda$-reversible.
 
%\begin{comment}
 \begin{figure}[H]
  \psscalebox{0.8} % Change this value to rescale the drawing.
{
\begin{pspicture}(0,-3.3434713)(12.624027,3.3434713)
\psellipse[linecolor=black, linewidth=0.04, dimen=outer](1.1307693,0.033451773)(0.9,1.7)
\psellipse[linecolor=black, linewidth=0.04, dimen=outer](2.8846154,-0.8050098)(0.3,0.6)
\psline[linecolor=black, linewidth=0.04](2.8460114,-0.235779)(1.1846154,-0.81280744)(2.8615384,-1.3742405)
\psdots[linecolor=black, dotsize=0.16](2.9384615,-0.86654824)
\psellipse[linecolor=black, linewidth=0.04, dimen=outer](4.1615386,-0.20500976)(0.26923078,1.2307693)
\psline[linecolor=black, linewidth=0.04](4.0923076,0.964221)(2.9692307,-0.8511636)(4.1384616,-1.4050097)
\psline[linecolor=black, linewidth=0.04, arrowsize=0.05291667cm 2.0,arrowlength=1.4,arrowinset=0.0]{->}(2.9846153,-0.8511636)(4.1846156,-0.17424053)
\rput{-121.38128}(6.381867,0.14191893){\psarc[linecolor=black, linewidth=0.04, linestyle=dashed, dash=0.17638889cm 0.10583334cm, dimen=outer, arrowsize=0.05291667cm 2.0,arrowlength=1.4,arrowinset=0.0]{->}(3.2307692,-1.7203944){2.3307693}{188.18465}{270.0}}
\rput[bl](2.1846154,-2.0819328){$d^+(u)<\lambda$}
\rput[bl](3.4307692,1.2565287){$d^+(v)\geq \lambda$}
\rput[bl](0.0,1.564221){\Large{$W$}}
\psellipse[linecolor=black, linewidth=0.04, dimen=outer](9.161539,-0.15116362)(0.9,1.7)
\rput[bl](11.184615,2.2){\Large{$W^*$}}
\psbezier[linecolor=black, linewidth=0.04](7.8615384,2.2719133)(7.3030047,1.4424314)(8.084115,-2.1396394)(8.984615,-2.3896251502403847)(9.885116,-2.6396108)(10.51839,-0.77295315)(11.030769,0.5180672)(11.543149,1.8090875)(13.192808,2.2680814)(12.307693,2.7334518)(11.422576,3.198822)(8.420073,3.1013951)(7.8615384,2.2719133)
\rput[bl](8.2,1.4257594){\Large{$W$}}
\psdots[linecolor=black, dotsize=0.16](1.1076924,-0.8050098)
\psline[linecolor=black, linewidth=0.04, arrowsize=0.05291667cm 2.0,arrowlength=1.4,arrowinset=0.0]{->}(1.1230769,-0.8050098)(2.876923,-0.86654824)
\rput[bl](0.93846154,-1.235779){$u$}
\rput[bl](2.8153846,-1.2050098){$v$}
\psdots[linecolor=black, dotsize=0.16](9.2153845,-1.0050098)
\psline[linecolor=black, linewidth=0.04, arrowsize=0.05291667cm 2.0,arrowlength=1.4,arrowinset=0.0]{<-}(9.307693,-1.0203943)(11.061539,-1.035779)
\rput[bl](9.046154,-1.435779){$u$}
\rput[bl](10.923077,-1.4050097){$v$}
\psdots[linecolor=black, dotsize=0.16](11.076923,-1.0511636)
\rput[bl](0.52307695,-3.4){\Large{$D$}}
\rput[bl](8.8,-3.4){\Large{$\rev{D}{\mc C}$}}
\rput{-18.061342}(-0.065531746,3.423812){\psarc[linecolor=black, linewidth=0.04, dimen=outer, arrowsize=0.05291667cm 2.0,arrowlength=1.4,arrowinset=0.0]{->}(10.7384615,1.9180672){1.4923077}{127.57948}{185.36653}}
\rput{-89.87703}(11.135775,10.349681){\psarc[linecolor=black, linewidth=0.04, dimen=outer, arrowsize=0.05291667cm 2.0,arrowlength=1.4,arrowinset=0.0]{->}(10.753846,-0.40500978){1.4923077}{127.57948}{185.36653}}
\rput{-18.061342}(0.29152307,3.19346){\psarc[linecolor=black, linewidth=0.04, dimen=outer, arrowsize=0.05291667cm 2.0,arrowlength=1.4,arrowinset=0.0]{<-}(10.192307,0.6796056){1.0846153}{127.57948}{185.36653}}
\rput{-18.061342}(-0.15710214,0.77073234){\psarc[linecolor=black, linewidth=0.04, dimen=outer, arrowsize=0.05291667cm 2.0,arrowlength=1.4,arrowinset=0.0]{<-}(2.3461537,0.8796056){1.0846153}{127.57948}{185.36653}}
\end{pspicture}
}
\vspace{0.5cm}
\caption{Reversing the $uv$ arcs when $d^+(v)$ is too large.}
\label{fig:degcontroll}

 \end{figure}
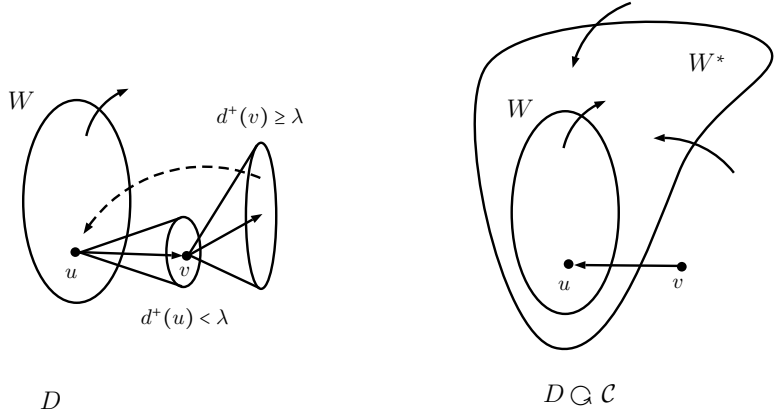
%\end{comment}
 
  Furthermore, there are less than $\lambda$ many such $uv$ arcs (since $v\in N^{+(n+1)}_{\rev{D}{\mc C_{\leq n}}}(W)$ and $|N^{+(n+1)}_{\rev{D}{\mc C_{\leq n}}}(W)|<\lambda$). Finally, there are $<\lambda$ many arcs from $\mc C_{\leq n}$ to avoid. Hence we can proceed by applying Lemma \ref{lm:kapparev} to find $\mc C_{n+1}$ which reverses all these $uv$ arcs while avoiding $\mc C_{\leq n}$ (and arcs in $F$, if needed).
 
 Now $W^*=N^{+\oo}_{\rev{D}{\mc C_{<\oo}}}(W)$ will have size $<\lambda$ and $|\mc C_{<\oo}|<\lambda$ if $\lambda>\aleph_0$ and we are done.
 
 If $\lambda=\aleph_0$ then $N^{+\oo}_{\rev{D}{\mc C_{<\oo}}}(W)$ can have size $\aleph_0$ but all out-degrees are finite in $N^{+\oo}_{\rev{D}{\mc C_{<\oo}}}(W)$. So, it suffices to prove the following:
 
  \begin{lemma}\label{lm:findeg}
          Suppose that $D$ is a tournament and $W\subseteq V$ is finite. If {\DD $\Delta^+(D)\leq \aleph_0$} then there is $\mc C\in \rs D$ so that $N^{+\oo}_{\rev{D}{\mc C}}(W)$ is finite and {\DD $\Delta^+(\rev{D}{\mc C})\leq \aleph_0$} still holds.

          If {\DD$\Delta^-(D)\leq \aleph_0$} then there is $\mc C\in \rs D$ so that $N^{-\oo}_{\rev{D}{\mc C}}(W)$ is finite and {\DD$\Delta^-(\rev{D}{\mc C})\leq \aleph_0$} still holds. 
          
         \end{lemma}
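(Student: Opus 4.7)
The plan is to iterate Observation \ref{widget2} finitely many times, with each iteration strictly decreasing a nonnegative integer potential. I treat the $\Delta^+(D)\leq\aleph_0$ case; the $\Delta^-$ case follows by reversing every arc of $D$ and applying the same argument. Set
\[
\mu(D,W)=\sum_{w\in W}d^+_D(w),
\]
which is a finite integer since $W$ is finite and every out-degree is finite.

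At each step, if $N^{+\oo}_D(W)$ is already finite we stop. Otherwise, since $W$ is finite, some $w\in W$ has $N^{+\oo}_D(w)$ infinite; as out-degrees are finite, the rooted out-tree at $w$ is finitely branching and infinite, so by K\"onig's lemma it contains an infinite one-way directed path $w=v_0v_1v_2\dots$ in $D$. Because $D$ is a tournament with $d^+(v_0)<\oo$, only finitely many $v_n$ can lie in $N^+(v_0)$, so $N^-(v_0)\cap\{v_n:n<\oo\}$ is infinite, and Observation \ref{widget2} produces $\mc C^*\in\rs D$ whose net effect is to reverse exactly the path arcs $v_iv_{i+1}$. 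The crucial verification is the effect on out-degrees: at $v_0=w$ the out-neighbor $v_1$ is lost and nothing is gained (the back-arcs $v_{n_k}v_0$ used in constructing $\mc C^*$ are each reversed twice, with no net change); at every $v_i$ with $i\geq 1$ the out-neighbor $v_{i+1}$ is replaced by $v_{i-1}$, keeping $d^+(v_i)$ unchanged; all other vertices are untouched. In particular, for any $w'\in W\setminus\{w\}$, $d^+(w')$ is preserved regardless of whether $w'$ lies on the path, so $\mu$ decreases by exactly one and $\Delta^+\leq\aleph_0$ is preserved throughout.

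Since $\mu$ strictly decreases at each step and is a nonnegative integer, the iteration terminates in at most $\mu(D,W)$ steps with $N^{+\oo}(W)$ finite (if $\mu$ ever reaches $0$ then every $w\in W$ has empty $N^+$, so $N^{+\oo}(W)=W$). Concatenating the finitely many pieces produces the required $\mc C\in\rs D$, and local finiteness is automatic for a finite concatenation of locally finite sequences. The main subtlety is the choice of potential: tracking $|N^{+\oo}(W)|$ directly is unworkable because it can fluctuate in complicated ways as iterations interact, whereas the local out-degree calculation above shows that $\mu$ behaves predictably, which is what drives the argument to finitely many steps.
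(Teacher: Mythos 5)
Your proof is correct and runs on the same engine as the paper's: K\H onig's lemma (using $\Delta^+\leq\aleph_0$, not just finiteness of one degree) to extract an infinite outgoing path from a vertex $w\in W$ with $N^{+\oo}(w)$ infinite, the tournament property to get infinitely many back-arcs to $w$, and Observation \ref{widget2} to reverse the path, which lowers $d^+(w)$ by one while changing no other out-degree. The only difference is bookkeeping: the paper first settles $|W|=1$ by fixing $\mc C\in\rs D$ minimizing $d^+(v)$ subject to $\Delta^+\leq\aleph_0$ (the same descent, phrased as an extremal choice) and then inducts on $|W|$, restricting to $V\setminus N$ to protect the already-finite closure, whereas your potential $\sum_{w\in W}d^+_{}(w)$ treats all of $W$ in a single terminating iteration; both are sound.
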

         
         %It suffices to prove for $\Delta^-(D)=\aleph_0$; indeed, let $\larr D$ denote the digraph on vertices $V(D)$ with arcs $A(\larr D)=\{vu:uv\in A(D)\}$. Now $\Delta^-(D)=\aleph_0$ iff $\Delta^+(\larr D)=\aleph_0$ and if $v_0\dots v_n$ is a cycle in $D$ then $v_n\dots v_1$ is a cycle in $\larr D$. 
         
\begin{proof} First, suppose that $W=\{v\}$. Suppose $\Delta^+(D)=\aleph_0$ and find $\mc C\in \rs D$ which minimizes $N^{+}_{\rev{D}{\mc C}}(v)$ while keeping {\DD $\Delta^+(\rev{D}{\mc C})\leq \aleph_0$}. We claim that $N^{+\oo}_{\rev{D}{\mc C}}(v)$ is finite.
 
 Otherwise, K\H onig's lemma implies that there are $v=v_0,v_1,v_2\dots$ so that $v_{n}v_{n+1}\in A(\rev{D}{\mc C})$. (To apply K\H onig's lemma we need {\DD$\Delta^+(D)\leq \aleph_0$}, the assumption that $v$ has finite degree is not enough.) Furthermore,  note that $v_nv_0\in A(\rev{D}{\mc C})$ for all but finitely many $n$ since $d^+_{\rev{D}{\mc C}}(v)$ is finite. So, we can apply Observation \ref{widget2} to find $\la C_n\ra_{n\in \oo}\in \rs{\rev{D}{\mc C}}$ so that $v_{n+1}v_{n}\in A(\rev{D}{\mc C^*})$ for all $n\in \oo$ where $\mc C^*=\mc C\smf\la C_n\ra_{n\in \oo}$ and no other arcs from $\rev{D}{\mc C}$ are changed.

%  We claim that one can reverse the arcs exactly along this path:
%  
%  \begin{claim}
%   There is $\la C_n\ra_{n\in \oo}\in \rs{\rev{D}{\mc C}}$ so that 
%   \begin{enumerate}
%    \item $v_{n+1}v_{n}\in A(\rev{D}{\mc C^*})$ for each $n<\oo$,
%    \item $A(\rev{D}{\mc C^*})\setm \{v_{n+1}v_{n}:n\in \oo\}=A(\rev{D}{\mc C})\setm \{v_{n}v_{n+1}:n\in \oo\}$
%   \end{enumerate}
%  where $\mc C^*=\mc C\smf\la C_n\ra_{n\in \oo}$. 
%  \end{claim}\begin{proof}
%  Note that $v_nv_0\in A(\rev{D}{\mc C})$ for all $n\geq n_0$ since $d^+_{\rev{D}{\mc C}}(v)$ is finite. Now, we simply let $C_0=v_0v_1\dots v_{n_0}$, $C_1=v_0 v^{n_0}v_{n_0+1}$ and $C_{k+1}=v_0 v_{n_0+k-1}v_{n_0+k}$ in general.
%  
%  
% \end{proof}

It is clear that $d^+_{\rev{D}{\mc C^*}}(v)=d^+_{\rev{D}{\mc C}}(v)-1$ while $d^+_{\rev{D}{\mc C^*}}(v')=d^+_{\rev{D}{\mc C}}(v')$ for all $v'\in V\setm \{v\}$. This contradicts the choice of $\mc C$.
 
 Now, proceed by induction on $|W|$. Let $v\in W$ and find $\mc C_0$ so that  $N=N^{+\oo}_{\rev{D}{\mc C_0}}(W\setm\{v\})$ is finite and each vertex still has finite out-degree in $\rev{D}{\mc C_0}$. If $v\in N$ then we are done. Otherwise, apply the singleton case in $D_1=(\rev{D}{\mc C_0})\uhr( V\setm N)$ for $v$. Let $\mc C_1\in \rs{D_1}$  be such that $N'=N^{+\oo}_{\rev{D_1}{\mc C_1}}(v)$ is finite and still each vertex has finite in-degree. Now $N^{+\oo}_{\rev{D}{(\mc C_0\smf \mc C_1)}}(W)\subseteq N\cup N'$ is finite. Clearly,{\DD $\Delta^+(\rev{D}{(\mc C_0\smf \mc C_1)})\leq \aleph_0$.}
 \end{proof}
 
 %So, to finish the $\lambda=\aleph_0$ case of Lemma \ref{lm:degreecontroll}, with an extra cycle reversion we can find the appropriate finite $W^*$ in $N^{+\oo}_{\rev{D}{\mc C_{<\oo}}}(W)$.
 
\end{proof}

 Now, the main result of this section is the following:

 \begin{theorem}\label{thm:smalldegree}
  Suppose that $D\in \mf T_{\kappa}$ and {\DD$\mu=\cf(\kappa)$}.
  
  If {\DD $\Delta^+(D)\leq \kappa$} then there is  $\mc C\in \rs D$ and a partition $\bigsqcup\{V_\xi:\xi<\mu\}$ of $V(D)$ so that $|V_\xi|<\kappa$ and $uv\in A(\rev{D}{\mc C})$ for all $u\in V_\xi,v\in V_\zeta$ with $\zeta<\xi$. 
  
   If {\DD$\Delta^-(D)\leq \kappa$} then there is  $\mc C\in \rs D$ and a partition $\bigsqcup\{V_\xi:\xi<\mu\}$ of $V(D)$ so that $|V_\xi|<\kappa$ and $uv\in A(\rev{D}{\mc C})$ for all $v\in V_\xi,u\in V_\zeta$ with $\zeta<\xi$. 
 \end{theorem}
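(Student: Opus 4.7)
My approach is transfinite recursion of length $\mu=\cf(\kappa)$, at each stage peeling off a new ``sink block'' $V_\xi$ by applying Lemma \ref{lm:degreecontroll}. I treat only $\Delta^+(D)\leq \kappa$; the dual case $\Delta^-(D)\leq \kappa$ follows by the same argument applied to the reverse tournament.

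Bookkeeping setup: enumerate $V=\{u_\alpha:\alpha<\kappa\}$. For $\kappa$ uncountable regular take $\mu=\kappa$ and $\lambda=\kappa$ at every stage, seeding with a single vertex $u_{\alpha_\xi}$, where $\alpha_\xi$ is the least index of a currently uncovered vertex. For $\kappa$ singular fix an increasing sequence $\la \lambda_\xi:\xi<\mu\ra$ of regular cardinals cofinal in $\kappa$ with $\mu\leq \lambda_0$, work at stage $\xi$ with $\lambda=\lambda_\xi^+\leq \kappa$, and seed with $W_\xi=\{u_\alpha\in V^{(\xi)}:\alpha,d^+_D(u_\alpha)<\lambda_\xi\}$. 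In both cases $\lambda$ is regular with $\lambda\leq \kappa$, as required by Lemma \ref{lm:degreecontroll}; the case $\kappa=\aleph_0$ is handled separately below.

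The recursion: given $\la V_\zeta,\mc C_\zeta:\zeta<\xi\ra$ with $V_\zeta$ pairwise disjoint of size $<\kappa$, set $V^{(\xi)}=V\setm \bigcup_{\zeta<\xi}V_\zeta$, $D_\xi=\rev{D}{\mc C_{<\xi}}\uhr V^{(\xi)}$, and let $W_\xi\subseteq V^{(\xi)}$ be the seed set specified above. Inductively one checks that $|E(\mc C_{<\xi})|<\lambda$, so each $v\in W_\xi$ has had fewer than $\lambda$ incident arcs reversed; combined with $d^+_D(v)<\lambda$ and the regularity of $\lambda$ this yields $d^+_{D_\xi}(v)<\lambda$. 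Apply Lemma \ref{lm:degreecontroll} to $D_\xi$ with parameters $\lambda$, $W=W_\xi$, and $F=E(\mc C_{<\xi})$: this produces $V_\xi\supseteq W_\xi$ of size $<\lambda$, together with a reversal sequence $\mc C_\xi\in \rs{D_\xi}$ of size $<\lambda$ satisfying $E(\mc C_\xi)\cap E(\mc C_{<\xi})=\emptyset$, such that after reversing $\mc C_\xi$ every arc of $D_\xi$ between $V^{(\xi)}\setm V_\xi$ and $V_\xi$ points into $V_\xi$. Concatenating, $\mc C=\bigsqcup_{\xi<\mu}\mc C_\xi$: by $F$-avoidance each edge lies in at most one $\mc C_\xi$, and each $\mc C_\xi$ is internally locally finite, so $\mc C\in \rs D$. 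Coverage holds because every $u_\alpha$ is absorbed by some stage $<\mu$ (by stage $\alpha$ in the regular case since $\alpha_\xi$ is strictly increasing; by stage $\min\{\xi:\alpha,d^+_D(u_\alpha)<\lambda_\xi\}$ in the singular case). Finally, any arc between $V_\zeta$ and $V_\xi$ with $\zeta<\xi$ is an arc of $D_\zeta$ that $\mc C_\zeta$ points into $V_\zeta$, and subsequent $\mc C_{\xi'}$ acts strictly within $V^{(\xi'+1)}\subseteq V\setm V_\zeta$, so the direction is preserved.

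For $\kappa=\aleph_0$ the $F$-avoidance clause of Lemma \ref{lm:degreecontroll} is not available, so I iterate Lemma \ref{lm:findeg} directly: at stage $n$ apply it to $D_n$ with a finite seed $W_n$ (e.g.\ the first uncovered $u_k$) to obtain $\mc C_n$ preserving $\Delta^+\leq \aleph_0$, and let $V_n=N^{+\oo}_{\rev{D_n}{\mc C_n}}(W_n)$, which is finite. Local finiteness of $\mc C=\bigsqcup_n\mc C_n$ is automatic since every vertex is absorbed at some finite stage, so each edge belongs to only finitely many $D_n$ and hence only finitely many $\mc C_n$. The main obstacle across the proof is the singular case, where the sequence $\lambda_\xi$, the schedule of absorption, and the cumulative size of $\mc C_{<\xi}$ must be coordinated so that the induction terminates in exactly $\mu=\cf(\kappa)$ steps with every vertex placed; the conditions $\mu\leq \lambda_0$ and strict increase of the regular $\lambda_\xi$ make the routine cardinal arithmetic go through.
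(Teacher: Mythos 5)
Your proof is correct, and its skeleton (a transfinite recursion of length $\cf(\kappa)$ peeling off blocks via Lemma \ref{lm:degreecontroll}, with the $F$-avoidance clause guaranteeing edge-disjointness and hence local finiteness of the concatenated sequence, and with the $\kappa=\aleph_0$ case run through Lemma \ref{lm:findeg}) is the same as the paper's. The interesting divergence is in the singular case, which is also where the paper has to work hardest. The paper seeds stage $\xi$ with \emph{all} so-far-uncovered vertices whose out-degree lies below $\kappa_\xi$, and must therefore prove that a tournament has at most $\kappa_\xi^+$ vertices of out-degree $\leq\kappa_\xi$; this is done via (a weak form of) Hajnal's Set Mapping Theorem, and then Lemma \ref{lm:degreecontroll} is applied with $\lambda=\kappa_\xi^{++}$. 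You instead cut the seed down by a fixed enumeration of $V$, taking only uncovered $u_\alpha$ with both $\alpha<\lambda_\xi$ and $d^+_D(u_\alpha)<\lambda_\xi$, so the seed trivially has size $\leq\lambda_\xi<\lambda_\xi^+=\lambda$, and coverage still holds since every vertex eventually satisfies both bounds at some stage below $\mu$; the bookkeeping $|E(\mc C_{<\xi})|\leq\lambda_\xi<\lambda$ (using $\mu\leq\lambda_0$ and the regularity of $\lambda$) keeps the hypotheses of Lemma \ref{lm:degreecontroll} intact. This buys you a proof with no appeal to the set-mapping theorem, at the cost of nothing substantial; the paper's route yields the extra structural fact that the degree-classes $W_\xi$ themselves are small, which is of some independent interest but not needed for the theorem. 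A minor further difference: for regular uncountable $\kappa$ the paper observes that no reversions are needed at all (closures $N^{+\oo}(W)$ stay of size $<\kappa$), whereas you still invoke Lemma \ref{lm:degreecontroll} with $\lambda=\kappa$; this is harmless, just less economical.
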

\begin{proof} We prove for {\DD$\Delta^+(D)\leq \kappa$.}

 First, if $\kappa=\aleph_0$ then we can simply apply Lemma \ref{lm:degreecontroll} to define the finite blocks inductively in $\oo$ steps. Let $\prec$ be an enumeration of $V$ of type $\oo$. Given finite $V_0,V_1\dots V_n$ and (possibly infinite) $\mc C_0\dots \mc C_n$, take $v_{n+1}=\min_{\prec}V\setm (V_0\cup\dots \cup V_n)$ and apply Lemma \ref{lm:degreecontroll} to $\{v_{n+1}\}$ in $\rev{D}{\mc C_{\leq n}}\uhr V\setm (V_0\cup\dots \cup V_n)$. Now we have $\mc C_{n+1}$ and $V_{n+1}\supseteq \{v_{n+1}\}$ so that all arcs point into $V_{n+1}$ from $V\setm (V_0\cup\dots \cup V_{n+1})$. It is clear that $\mc C=\la \mc C_n\ra_{n\in \oo}\in \rs D$ is as desired.
 
 Second, if $\kappa>\aleph_0$ is regular, then we don't need to reverse any cycles. Indeed, if $W$ has size $<\kappa$ then $N^{+\oo}(W)$ still has size $<\kappa$. So again, an easy induction gives the result.
 
 Finally, suppose that $\kappa>\cf(\kappa)=\mu$; let $\la\kappa_\xi\ra_{\xi<\mu}$ be a continuous, increasing, and  cofinal sequence of cardinals in $\kappa$.
      
      Write $V=\bigsqcup\{W_\xi:\xi<\mu\}$ where $W_0=\{v\in V:d^+(v)\leq \kappa_0\}$ and $W_{\xi}=\{v\in V:\sup\{\kappa_\zeta:\zeta<\xi\}<d^+(v)\leq \kappa_\xi\}$. %We will call the elements of $\bigcup_{\xi<\mu} E(W_\xi)$ block edges, all other edges are crossing edges. A crossing edge $uv\in \arr{V_\xi W_\zeta}$ is forward if $\xi<\zeta$ and backward if $\zeta<\xi$.
      
      \begin{lemma}
       $|W_\xi|\leq \kappa_\xi^+$ for all $\xi<\mu$.
      \end{lemma}
\begin{proof}
Let us recall the following weak consequence of A. Hajnal's Set Mapping Theorem \cite{setmapping}: if $\lambda$ is an infinite cardinal and $f:X\to \mc P(X)$ so that $|X|\geq \lambda^{++}$ but $|f(x)|\leq \lambda$ for all $x\in X$ then there is $x\neq y\in X$  so that $x\notin f(y)$ and $y\notin f(x)$. Indeed, find a $Y\subseteq X$ of size $\leq \lambda^+$ so that $y\in Y$ implies $f(y)\subseteq Y$. Now, if $x\in X\setm Y$ and $y\in Y\setm f(x)$ then $x\notin f(y)$ and $y\notin f(x)$ as desired.

Now, if  $W_\xi$ has size at least $\kappa_\xi^{++}$ then we can consider the map $f:W_\xi\to \mc P(W_\xi)$ defined by $f(v)=N^+(v)\cap W_\xi$ for $v\in W_\xi$. $|f(v)|\leq \kappa_\xi$ by assumption so there must be some  $x\neq y\in W_\xi$ so that $x\notin N^+(y)$ and $y\notin N^+(x)$. However, this contradicts that $D$ was a tournament i.e. either $xy$ or $yx$ is an arc. 
     \end{proof}
 
 Now, define $W^*_\xi$ and $\mc C_\xi$ as follows: apply Lemma \ref{lm:degreecontroll} to $W_0$ with $\lambda=\kappa_0^{++}$ to find   $W^*_0$ of size $\leq \kappa_0^+$  which contains $W_0$ so that all arcs point into $W^*_0$ from $V\setm W^*_0$ after reversing $\mc C_0$ (at most $\kappa_0^+$ many cycles). In general, let $W^*_{<\xi}=\bigcup\{W^*_\zeta:\zeta<\xi\}$ and if $W_\xi\setm W^*_{<\xi}$ is not empty then we apply Lemma \ref{lm:degreecontroll} to  $W_\xi\setm W^*_{<\xi}$ in $V\setm W^*_{<\xi}$ with $\lambda=\kappa_\xi^{++}$ and $F=\bigcup\{A(\mc C_\zeta):\zeta<\xi\}$. Since $\sup_{\zeta<\xi}\kappa_\zeta^+<\kappa_\xi^{++}$, it is not an issue to avoid all previously used arcs when finding $\mc C_\xi$ and $W^*_\xi\supseteq W_\xi\setm W^*_{<\xi} $. {\DD So, $V_\xi=W^*_\xi$ for $\xi<\mu$ is as desired.}

\end{proof}

In particular, we get the next corollary:

\begin{cor}\label{cor:second}
  Suppose that $D\in \mf T_{\kappa}$ and {\DD $\Delta^+(D)\leq \kappa$  or $\Delta^-(D)\leq \kappa$}. Then there is  $\mc C\in \rs D$ so that strong components of  $\rev{D}{\mc C}$ have size $<\kappa$.
\end{cor}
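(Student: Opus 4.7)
The plan is to deduce this corollary almost immediately from Theorem \ref{thm:smalldegree}. Assume without loss of generality that $\Delta^+(D)\leq \kappa$; the other case is symmetric. Applying Theorem \ref{thm:smalldegree}, I obtain some $\mc C\in \rs D$ together with a partition $V(D)=\bigsqcup\{V_\xi:\xi<\mu\}$ (where $\mu=\cf(\kappa)$) such that $|V_\xi|<\kappa$ for every $\xi<\mu$, and in $D':=\rev{D}{\mc C}$ every arc between distinct blocks points backward with respect to the linear order on $\mu$: that is, $uv\in A(D')$ whenever $u\in V_\xi$, $v\in V_\zeta$ and $\zeta<\xi$.

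The key observation is that in $D'$ no directed cycle can use vertices from two distinct blocks. Indeed, suppose $C$ is a directed cycle in $D'$ that meets both $V_\xi$ and $V_\zeta$ for some $\zeta<\xi$. Traversing $C$, one must eventually pass from a vertex in some block $V_\alpha$ to a vertex in some block $V_\beta$ with $\alpha<\beta$; but every inter-block arc in $D'$ points from higher-indexed blocks to lower-indexed blocks, contradicting the existence of such an arc. Hence every directed cycle of $D'$ is contained in a single block $V_\xi$.

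Since strong components are precisely the equivalence classes of the mutual-reachability relation generated by directed cycles (equivalently, every strong component with more than one vertex is a union of directed cycles), each strong component of $D'$ lies inside some $V_\xi$, and therefore has cardinality strictly less than $\kappa$. This gives exactly what the corollary asserts.

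There is essentially no obstacle here once Theorem \ref{thm:smalldegree} is in hand; the only care needed is the (trivial) verification that no cycle crosses the partition, and this follows from the fact that the blocks are linearly ordered and all inter-block arcs point in the same direction. All of the real work has been done in proving Theorem \ref{thm:smalldegree}, especially in handling the singular case where one must iteratively apply Lemma \ref{lm:degreecontroll} along the cofinal sequence $\la \kappa_\xi\ra_{\xi<\mu}$ while reserving fresh arcs for each successive reversal step.
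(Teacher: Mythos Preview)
Your proof is correct and is exactly the intended argument: the paper does not even supply a separate proof for this corollary, since it follows immediately from Theorem \ref{thm:smalldegree} via the observation that every strong component of $\rev{D}{\mc C}$ must lie inside a single block $V_\xi$. Your verification that no directed cycle can cross the partition is the only thing to check, and you have done so.
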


 \section{Part III: uniform tournaments in general}\label{sec:unif}
 
 Now, {\DD we return to the class $\mf{UT}_\kappa$ i.e.} arbitrary $\kappa$-uniform tournaments: our aim is to show that, after appropriate cycle reversions, in each  strong components of $D$ either the in-degrees are $<\kappa$ or the out-degrees are $<\kappa$. {\DD Once we achieved this,} we can make each strong component small in size by Part II.

 \begin{theorem}\label{thm:smallcomp}
  Suppose that $D\in \mf{UT}_\kappa$. Then there is $\mc C\in \rs D$ and $V=V^+\cup V^-$ so that
  \begin{enumerate}
   \item $\Delta^-((\rev{D}{\mc C})\uhr V^+)\leq \kappa$,
   \item $\Delta^+((\rev{D}{\mc C})\uhr V^-)\leq \kappa$, and 
   \item each strong component of $\rev{D}{\mc C}$ is contained either in $V^+$ or $V^-$.
  \end{enumerate}

 \end{theorem}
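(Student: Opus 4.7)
The plan is a two-phase argument following the Part III outline. In Phase 1, without reversing any cycles, I aim to produce an initial partition $V = V_0^+ \cup V_0^-$ with $\Delta^-(D \uhr V_0^+) \leq \kappa$ and $\Delta^+(D \uhr V_0^-) \leq \kappa$, together with one of two structural dichotomies: either (a) every arc in $\arr{V_0^- V_0^+}$ is $\kappa$-reversible in $D$, or (b) there is a set $W \subseteq V$ of size $<\kappa$ meeting every arc in $\arr{V_0^+ V_0^-}$. A natural starting partition in a $\kappa$-uniform tournament is $V_0^+ = N^+(v_0)$ and $V_0^- = V \setm V_0^+$ for some fixed $v_0$: the uniformity immediately yields $|N^-(v) \cap V_0^+| < \kappa$ for every $v \in V_0^+$ (since $N^-(v) \Delta N^-(v_0)$ has size $<\kappa$ and $N^-(v_0) \cap V_0^+ = \emptyset$), and symmetrically the out-degree bound on $V_0^-$. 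The dichotomy itself I would extract by a Menger-type analysis: if some arc in $\arr{V_0^- V_0^+}$ fails to be $\kappa$-reversible then an infinite Menger argument should produce a separator of size $<\kappa$, which, after a Zorn-style maximization refining the partition, can be arranged to cover all of $\arr{V_0^+ V_0^-}$.

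In Phase 2 case (a), I apply Lemma \ref{lm:kapparev} to the set $F = \arr{V_0^- V_0^+}$ of at most $\kappa$ many $\kappa$-reversible arcs. The resulting $\mc C \in \rs D$ reverses exactly the arcs of $F$. Since $F$ is disjoint from the edges inside $V_0^+$ and inside $V_0^-$, the induced subdigraphs on these two sides are unchanged in $\rev{D}{\mc C}$, so the degree conditions persist. After the reversal, every edge between $V_0^+$ and $V_0^-$ becomes an arc from $V_0^+$ to $V_0^-$, preventing any directed cycle in $\rev{D}{\mc C}$ from crossing the partition. Setting $V^{\pm} = V_0^{\pm}$ then verifies all three conclusions.

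Phase 2 case (b) is the more delicate part. Here I use the covering set $W$ in tandem with Lemma \ref{lm:degreecontroll} to absorb the offending cross-arcs: after splitting $W$ according to which side each vertex belongs to, and applying Lemma \ref{lm:degreecontroll} in the appropriate induced subdigraphs (where the relevant degree is already $<\kappa$), I expect to obtain $W^* \supseteq W$ of size $<\kappa$ and $\mc C \in \rs D$ after which every edge between $W^*$ and $V \setm W^*$ points in a prescribed direction in $\rev{D}{\mc C}$. Absorbing $W^*$ into one side of the partition and taking the complement on the other gives a modified $V^\pm$ whose cross-arcs all agree in orientation, blocking any directed cycle from crossing, while $\kappa$-uniformity guarantees that the degree conditions survive the $<\kappa$ perturbation. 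The principal obstacle, as flagged in the outline, is the singular $\kappa$ case: a single application of Lemma \ref{lm:degreecontroll} no longer suffices, and one must iterate along a cofinal sequence in $\kappa$ as in the proof of Theorem \ref{thm:smalldegree}, carefully tracking cycles at each stage so that the accumulated sequence remains edge-disjoint and locally finite.
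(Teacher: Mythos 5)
Your Phase 1 partition and its degree bounds are correct and coincide with the paper's Lemma \ref{lm:unifpart}, and your Phase 2 case (a) is exactly the paper's step via Lemma \ref{lm:kapparev}. However, the dichotomy itself is not established by what you propose. An ``infinite Menger argument'' applied to a single non-$\kappa$-reversible arc $uv\in\arr{V^-V^+}$ yields at best a family of fewer than $\kappa$ \emph{edges} meeting every $v\to u$ path; it concerns only that one pair, and no mechanism is indicated for converting it --- ``after a Zorn-style maximization refining the partition'' --- into one vertex set $W\in[V]^{<\kappa}$ meeting \emph{every} arc of $\arr{V^+V^-}$. The correct argument is simpler and runs the other way: take a maximal family of pairwise vertex-disjoint arcs $u_\xi v_\xi\in\arr{V^+V^-}$; if it has size $<\kappa$, its vertex set is the required $W$, while if it has size $\kappa$ then for any $uv\in\arr{V^-V^+}$ the degree bounds discard $<\kappa$ of its members and the remaining ones give $\kappa$ edge-disjoint paths $v\,u_\xi\,v_\xi\,u$, so every backward arc is $\kappa$-reversible. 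A smaller point: when you later apply Lemma \ref{lm:degreecontroll} to $W$ inside $V^-\cup W$, you also need $d^+(w)<\kappa$ there for $w\in W\cap V^+$; this follows from \emph{global} $\kappa$-uniformity (as $N^+(w)\cap V^-\subseteq N^+(w)\Delta N^+(v_0)$), not from the in-degree bound on $V^+$, and your parenthetical ``the relevant degree is already $<\kappa$'' skips this.

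The substantial gap is case (b) for singular $\kappa$, which is the technical heart of the theorem and which you leave at ``iterate Lemma \ref{lm:degreecontroll} along a cofinal sequence as in Theorem \ref{thm:smalldegree}''. That plan is not shown to work and faces a real obstruction: in case (b) there need not be a single regular $\lambda<\kappa$ bounding $|N^+(w)\cap V^-|$ for all $w\in W\cup V^+$ (each such degree is $<\kappa$, but their supremum can be $\kappa$), and an iteration with increasing $\lambda$'s may, at each later stage, enlarge the absorbing set and reverse arcs incident to vertices treated earlier, jeopardizing the orientation of cross-arcs already achieved, the edge-disjointness of the stages, and local finiteness. The paper avoids this with a second dichotomy rather than an iteration: it first applies Theorem \ref{thm:smalldegree} to $D\uhr(V^+\cup W)$ and to $D\uhr(V^-\cup W)$ to obtain block decompositions $\la V^+_\xi\ra_{\xi<\mu}$ and $\la V^-_\xi\ra_{\xi<\mu}$; if beyond every $V^+_{<\xi}$ there are vertices of $V^+\cup W$ with out-degree into $V^-$ exceeding any prescribed $\lambda<\kappa$, then every arc of $\arr{V^-V^+}$ is $\kappa$-reversible via length-four paths $v\,x\,w\,y\,u$ threaded through the blocks and one concludes as in case (a); otherwise some $\xi_0<\mu$ and a regular $\lambda<\kappa$ bound these out-degrees, and a single application of Lemma \ref{lm:degreecontroll} inside $V^*=V^-\cup W\cup V^+_{<\xi_0}$ at this $\lambda$ yields $W^*$, after which a careful case-check shows all arcs point from $V^*\setm W^*$ to $(V^+\setm V^+_{<\xi_0})\cup W^*$. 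Without this dichotomy, or a genuinely worked-out substitute for it, the singular case in your proposal remains unproved.
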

 
 {\DD It will be clear immediately  that (1) and (2) are very easily satisfied while we need to work for (3) more.}
 
\begin{proof} First, we need the following:

 \begin{lemma}\label{lm:unifpart}
  Suppose that $D\in \mf {UT}_{\kappa}$. Then there is a partition $V=V^+\cup V^-$ so that in-degrees are $<\kappa$ in $D\uhr V^+$ and  out-degrees are $<\kappa$ in $D\uhr V^-$. Moreover, either
 \begin{enumerate}[(a)]
   \item any arc $a\in \arr{V^-V^+}$ is $\kappa$-reversible, or 
  \item there is $W\in [V]^{<\kappa}$ so that any arc $a\in \arr{V^+V^-}$ meets $W$.
 
 \end{enumerate}
 \end{lemma}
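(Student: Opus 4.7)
The plan is to handle the generic case (every vertex having both in- and out-degree $\kappa$) via a canonical partition induced by a single vertex, then establish the dichotomy via a matching argument on the set of cross-arcs.

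First, by $\kappa$-uniformity, if any vertex has in-degree $<\kappa$ then every vertex does, since in-neighborhoods differ in $<\kappa$ elements; similarly for out-degree. Because $|V|=\kappa$, each vertex has at least one of in- or out-degree equal to $\kappa$, so the only possibilities are: all in-degrees $<\kappa$, all out-degrees $<\kappa$, or every vertex has both in- and out-degree exactly $\kappa$. In the first case set $V^+=V, V^-=\emptyset$; in the second, $V^+=\emptyset, V^-=V$. Either way $\arr{V^+V^-}=\emptyset$, so (b) is trivial with $W=\emptyset$.

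So I may assume every vertex has in- and out-degree $\kappa$. Fix $v_0\in V$ and set $V^+=N^+(v_0)\cup\{v_0\}$ and $V^-=N^-(v_0)$. For $v\in V^+$, since $N^-(v_0)\cap V^+=V^-\cap V^+=\emptyset$, $\kappa$-uniformity gives
\[
|N^-(v)\cap V^+|\leq |(N^-(v)\Delta N^-(v_0))\cap V^+|<\kappa;
\]
an analogous bound using $N^+(v_0)\cap V^-=\emptyset$ yields $|N^+(v)\cap V^-|<\kappa$ for $v\in V^-$, so (1) and (2) hold. Now apply Zorn's lemma to obtain a maximal matching $M$ in the bipartite digraph $X=\arr{V^+V^-}$, i.e., a maximal set of pairwise vertex-disjoint arcs. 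If $|M|<\kappa$, take $W=V(M)$, which has $|W|\leq 2|M|<\kappa$; by maximality every $xy\in X$ must share an endpoint with some arc of $M$ (else $M\cup\{xy\}$ would be larger), so $W$ meets every arc of $X$, establishing (b).

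Otherwise $|M|\geq\kappa$: list $\kappa$ of the matching arcs as $\{x_\alpha y_\alpha:\alpha<\kappa\}$ and fix an arbitrary arc $vu\in \arr{V^-V^+}$. The indices $\alpha$ for which $u\to x_\alpha$ fails are exactly those with $x_\alpha\in (N^-(u)\cap V^+)\cup\{u\}$; because the $x_\alpha$ are pairwise distinct and $|N^-(u)\cap V^+|<\kappa$ by (1), this index set has size $<\kappa$. Analogously at most $<\kappa$ indices satisfy $y_\alpha\not\to v$. Hence $\kappa$ indices yield length-3 paths $u\to x_\alpha\to y_\alpha\to v$, and these are pairwise edge-disjoint: vertex-disjointness of the $M$-arcs forces the first arcs $u\to x_\alpha$ and the last arcs $y_\alpha\to v$ to be pairwise distinct, while the middle arcs are automatically disjoint. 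Thus every arc of $\arr{V^-V^+}$ is $\kappa$-reversible, proving (a). The casework is essentially mechanical; the only delicate point, which matters for singular $\kappa$, is the observation that a set of size $<\kappa$ can contain only $<\kappa$ of the pairwise distinct $x_\alpha$, which is automatic from the vertex-disjointness of $M$.
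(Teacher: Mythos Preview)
Your proof is correct and follows essentially the same route as the paper: define $V^+=\{v_0\}\cup N^+(v_0)$, $V^-=N^-(v_0)$, verify the degree bounds via uniformity, and then use the dichotomy that either a small vertex set meets all arcs of $\arr{V^+V^-}$ or there are $\kappa$ vertex-disjoint such arcs, which yield the length-$3$ paths witnessing $\kappa$-reversibility. Your only additions are the (harmless but unnecessary) preliminary trichotomy on degrees and an explicit Zorn/maximal-matching justification for the dichotomy, which the paper leaves implicit.
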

\begin{proof}
 Pick any ${v_0}\in V$ and let $V^+=\{{v_0}\}\cup N^+({v_0})$ and $V^-=N^-({v_0})$. Then $\Delta^-(D\uhr V^+)=\kappa$ and $\Delta^+(D\uhr V^-)=\kappa$ by $\kappa$-uniformity:  If (say) $u\in D\uhr V^+$, then $|N^{+}({v_0})\Delta N^{+}(u)|<\kappa$, and since $D$ is a tournament, this means $|N^{+}({v_0})\cap N^{-}(u)|<\kappa$ and  $|V^{+}\cap N^{-}(u)|<\kappa$.
 
 If (b) fails then we can find $\kappa$ many vertex disjoint arcs $u_\xi v_\xi\in\arr{V^+V^-}$. Suppose that $uv\in \arr{V^-V^+}$. All but $<\kappa$ many $u_\xi$ are contained in $N^+(v)$ and all but $<\kappa$ many $v_\xi$ are in $N^-(u)$. So, $vu_\xi v_\xi u$ is a path for $\kappa$ many $\xi$. {\DD In turn, (a) holds.}
 
\end{proof}

Now, fix $V=V^+\cup V^-$ as in  Lemma \ref{lm:unifpart}. 

If (a) holds, then we can simply reverse all the arcs from  $\arr{V^-V^+}$ without changing $D\uhr V^+$ or $D\uhr V^-$ using Lemma \ref{lm:kapparev}. Now, strong components are contained in either $V^+$ or $V^-$, and Theorem \ref{thm:smallcomp} is proved.
%and this use of Lemma \ref{lm:kapparev} did not change $\Delta^-(D\uhr V^+)=\kappa$ and $\Delta^+(D\uhr V^-)=\kappa$.
  
 Suppose (b) holds and $W\in [V]^{<\kappa}$ pins down all arcs in $\arr{V^+V^-}$. If $\cf(\kappa)=\kappa$ then we can apply Lemma \ref{lm:degreecontroll} in $D\uhr (V^-\cup W)$ for the set $W$ with $\lambda=\kappa$ to find $W\subseteq W^*\subseteq W\cup V^-$ so that $W^*$ has size $<\kappa$ and all arcs point into $W^*$ from $V^-\setm W^*$ after reversing $<\kappa$ many cycles.
 
%\begin{comment}
 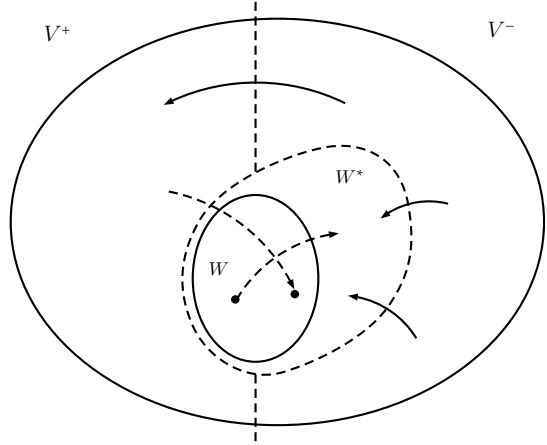
\begin{figure}[H]
  \psscalebox{0.7} % Change this value to rescale the drawing.
{
\begin{pspicture}(0,-4.323752)(10.06,4.323752)
\psellipse[linecolor=black, linewidth=0.04, dimen=outer](5.03,0.15375184)(5.03,3.87)
\psline[linecolor=black, linewidth=0.04, linestyle=dashed, dash=0.17638889cm 0.10583334cm](4.62,4.323752)(4.62,1.1037518)
\rput[bl](0.64,3.583752){\Large{$V^+$}}
\rput[bl](8.98,3.6637518){\Large{$V^-$}}
\psellipse[linecolor=black, linewidth=0.04, dimen=outer](4.62,-0.91624814)(1.2,1.6)
\psbezier[linecolor=black, linewidth=0.04, linestyle=dashed, dash=0.17638889cm 0.10583334cm](6.04,1.5837518)(5.403813,1.5068214)(4.036139,0.971995)(3.58,0.2037518310546875)(3.123861,-0.56449133)(3.183861,-1.268005)(3.46,-1.8562481)(3.7361388,-2.4444914)(4.458476,-2.8909693)(5.12,-2.696248)(5.781524,-2.501527)(6.714357,-2.0304055)(7.12,-1.4362482)(7.525643,-0.8420909)(7.6369543,-0.31279394)(7.48,0.52375185)(7.3230453,1.3602976)(6.676187,1.6606823)(6.04,1.5837518)
\rput{-152.59897}(9.189832,0.28783074){\psarc[linecolor=black, linewidth=0.04, dimen=outer, arrowsize=0.05291667cm 2.0,arrowlength=1.4,arrowinset=0.0]{->}(4.63,-0.97624815){3.77}{216.25806}{270.0}}
\rput{-119.84238}(11.941553,1.7238808){\psarc[linecolor=black, linewidth=0.04, linestyle=dashed, dash=0.17638889cm 0.10583334cm, dimen=outer, arrowsize=0.05291667cm 2.0,arrowlength=1.4,arrowinset=0.0]{<-}(6.47,-2.5962481){2.55}{216.25806}{270.0}}
\rput{-191.28943}(4.140171,-5.461706){\psarc[linecolor=black, linewidth=0.04, linestyle=dashed, dash=0.17638889cm 0.10583334cm, dimen=outer, arrowsize=0.05291667cm 2.0,arrowlength=1.4,arrowinset=0.0]{<-}(2.34,-2.5262482){3.32}{216.25806}{270.0}}
\rput[bl](3.74,-0.85624814){$W$}
\rput[bl](6.12,0.8837518){$W^*$}
\rput{-185.26646}(12.106151,-6.3892703){\psarc[linecolor=black, linewidth=0.04, dimen=outer, arrowsize=0.05291667cm 2.0,arrowlength=1.4,arrowinset=0.0]{->}(6.2,-2.916248){1.68}{216.25806}{270.0}}
\rput{-140.63493}(14.540642,3.4088078){\psarc[linecolor=black, linewidth=0.04, dimen=outer, arrowsize=0.05291667cm 2.0,arrowlength=1.4,arrowinset=0.0]{->}(7.88,-0.89624816){1.44}{216.25806}{270.0}}
\psdots[linecolor=black, dotsize=0.16](5.36,-1.2162482)
\psdots[linecolor=black, dotsize=0.16](4.24,-1.3162482)
\psline[linecolor=black, linewidth=0.04, linestyle=dashed, dash=0.17638889cm 0.10583334cm](4.62,-2.7362483)(4.62,-4)
\end{pspicture}
}

\caption{The modified partition from $V^+$ and $V^-$.}
\label{fig:finalproof}
 \end{figure}
%\end{comment}

 Now, note that all arcs point into $V^+\cup W^*$ from $V^-\setm W^*$. So, each strong component  is either in $V^+\cup W^*$ or $V^-\setm W^*$. In the first set, all in-degrees are $<\kappa$ and in the second set, all out-degrees are $<\kappa$. This is true because $|W^*|<\kappa$ and we only reversed $<\kappa$ many cycles so no in-degree or out-degree was raised to $\kappa$.
 
 Finally, suppose that $\kappa>\cf(\kappa)=\mu$. Apply Theorem \ref{thm:smalldegree} first to $D\uhr V^+\cup W$ and then to $D\uhr V^-\cup W$ to get $\la V^+_\xi\ra_{\xi<\mu}$ and $\la V^-_\xi \ra_{\xi<\mu}$. Note that in the second reversion, we might make some arcs of $W$ point backward with respect to the $\la V^+_\xi\ra_{\xi<\mu}$ decomposition but this will not really matter.

%Now let $\la\kappa_\xi\ra_{\xi<\mu}$ be a cofinal sequence in $\kappa$.

We distinguish two cases.

\textbf{Case 1.} {\DD Suppose that, for each $\xi<\mu$ and $\lambda<\kappa$, there is $w\in (V^+\cup W)\setm V^+_{<\xi}$ so that $|N^+(w)\cap V^-|\geq \lambda$.}
 
 \begin{claim}
  Each arc $uv\in \arr{V^-V^+}$ is $\kappa$-reversible.
 \end{claim}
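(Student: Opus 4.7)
The plan is to produce $\kappa$ pairwise edge-disjoint directed paths from $v$ to $u$ in $\mu$ stages, stage $\eta<\mu$ contributing $\kappa_\eta$ paths, where $\la\kappa_\eta\ra_{\eta<\mu}$ is a cofinal sequence of cardinals in $\kappa$. Each path will have the common shape
\[
v\to x\to w_\eta\to t\to u,
\]
with $x\in V^+$ in a block of $V^+$ strictly between $v$'s block and $w_\eta$'s block, $w_\eta\in V^+$ a Case 1 witness in a sufficiently high block, and $t\in V^-$ in a block of $V^-$ of index strictly larger than $u$'s. Three of the four arcs are then forced by the forward block-ordering on $V^+$ and the backward block-ordering on $V^-$ produced by Theorem \ref{thm:smalldegree}; only the middle arc $w_\eta\to t$ is a genuine $V^+\to V^-$ crossing, and this is precisely where the Case 1 hypothesis is invoked.

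Before diving in, two preliminary observations would set things up. First, in the present setting both $V^+$ and $V^-$ have cardinality $\kappa$ and each is spread across $\mu$ blocks of size $<\kappa$, as is needed for Theorem \ref{thm:smalldegree} to deliver the partitions $\la V^\pm_\xi\ra_{\xi<\mu}$. Second, because $|W\cap V^-|<\kappa$ and $\Delta^+(D\uhr V^-)\leq\kappa$, for all sufficiently large $\lambda<\kappa$ any Case 1 witness must lie in $V^+$ rather than in $W\cap V^-$. These two observations together yield a steady supply of ``gateway'' vertices $w\in V^+$ in cofinally many blocks, each with as many out-neighbours in $V^-$ as we wish.

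Writing $v\in V^+_{\rho_v}$, $u\in V^-_{\sigma_u}$, and $A_\eta$ for the set of vertices used in stages $\eta'<\eta$ (so $|A_\eta|<\kappa$), stage $\eta$ would proceed as follows. First choose $\xi_\eta<\mu$ so large that the union of blocks $V^+_\zeta$ with $\rho_v<\zeta<\xi_\eta$ contains at least $\kappa_\eta$ elements outside $A_\eta$; this is possible since $|V^+|=\kappa$ is the sum of $\mu$ blocks of size $<\kappa$. Next apply Case 1, with threshold $\lambda_\eta<\kappa$ chosen so large that a witness survives the subtractions made below, to produce $w_\eta\in V^+\setm V^+_{<\xi_\eta}\setm A_\eta$ with $|N^+(w_\eta)\cap V^-|\geq\lambda_\eta$. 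Finally, pick $\kappa_\eta$ distinct vertices $x_{\eta,\alpha}$ from the prepared band and $\kappa_\eta$ distinct vertices $t_{\eta,\alpha}\in N^+(w_\eta)\cap V^-$ lying in blocks $V^-_\sigma$ with $\sigma>\sigma_u$, all disjoint from $A_\eta$. All four arcs of $P_{\eta,\alpha}:=v\to x_{\eta,\alpha}\to w_\eta\to t_{\eta,\alpha}\to u$ are then present by the block-orderings; edge-disjointness within a stage follows from the distinctness of $x_{\eta,\alpha}$ and $t_{\eta,\alpha}$, while across stages it follows from the distinctness of the $w_\eta$ together with $A_\eta$-avoidance. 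Summing yields $\sum_{\eta<\mu}\kappa_\eta=\kappa$ edge-disjoint paths from $v$ to $u$.

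The hard part will be ensuring enough intermediate $x$'s in the narrow band between $v$'s block and $\xi_\eta$ at each stage: this is the step where the singular structure of $\kappa$ really bites, relying on $|V^+|=\kappa$ being spread across cofinally many blocks of size $<\kappa$. Everything else reduces to routine cardinal-arithmetic bookkeeping against the growing $A_\eta$.
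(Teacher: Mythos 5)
Your plan is essentially the paper's own argument in a different packaging: the same five-vertex path $v\to x\to w\to t\to u$, with the single genuine $V^+\to V^-$ crossing $w\to t$ supplied by the Case 1 hypothesis and the other three arcs read off from the two block decompositions; the paper counts by showing $uv$ is $\lambda$-reversible for cofinally many $\lambda<\kappa$ (and then passes to $\kappa$), while you accumulate the $\kappa$ edge-disjoint paths directly in $\mu$ stages against a bookkeeping set $A_\eta$. That organisational difference is fine, but as written your proof has concrete gaps. First, your second preliminary observation is not justified: for $w\in W\cap V^-$ we only know $|N^+(w)\cap V^-|<\kappa$, and since $|W|$ may well have size $\geq\cf(\kappa)$, the supremum of these $<\kappa$ many cardinals can be $\kappa$; so there need not exist any threshold $\lambda<\kappa$ beyond which Case 1 witnesses lie in $V^+$. (Luckily the claim is unnecessary: a witness $w\in W$ serves equally well, since all that is used is its position in the $\la V^+_\xi\ra_{\xi<\mu}$ decomposition of $V^+\cup W$ and the set $N^+(w)\cap V^-$.)

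Second, and more importantly, the assertion that the arcs $v\to x$ and $x\to w$ are ``forced by the block-orderings'' is exactly where care is needed and where your write-up is silent. The decomposition $\la V^+_\xi\ra_{\xi<\mu}$ is produced first, and the second application of Theorem \ref{thm:smalldegree} (to $D\uhr V^-\cup W$) may reverse arcs with both endpoints in $W$, so arcs inside $W$ need not respect the $V^+$ block ordering any more. This is why the paper insists on $x\in V^+_\xi\setm W$: then $vx$ and $xw$ each have an endpoint outside $V^-\cup W$ and were untouched by the second reversal (whereas $t\to u$ is safe because the $V^-$ decomposition is computed last). You never exclude $x\in W$, so your ``three of the four arcs are forced'' step has a hole; it is easily patched, since $|W|<\kappa$ and the band can be chosen to contain $\kappa_\eta$ vertices outside $W\cup A_\eta$. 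Finally, your cross-stage edge-disjointness leans on choosing the $w_\eta$ pairwise distinct and outside $A_\eta$; Case 1 only gives \emph{some} witness above each $V^+_{<\xi}$, and since $|A_\eta|$ can be $\geq\cf(\kappa)$ you cannot in general push the witness out of $A_\eta$ by raising the threshold $\lambda_\eta$. This extra demand is also unnecessary (every arc of a stage-$\eta$ path contains a fresh $x$ or $t$, so reusing a $w$ creates no coincident arcs), but as stated your argument requires something you have not shown you can obtain.
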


\begin{proof} Suppose that we fix an arc $uv\in\arr{V^-V^+}$ and  $\lambda<\kappa$. Let $\xi<\mu$ be large enough so that $|V^+_\xi|\geq \max\{\lambda, |W|^+$\}, $v\in V^+_{<\xi},u\in V^-_{<\xi}$. Now select $w\in (V^+\cup W)\setm V^+_{<\xi}$  so that $(N^+(w)\cap V^-)\setm V^-_{<\xi}$ has size at least $\lambda$.

The above choices make sure that $vxw y u$ is a path for all $x\in V^+_\xi\setm W$ and $y\in (N^+(w)\cap V^-)\setm V^-_{<\xi}$; see Figure \ref{fig:paths}. Distinct choices of $x$ and $y$ give edge-disjoint paths from $v$ to $u$ so we can find $\lambda$ many of these.  Finally, since $uv$ is $\lambda$-reversible for cofinally many $\lambda<\kappa$, then $uv$ is also $\kappa$-reversible.
\end{proof}

\begin{figure}[H]
 \psscalebox{0.7} % Change this value to rescale the drawing.
{
\begin{pspicture}(0,-4.16)(10.96,4.16)
\psellipse[linecolor=black, linewidth=0.04, dimen=outer](6.06,-0.02)(4.9,3.5)
\psline[linecolor=black, linewidth=0.04, linestyle=dashed, dash=0.17638889cm 0.10583334cm](6.0,3.86)(6.0,-4.16)
\psline[linecolor=black, linewidth=0.03](5.14,3.38)(4.74,1.96)(5.2,0.4)(4.72,-0.9)(4.92,-1.98)(4.7,-3.34)
\psline[linecolor=black, linewidth=0.03](7.2,3.36)(7.62,1.66)(7.08,-0.38)(7.48,-1.7)(7.2,-3.4)
\rput[bl](6.5,3.8){\Large{$W$}}
\rput[bl](10.08,2.98){\Large{$V^-$}}
\rput[bl](1.56,3.04){\Large{$V^+$}}
\rput{-126.931145}(11.365342,4.31508){\psarc[linecolor=black, linewidth=0.04, dimen=outer, arrowsize=0.05291667cm 2.0,arrowlength=1.4,arrowinset=0.0]{<-}(6.76,-0.68){2.18}{0.0}{73.94481}}
\psdots[linecolor=black, dotsize=0.2](8.08,-2.42)
\psdots[linecolor=black, dotsize=0.2](5.36,-2.36)
\rput[bl](8.22,-2.06){$u$}
\rput[bl](5.24,-1.9){$v$}
\rput{-150.70508}(17.708078,-8.371724){\psarc[linecolor=black, linewidth=0.04, linestyle=dashed, dash=0.17638889cm 0.10583334cm, dimen=outer](7.76,-6.5){6.06}{211.97797}{270.0}}
\rput[bl](8.8,-1.66){\Large{$V^-_{<\xi}$}}
\rput[bl](2.24,-1.76){\Large{$V^+_{<\xi}$}}
\rput{-150.70508}(9.422273,-7.7373457){\psarc[linecolor=black, linewidth=0.04, linestyle=dashed, dash=0.17638889cm 0.10583334cm, dimen=outer](3.7,-5.1){4.84}{211.97797}{270.0}}
\rput{-150.70508}(8.644491,-5.2206306){\psarc[linecolor=black, linewidth=0.04, linestyle=dashed, dash=0.17638889cm 0.10583334cm, dimen=outer](3.64,-3.74){4.84}{211.97797}{270.0}}
\psdots[linecolor=black, dotsize=0.2](2.7,0.32)
\psdots[linecolor=black, dotsize=0.2](3.44,1.76)
\psline[linecolor=black, linewidth=0.04, arrowsize=0.05291667cm 2.0,arrowlength=1.4,arrowinset=0.0]{->}(2.72,0.34)(3.38,1.58)
\psline[linecolor=black, linewidth=0.04, arrowsize=0.05291667cm 2.0,arrowlength=1.4,arrowinset=0.0]{->}(5.38,-2.42)(2.84,0.18)
\rput[bl](1.94,0.08){$x$}
\rput[bl](3.12,2.12){$w$}
\psellipse[linecolor=black, linewidth=0.04, linestyle=dashed, dash=0.17638889cm 0.10583334cm, dimen=outer](6.82,1.54)(0.46,0.98)
\psline[linecolor=black, linewidth=0.04, arrowsize=0.05291667cm 2.0,arrowlength=1.4,arrowinset=0.0]{->}(3.4524372,1.7423453)(6.687563,1.4976547)
\psline[linecolor=black, linewidth=0.04, arrowsize=0.05291667cm 2.0,arrowlength=1.4,arrowinset=0.0]{->}(6.86,1.48)(7.98,-2.24)
\rput[bl](0.0,-0.32){\Large{$V^+_{\xi}$}}
\psdots[linecolor=black, dotsize=0.2](6.84,1.5)
\rput[bl](6.76,1.84){$y$}
\end{pspicture}
}\caption{Constructing the $v\to u$ paths in Case 1.}
\label{fig:paths}
\end{figure}
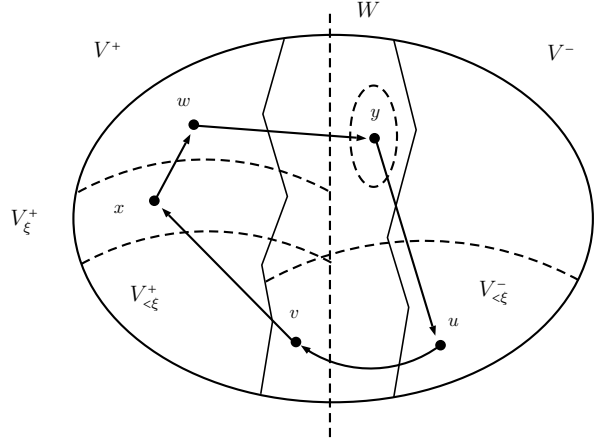

 Now, we can again make each arc between $V^+$ and $V^-$ point into $V^-$ without changing anything in $D\uhr V^+$ or $D\uhr V^-$ via Lemma \ref{lm:kapparev}. So strong components will be contained in $V^+$ or $V^-$.

 \textbf{Case 2.} If Case 1 fails, then there is some $\xi_0<\mu$ and $\lambda<\kappa$ so that $|N^+(v)\cap V^-|\leq \lambda$ for all $v\in (V^+\cup W)\setm V^+_{<\xi_0}$. We can suppose that $\lambda$ is regular and bigger than $|W|$ and $|V^+_{\xi_0}|$.
 
 Consider the set $V^*=V^-\cup W\cup V^+_{<\xi_0}$. We can apply Lemma \ref{lm:degreecontroll} in $D\uhr V^*$ for $W\setm V^+_{<\xi_0}$ and $\lambda$. So, we can find a set $W^*$ of size $<\lambda$ with $W\setm V^+_{<\xi_0}\subseteq W^*\subseteq V^*$ so that all arcs point into $W^*$ from $V^*\setm W^*$ after reversing $<\lambda$ many cycles $\mc C^*$. 
 
 We claim that  now all arcs point from $V^{--}=V^*\setm W^*$ to $V^{++}=V\setm (V^*\setm W^*)=(V^+\setm V^+_{<\xi_0})\cup W^*$. Indeed, suppose $u\in   V^{--}$ and $v\in V^{++}$. If $v\in W^*$ then $uv$ is an arc by the construction of $W^*$. 
 
 On the other hand, suppose $v\in V^+\setm (W^*\cup V^+_{<\xi_0})$ so in particular, $v\notin W$ and $v\notin V^+_{<\xi_0}$. Note that $V^{--}\subseteq (V^-\setm W)\cup V^+_{<\xi_0}$. If $u\in V^-\setm W$ then $u,v\notin W$ and so $uv$ is an arc since $W$ pinned down all arcs from $V^+$ to $V^-$ and arcs meeting $v$ were not used in $\mc C^*$ (Since each cycle in $\mc C^*$ is contained in $D\uhr V^*$). Finally, if $u\in V^+_{<\xi_0}$ then $uv$ is an arc by the definition of the $\la V^+_\xi\ra_{\xi<\mu}$ decomposition (this again was not changed when reversing the cycles of $\mc C^*$).
 
 Hence, we showed that all arcs point from $V^{--}$ to $V^{++}$ and so strong components are either in $V^{--}$ or $V^{++}$. Note that $|V^-\Delta V^{--}|<\kappa$ and $|V^+\Delta V^{++}|<\kappa$ and reversing $\mc C^*$ only changed $<\kappa$ many arcs. So out-degrees are $<\kappa$ in $V^{--}$ since out-degrees were $<\kappa$ in $V^-$.   Similarly, in-degrees are $<\kappa$ in $V^{++}$ since in-degrees were $<\kappa$ in $V^+$. 
 
\end{proof}

\begin{cor}\label{cor:third}
  If $D\in \mf {UT}_\kappa$ then there is $\mc C\in \rs D$ so that strong components of $\rev{D}{\mc C}$ have size $<\kappa$.
\end{cor}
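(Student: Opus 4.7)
The plan is to combine Theorem \ref{thm:smallcomp} with Corollary \ref{cor:second} in a straightforward two-stage reversal. The first stage reorganizes $D$ so that its strong components split into two well-behaved halves; the second stage collapses each half independently using the small-degree machinery of Part II.

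Concretely, I would first invoke Theorem \ref{thm:smallcomp} to produce $\mathcal C_0 \in \rs D$ together with a partition $V = V^+ \cup V^-$ such that $\Delta^-((\rev{D}{\mathcal C_0})\uhr V^+) \leq \kappa$, $\Delta^+((\rev{D}{\mathcal C_0})\uhr V^-) \leq \kappa$, and every strong component of $D_0 := \rev{D}{\mathcal C_0}$ is contained in either $V^+$ or $V^-$. This already confines strong components to the two halves; it remains only to shrink them within each half.

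Next I would apply Corollary \ref{cor:second} separately to $D_0 \uhr V^+$ (where in-degrees are $<\kappa$) and to $D_0 \uhr V^-$ (where out-degrees are $<\kappa$) to obtain $\mathcal C_1 \in \rs{D_0 \uhr V^+}$ and $\mathcal C_2 \in \rs{D_0 \uhr V^-}$ making strong components of size $<\kappa$ in each half. If one of $V^+, V^-$ has size less than $\kappa$ already, no reversals are needed there. Finally, set $\mathcal C = \mathcal C_0 \smf \mathcal C_1 \smf \mathcal C_2$. Since every directed cycle used in $\mathcal C_1$ lives inside a strong component of $D_0$ contained in $V^+$ (and similarly for $\mathcal C_2$ in $V^-$), the sequences $\mathcal C_1$ and $\mathcal C_2$ are edge-disjoint and each is locally finite, so their concatenation is a valid element of $\rs{D_0}$, and hence $\mathcal C \in \rs D$.

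The only point requiring a moment's care is verifying that the strong component structure carries through the composition: reversing cycles inside $V^+$ does not alter any arc of $D_0$ incident to $V^-$, and vice versa, so after all reversals the strong components of $\rev{D}{\mathcal C}$ are precisely the strong components obtained inside $V^+$ and $V^-$ after stage two — all of size $<\kappa$. I do not expect a real obstacle here; the work has been done in Theorem \ref{thm:smallcomp} and Corollary \ref{cor:second}, and this corollary is essentially their concatenation.
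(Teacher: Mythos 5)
Your proposal is correct and is essentially the paper's own proof, which simply applies Theorem \ref{thm:smallcomp} and then Corollary \ref{cor:second}; you have merely spelled out the routine verifications (edge-disjointness of the two stage-two sequences, local finiteness of the concatenation, and the fact that strong components of the final digraph sit inside the halves) that the paper leaves implicit. No gap here.
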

\begin{proof}
 Apply Theorem \ref{thm:smallcomp} first and then Corollary \ref{cor:second}.
\end{proof}

\begin{cor}\label{cor:final} 
  If $D\in \mf T_\kappa$ then there is $\mc C\in \rs D$ so that strong components of $\rev{D}{\mc C}$ have size $<\kappa$.
\end{cor}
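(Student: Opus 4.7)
The plan is to combine Corollaries \ref{cor:first} and \ref{cor:third} in a two-stage reduction. First, I would apply Corollary \ref{cor:first} to $D$ to obtain $\mc C_0 \in \rs D$ so that each strong component of $D' := \rev{D}{\mc C_0}$ is $\kappa$-uniform. List these strong components as $\{S_i : i \in I\}$. For each $i$ with $|S_i| = \kappa$ the induced tournament $D' \uhr S_i$ lies in $\mf{UT}_\kappa$, so Corollary \ref{cor:third} supplies a sequence $\mc C_i \in \rs{D' \uhr S_i}$ whose effect is to make every strong component of $\rev{(D' \uhr S_i)}{\mc C_i}$ have size $<\kappa$. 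For $|S_i| < \kappa$ simply set $\mc C_i = \emptyset$.

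The next step is to paste these $\mc C_i$ together into a single reversal sequence $\mc C_1 \in \rs{D'}$. The key observation is that every cycle used in $\mc C_i$ lies entirely within $S_i$, so the $\mc C_i$ have pairwise edge-disjoint supports. Well-order $I$ and concatenate the $\mc C_i$ transfinitely in this order; since no reversal $\mc C_j$ with $j \neq i$ ever alters an arc of $S_i$, the cycles listed in $\mc C_i$ are still cycles in the digraph reached just before we start reversing $\mc C_i$. Local finiteness of the resulting sequence is immediate: each arc is touched in only one $\mc C_i$ (the one corresponding to the strong component containing both endpoints) and, by local finiteness of that $\mc C_i$, only finitely often. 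Set $\mc C := \mc C_0 \smf \mc C_1 \in \rs D$.

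To finish, I would check that the strong components of $\rev{D}{\mc C}$ have size $<\kappa$. Arcs running between distinct $S_i$ and $S_j$ are untouched by $\mc C_1$ and, being arcs between different strong components of $D'$, already point in a single direction. Hence no directed cycle in $\rev{D}{\mc C}$ can meet two distinct $S_i$'s, so each strong component of $\rev{D}{\mc C}$ is contained in some $S_i$, and within each such $S_i$ the reversal $\mc C_i$ has already ensured that its strong components are of size $<\kappa$.

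The main obstacle, though largely bookkeeping, is justifying the transfinite concatenation as a legitimate element of $\rs D$: one must verify both that each prescribed cycle is still a cycle of the current digraph when it is reversed, and that the overall sequence is locally finite so the limit digraphs are well-defined. Both facts reduce to edge-disjointness of the supports of the $\mc C_i$, which itself rests on the structural fact that every directed cycle is contained in a single strong component.
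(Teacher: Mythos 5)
Your proof is correct and follows exactly the paper's route: the paper's own proof is the one-line ``apply Corollary \ref{cor:first} first and then Corollary \ref{cor:third}'', with the component-by-component pasting left implicit. Your additional bookkeeping (edge-disjointness of the $\mc C_i$, local finiteness, and that cycles stay inside strong components so the new components sit inside the old ones) is precisely the verification the paper omits, and it is carried out correctly.
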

\begin{proof}
 Apply Corollary \ref{cor:first} first and then Corollary \ref{cor:third}.
\end{proof}

Now, the Main Theorem follows from Corollary \ref{cor:final} by induction on $\kappa$.

% 
% \begin{proof}
%  First, apply Corollary \ref{cor:first} and then Theorem \ref{thm:smallcomp} in each $\kappa$-uniform strong component of size $\kappa$.
% \end{proof}

 \section{Appendix A: finite tournaments and the {\DD dichromatic} number}\label{sec:finite}
 
 As we mentioned,  Thomass\'e and Charbit already had different arguments to show that  $\dchr{\rev{D}{\mc C}}\leq 2$ for some $\mc C\in \rs D$ whenever $D$ is a finite digraph. {\DD Furthermore, in \cite{gyarfas_rev}, the authors proved that if $D$ is a finite tournament on vertices $v_1,v_2\dots v_n$ then one can find $\mc C\in \rs D$ so that both $\{v_i:1\leq i\leq n \textmd{ is even}\}$ and $\{v_i:1\leq i\leq n \textmd{ is odd}\}$ are acyclic in $\rev{D}{\mc C}$.}
 
 Now, we present yet another variation.

 \begin{theorem}\label{thm:finite}
  Suppose that $D$ is a finite tournament and $W_0$ is an acylic set of vertices. Then there is a $W\supseteq W_0$ and $\mc C\in \rs D$ so that both $W$ and $V \setm W$ are acyclic in $\rev{D}{\mc C}$. 
  
  %In particular, $\dchr{\rev{D}{\mc C}}\leq 2$ for some $\mc C\in \rs D$.
 \end{theorem}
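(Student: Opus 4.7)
My plan is to proceed by induction on $n=|V(D)\setm W_0|$. The base case $n=0$, i.e., $V=W_0$, is immediate: take $W=V$ and $\mc C=\emptyset$.

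For the inductive step, I first attempt to enlarge $W_0$ for free. If there exists $v\in V\setm W_0$ with $W_0\cup\{v\}$ still acyclic, then I apply the inductive hypothesis to $(D,W_0\cup\{v\})$ and conclude. Otherwise every $v\in V\setm W_0$ closes a cycle with $W_0$; since $W_0$ is acyclic and $D$ is a tournament, such a cycle can always be taken to be a 3-cycle $v\to w_i\to w_j\to v$ with $w_i,w_j\in W_0$ and $w_i\to w_j$ in the topological order of $W_0$. In this case every vertex outside $W_0$ is forced to lie in $V\setm W$.

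Pick an arbitrary $v\in V\setm W_0$, and apply the inductive hypothesis to $(D\setm\{v\},W_0)$, which has strictly smaller parameter $n$. This yields $W_0\subseteq W^*\subseteq V\setm\{v\}$ and $\mc C^*\in \rs{D\setm\{v\}}\subseteq \rs D$ such that both $W^*$ and $V\setm\{v\}\setm W^*$ are acyclic in $\rev{D}{\mc C^*}$. Setting $W=W^*$, the only remaining question is whether $V\setm W^*=(V\setm\{v\}\setm W^*)\cup\{v\}$ is acyclic in $\rev{D}{\mc C^*}$. If so, take $\mc C=\mc C^*$ and conclude.

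If $V\setm W^*$ still contains a cycle (necessarily through $v$), I would perform further cycle reversals to destroy these cycles while keeping $W^*$ acyclic. The main obstacle is finding a sequence of cycles whose reversals eliminate the bad cycles through $v$ without disturbing any arc inside $W^*$: the naive strategy of reversing cycles entirely inside $V\setm W^*$ is insufficient, since this would amount to making $V\setm W^*$ acyclic via cycle reversals, which is generally impossible (Charbit's theorem only provides two acyclic parts). A more flexible attack is to choose $v$ at the outset via a minimality principle that controls the complexity of cycles through $v$, or else to bypass the multi-stage induction altogether and adapt Charbit's argument from \cite{charbit} directly: minimize a suitable potential function over linear orders on $V$ that extend the topological order of $W_0$, and read off the desired partition with $W_0\subseteq W$ from the minimizer.
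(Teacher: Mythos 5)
The heart of the theorem is precisely the step you leave open, so as it stands the proposal has a genuine gap. Your induction works smoothly only in the easy cases: if some $v$ extends $W_0$ acyclically, or if adding the deleted vertex $v$ back to $V\setm\{v\}\setm W^*$ happens not to create a cycle. In the remaining case you must destroy the cycles of $(\rev{D}{\mc C^*})\uhr (V\setm W^*)$ through $v$ without disturbing the acyclicity of $W^*$, and you correctly observe that reversing cycles inside $V\setm W^*$ alone cannot work; but you offer no mechanism for doing this, and your two suggested fallbacks do not obviously close the hole. In particular, adapting Charbit's argument is problematic: his potential function (total arc length on a circular placement) is used to make every cycle ``good'' and then an independent theorem yields a $2$-colouring, with no control whatsoever over which colour class ends up containing $W_0$; minimizing only over linear orders extending the order of $W_0$ does not by itself produce a partition in which $W_0$ lies inside one acyclic part. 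Also note that the set $W^*$ produced by your induction is maximal only relative to $D\setm\{v\}$, so you cannot exploit any maximality of $W^*$ inside $D$ when trying to repair the complement.

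For comparison, the paper avoids induction entirely and supplies exactly the missing ingredient. It takes $W$ to be a \emph{maximal} superset of $W_0$ that can be made acyclic by some reversal sequence $\mc C_0$; maximality guarantees that for any offending vertex $y\in V\setm W$ there are $\tri_W$-consecutive $u\tri_W v$ in $W$ with $yuv$ a directed $3$-cycle. It then minimizes, over linear orders $\tri$ on $V\setm W$ and further reversal sequences keeping $W$ acyclic, first the number of $\tri$-backward arcs and then the $\tri$-distance $k$ of the shortest backward arc, and shows that any backward arc allows a swap of consecutive elements, or the reversal of a specific $3$-cycle or $5$-cycle through the pair $u,v$, which strictly decreases this potential while preserving the acyclicity of $W$. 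Some argument of this extremal type (or another concrete device for killing the cycles through $v$ while protecting $W^*$) is what your inductive step still needs.
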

\begin{proof}
 Fix $D=(V,A)$ and consider $$\mc W=\{W\subseteq V:W_0\subseteq W \textmd{ and } (\rev{D}{\mc C})\uhr W \textmd{ is acyclic for some }\mc C\in \rs D\}.$$ Select a maximal $W\in \mc W$ and fix $\mc C_0\in \rs D$ so that $(\rev{D}{\mc C_0})\uhr W$ is acyclic. In turn, any further cycle reversion which keeps $W$ acyclic also preserves its maximality. Our goal will be to make the complement of $W$ acyclic by further cycle reversions.
 
 Now, let $\tri_W$ be a linear order on $W$ so that all arcs $uv$ in $(\rev{D}{\mc C_0})\uhr W$ satisfy $u\tri_W v$. Also, find a linear order $\tri$ on $V\setm W$ and $\mc C_1\in \rs{\rev{D}{\mc C_0}}$ so that 
 \begin{enumerate}
 \item $(\rev{D}{\mc C})\uhr W$ is still acyclic,
  \item  the size of the set of $\tri$-backward arcs $$F_{\tri,\mc C}=\{yx\in A((\rev{D}{\mc C})\uhr V\setm W):x\tri y\}$$ is minimal, and 
  \item  the size $k=k_{\tri,\mc C}$ of the smallest sequence of $\tri$-successors $x=x_0\tri x_1\tri \dots \tri x_{k}=y$ for some arc $yx\in F_{\tri,\mc C}$ is also minimal
 \end{enumerate}
 where $\mc C=\mc C_0\smf \mc C_1$. 
 
 We would like to show that $F=F_{\tri,\mc C}=\emptyset$ which in turn yields that $(\rev{D}{\mc C})\uhr V\setm W$ is also acyclic. Suppose that $F\neq \emptyset$ and let $yx\in F$ so that the sequence of $\tri$-successors $x=x_0\tri x_1\tri \dots \tri x_{k}=y$ between $x$ and $y$ is minimal. 
 
 First, if $k=1$ i.e. $y$ is the $\tri$-successor of $x$ then we can modify $\tri$ into $\tri^*$ by switching the order of $x$ and $y$. That is, $\tri^*=\tri\setm \{(x,y)\}\cup \{(y,x)\}$ and it is easy to see that $F_{\tri^*,\mc C}=F_{\tri,\mc C}\setm \{yx\}$. This contradicts the minimality of $F$.
 
 Second, suppose that $k>1$; in turn, $xx_{k-1}$ and $x_{k-1}y$ are arcs in $\rev{D}{\mc C}$. Since $W$ is maximal acyclic in $\rev{D}{\mc C}$ as well, we can find $\tri_W$-successors $u\tri_W v\in W$ so that $yuv$ is a directed 3-cycle.  
 
 \begin{claim}
   $x_{k-1}u\in A(\rev{D}{\mc C})$.
 \end{claim}
\begin{proof}Indeed, otherwise, we can
 \begin{enumerate}[(i)]
  \item   define $\mc C^*$ by adding the directed 3-cycle $yux_{k-1}$ to $\mc C$,
  \item define  $\tri^*=\tri\setm \{(x_{k-1},y)\}\cup \{(y,x_{k-1})\}$.
\end{enumerate}
Now, it is easy to see that $F_{\tri^*,\mc C^*}=F_{\tri,\mc C}$ but $k_{\tri^*,\mc C^*}=k-1$ which contradicts the minimality of $k$.
 \end{proof}
 
 \begin{figure}[H]

 \psscalebox{0.6} % Change this value to rescale the drawing.
{
\begin{pspicture}(0,-2.5488892)(15.36,2.5488892)
\psline[linecolor=black, linewidth=0.04, linestyle=dashed, dash=0.17638889cm 0.10583334cm, arrowsize=0.05291667cm 2.0,arrowlength=1.4,arrowinset=0.0]{->}(2.6,-1.2411108)(11.6,-1.2411108)
\psline[linecolor=black, linewidth=0.04, linestyle=dashed, dash=0.17638889cm 0.10583334cm, arrowsize=0.05291667cm 2.0,arrowlength=1.4,arrowinset=0.0]{->}(8.98,1.9788891)(11.6,1.9588891)
\rput[bl](0.6,1.9588891){\huge{$W$}}
\rput[bl](0.0,-1.4411108){\huge{$V\setminus W$}}
\psdots[linecolor=black, dotsize=0.2](7.4,1.9588891)
\psdots[linecolor=black, dotsize=0.2](8.8,1.9588891)
\psline[linecolor=black, linewidth=0.04, arrowsize=0.05291667cm 2.0,arrowlength=1.4,arrowinset=0.0]{->}(7.4,1.9588891)(8.6,1.9588891)
\rput[bl](7.2,2.358889){\huge{$u$}}
\rput[bl](8.8,2.358889){\huge{$v$}}
\psdots[linecolor=black, dotsize=0.2](4.2,-1.2411108)
\psdots[linecolor=black, dotsize=0.2](7.6,-1.2411108)
\rput[bl](12.4,1.9588891){\huge{$\tri_W$}}
\rput[bl](12.4,-1.0411109){\huge{$\tri$}}
\psline[linecolor=black, linewidth=0.04, arrowsize=0.05291667cm 2.0,arrowlength=1.4,arrowinset=0.0]{->}(7.6,-1.2411108)(8.8,-1.2411108)
\psdots[linecolor=black, dotsize=0.2](9.0,-1.2411108)
\rput[bl](9.2,-2.2){\huge{$y=x_k$}}
\rput[bl](6.6,-1.9){\huge{$x_{k-1}$}}
\rput[bl](2.8,-2.2){\huge{$x=x_0$}}
\psline[linecolor=black, linewidth=0.04, arrowsize=0.05291667cm 2.0,arrowlength=1.4,arrowinset=0.0]{->}(8.8,1.9588891)(9.0,-1.0411109)
\psline[linecolor=black, linewidth=0.04, arrowsize=0.05291667cm 2.0,arrowlength=1.4,arrowinset=0.0]{->}(9.0,-1.2411108)(7.5,1.7988892)
\rput{-128.21068}(9.591154,7.433888){\psarc[linecolor=black, linewidth=0.04, dimen=outer, arrowsize=0.05291667cm 2.0,arrowlength=1.4,arrowinset=0.0]{<-}(6.6,1.3888892){3.61}{0.0}{79.723976}}
\psline[linecolor=black, linewidth=0.04, linestyle=dashed, dash=0.17638889cm 0.10583334cm, arrowsize=0.05291667cm 2.0,arrowlength=1.4,arrowinset=0.0]{->}(7.6,-1.2011108)(7.4,1.7788892)
\psline[linecolor=black, linewidth=0.04, linestyle=dashed, dash=0.17638889cm 0.10583334cm](2.6,1.9588891)(7.4,1.9588891)
\rput{-308.16718}(-0.1843013,-5.8429446){\psarc[linecolor=black, linewidth=0.04, dimen=outer, arrowsize=0.05291667cm 2.0,arrowlength=1.4,arrowinset=0.0]{<-}(5.92,-3.111111){2.55}{0.0}{79.723976}}
\end{pspicture}
}
 \caption{How to make $V\setm W$ acyclic}
\label{fig:finiteproof}
 
 \end{figure}
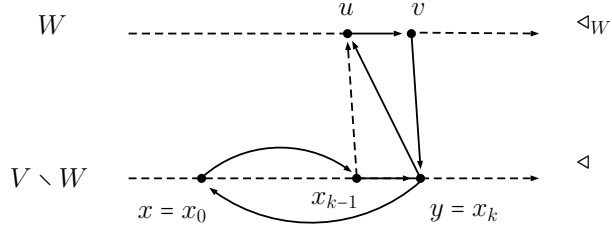
 
Finally, consider the directed 5-cycle $C^*=yxx_{k-1}uv$ and let   $\mc C^*=\mc C\smf \la C^*\ra$. On one hand, $W$ remains acyclic in $\rev{D}{\mc C^*}$ since $u,v$ were $\tri_W$-successors. Furthermore,  $$F_{\tri,\mc C^*}=\left(F_{\tri,\mc C}\setm \{yx\}\right) \cup \{x_{k-1}x\}$$ however $k_{\tri,\mc C^*}=k-1$ which again contradicts the minimality of $k$. 

This contradiction proves that $F=\emptyset$ to start with, and so $\rev{D}{\mc C}$ is covered by the two acyclic sets $W$ and $V\setm W$.  
 
 % 
%  we can define $\mc C^*$ by adding the directed 3-cycle $yux_{k-1}$ to $\mc C$ and now 
%  

\end{proof}

\subsection*{On the number of cycles}

{\DD Now, let us recall Charbit's elegant argument for proving Theorem \ref{thm:charbit} \cite{charbit}; we do this in part so that we can reflect on the number of cycles required in such a reversal sequence for arbitrary digraphs too.}

Given a finite digraph $D$, let us arrange the vertices of $D$ evenly along a circle of perimeter 1. Now, each arc $a$ of $D$ has a length $\ell(a)$ i.e. the length of the arc on the circle connecting two vertices, and each arc either points forward or backward. Let $\sigma(D)=\sum_{a\in A}\ell(a)$ and note that $\frac{m}{n}\leq \sigma(D)\leq m$.

Now, we will say that a cycle $C\in \mb D$ is good if $C$ has at least as many forward arcs as backward arcs. First, if $C\in \mb D$ is not good than reversing $C$ lowers $\sigma(D)$ by at least $\frac{1}{n}$, that is,  $\sigma(\rev{D}{C})\leq \sigma(D)-\frac{1}{n}$. Since this reduction is only possible $(m-\frac{m}{n})/\frac{1}{n}=(n-1)m$ times, there is some $\mc C\in \rs D$ of size at most $(n-1)m$ so that all cycles of $\rev{D}{\mc C}$ are good.

The final piece of the proof is the following theorem.

\begin{theorem}\label{charbitthm}\cite[Theorem 4.4]{charbit}
If each cycle of $D$ is good then $\dchr D\leq 2$.
\end{theorem}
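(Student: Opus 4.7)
The plan is to first extract a structural consequence of the goodness hypothesis and then use it to build a $2$-partition.

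The key observation is that the spanning sub-digraph $D^{\ell > 1/2}$ consisting of all backward (long) arcs is itself acyclic: any cycle $C$ contained entirely in $D^{\ell > 1/2}$ would have $k_+ = 0$ and $k_- = |C| \geq 1$, contradicting goodness. Consequently, $D^{\ell > 1/2}$ admits a topological ordering $v_1 \prec v_2 \prec \cdots \prec v_n$ along which every long arc of $D$ points forward. This already gives a linear structure on $V$ that is compatible with all the long arcs; the remaining short arcs can still point either way, and it is these that must be tamed.

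With this ordering in hand, I would attempt to construct the desired partition $V = V^+ \sqcup V^-$ using the circular geometry. The natural candidate is to pick a cut point $p$ on the circle (between two consecutive vertices) and let $V^+$, $V^-$ be the vertex sets in the two half-circles of length $1/2$ bounded by $p$ and its antipode. For any cycle $C$ confined to a single half-circle, a direct computation using the fact that $\sum_{a \in C} \ell(a)$ is a positive integer yields $\sum_{a \in C} \ell(a) = k_-$, where $k_-$ counts the arcs of $C$ that wrap around the excluded half; goodness $k_+ \geq k_-$ then pins down the possible structure of $C$. The main task is to choose $p$ so that no such cycle survives on either side, for instance by minimizing a suitable potential like the number of arcs crossing $p$, or via an averaging argument over all choices of $p$.

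The main obstacle is converting the global good-cycle hypothesis into a local property of the $2$-partition, since the goodness is a condition about each individual cycle, not directly about how cycles distribute across a cut. If the geometric cut argument does not close, my fallback is induction on $|V|$: delete the $\prec$-maximal vertex $v^{\ast}$ (a sink of the backward sub-digraph), apply the inductive hypothesis to $D - v^{\ast}$ to obtain a $2$-acyclic partition $V_1 \sqcup V_2$, and then place $v^{\ast}$ into whichever class does not create a new cycle through $v^{\ast}$. The delicate point is showing that such a class exists: if both $V_1 \cup \{v^{\ast}\}$ and $V_2 \cup \{v^{\ast}\}$ contained a cycle through $v^{\ast}$, I would attempt to splice these two cycles into a single cycle of $D$ violating goodness, exploiting that every arc leaving $v^{\ast}$ is necessarily short by the choice of $\prec$. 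This splicing step, where the goodness hypothesis must be leveraged precisely, is the crucial and most subtle piece of the argument.
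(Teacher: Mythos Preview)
The paper does not prove this statement; it is quoted from Charbit's thesis and used as a black box, so there is no proof in the paper to compare against.

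Assessing your sketch on its own terms: the opening observation that the backward-arc subdigraph is acyclic is correct, and your computation that a cycle confined to one half-circle has $\sum_{a\in C}\ell(a)=k_-$ is also correct. But neither of your two strategies closes, and the gaps are structural rather than cosmetic. For the semicircle cut, goodness ($k_+\ge k_-$) is perfectly compatible with a cycle living inside one half (three vertices in $[0,\tfrac12)$ forming a triangle with two forward and one backward arc is already good), so the choice of $p$ genuinely matters and you give no mechanism for producing a good one; an averaging argument is not obviously available since goodness is a per-cycle condition, not a global quantity. For the induction, the splicing step fails outright: the cycles $C_1\subseteq V_1\cup\{v^\ast\}$ and $C_2\subseteq V_2\cup\{v^\ast\}$ meet only in $v^\ast$, so they cannot be merged into a single cycle of $D$ without inserting arcs between $V_1$ and $V_2$ whose forward/backward status you do not control; and even if you could splice, both $C_1$ and $C_2$ are good, so there is no reason the result would be bad. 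A further issue is that the inductive $2$-colouring of $D-v^\ast$ is not canonical, so showing that one particular colouring fails to extend proves nothing.

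A more promising route, hinted at later in the same section of the paper, is to use that ``every cycle good'' is exactly ``no negative cycle'' for the $\pm1$ arc weights; Bellman--Ford then yields an integer potential $\phi$ with $\phi(v)-\phi(u)\le w(uv)$ on every arc, and the $2$-colouring should be extracted from $\phi$ combined with the circular positions rather than from a bare geometric cut or a vertex-deletion induction.
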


We make the next definition: let
$$
 \crs{D}=\min\{|\mc C|:\mc C\in \rs D\textmd{ and }\dchr{\rev{D}{\mc C}}\leq 2\}.
 $$

\begin{cor}\label{cor:bound}
 $\crs{D}\leq (n-1)m$ for any digraph $D$ with $n$ vertices and $m$ arcs.
\end{cor}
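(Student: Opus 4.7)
The plan is to simply assemble the three ingredients already developed in the preceding discussion: the potential function $\sigma$, the effect of reversing a non-good cycle, and Theorem \ref{charbitthm}.

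First I would start with an arbitrary digraph $D$ on $n$ vertices and $m$ arcs, with vertices placed evenly on the circle of perimeter $1$. The key invariants to keep in mind are $\sigma(D)\geq m/n$ (each arc contributes at least $1/n$ to the sum) and $\sigma(D)\leq m$ (each arc contributes at most $1$), and the fact that reversing a non-good cycle $C$ strictly decreases $\sigma$ by at least $1/n$.

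Next, I would describe the greedy procedure: start with $\mc C_0=\emptyset$ and, as long as $\rev{D}{\mc C_i}$ contains a non-good cycle $C_i$, append $C_i$ to obtain $\mc C_{i+1}=\mc C_i\smf \la C_i\ra$. By the monotonicity of $\sigma$ under such reversions, after $i$ steps we have
\[
\sigma(\rev{D}{\mc C_i})\leq \sigma(D)-\tfrac{i}{n}\leq m-\tfrac{i}{n}.
\]
Since the left-hand side is always at least $m/n$, this forces $i\leq (n-1)m$, so the procedure must terminate after at most $(n-1)m$ steps with some reversal sequence $\mc C$ of length $|\mc C|\leq (n-1)m$ in which every cycle of $\rev{D}{\mc C}$ is good.

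Finally, I would apply Theorem \ref{charbitthm} to conclude $\dchr{\rev{D}{\mc C}}\leq 2$, which by definition gives $\crs{D}\leq |\mc C|\leq (n-1)m$. There is no real obstacle here; the only thing to double check is that the procedure is well-defined at each step (each $\mc C_i$ is a genuine element of $\rs D$), but this is immediate from clause (2) of the definition of $\rs D$ since each $C_i$ is a cycle in the current digraph $\rev{D}{\mc C_i}$.
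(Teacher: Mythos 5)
Your proposal is correct and is essentially the paper's own argument: the corollary follows immediately from the preceding discussion of Charbit's potential function $\sigma$, the $\frac{1}{n}$ decrease per non-good cycle reversal together with the bounds $\frac{m}{n}\leq\sigma(D)\leq m$, and Theorem \ref{charbitthm}. Your write-up just makes the greedy iteration and the termination count explicit, which matches the intended proof.
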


{\DD For finite tournaments, our Theorem \ref{thm:finite} gives a similar bound, but the situation was analysed in \cite{gyarfas_rev} and one can say more.

\begin{theorem}[\cite{gyarfas_rev}]$\crs{D}\leq \frac{(n-1)(n-2)}{2}$ for any tournament $D$ on $n$ vertices (and this is witnessed by a sequence of 3-cycles).
\end{theorem}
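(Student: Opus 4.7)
My plan is to adapt the inductive argument of \cite{gyarfas_rev}, proceeding by induction on $n$. The base cases $n \leq 2$ are trivial since $\binom{n-1}{2} = 0$. For the inductive step, I would select a vertex $v$ of $D$ and apply the hypothesis to $D - v$: there is a sequence $\mc C_0$ of at most $\binom{n-2}{2}$ 3-cycles in $D - v$ (and hence in $D$, since the cycles avoid $v$) so that $\rev{D-v}{\mc C_0}$ is covered by two transitively ordered acyclic parts $A \colon a_1 \to \cdots \to a_p$ and $B \colon b_1 \to \cdots \to b_q$ with $p+q=n-1$. Since $\binom{n-1}{2} - \binom{n-2}{2} = n - 2$, this leaves a budget of at most $n - 2$ additional 3-cycle reversals to integrate $v$ into the partition.

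The heart of the proof is a lemma asserting that $n - 2$ further 3-cycle reversals suffice. I would measure progress by an inversion potential: for each part $C \in \{A, B\}$, the number of pairs $i < j$ with $v \to c_i$ and $c_j \to v$ that obstruct $C \cup \{v\}$ from being transitive. If one can drive this potential in, say, $A$ to zero while keeping $B$ intact, then $A \cup \{v\}$ becomes acyclic and we are done. The strategy is to exhibit, at each step, a 3-cycle through $v$, some $a \in A$, and some $b \in B$ whose reversal strictly decreases the $A$-potential without disturbing the transitive order of $B$: since the only internal arc that such a mixed 3-cycle touches besides those incident to $v$ is the cross-arc $ab$, no new inversions can be created inside $A$ or $B$.

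The main obstacle is both the existence of the required mixed 3-cycle in every configuration and the bound of $n - 2$ on the number of reversals. Existence requires a case analysis exploiting the positions of $v$'s in- and out-neighborhoods within the two transitive orders together with the tournament arcs between $A$ and $B$; given that $v$ has $n - 1$ neighbors distributed among the two transitively ordered parts, one shows that whenever the potential is positive a suitable triple $(v,a,b)$ must be available. The budget bound should follow from an amortized analysis of the $n - 1$ arcs incident to $v$: each reversal flips exactly two such arcs, and showing that each arc of $v$ is flipped at most once (with an accounting correction for the eventual split-point of $v$ in the target transitive order) yields the $n - 2$ budget.
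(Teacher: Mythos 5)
You should first note that the paper does not prove this statement at all: it is quoted from \cite{gyarfas_rev}, where the stronger ``2-partition-transitive'' form is established (after reversing at most $\binom{n-1}{2}$ 3-cycles, the even-indexed and the odd-indexed vertices of a suitable ordering are both transitive). So your argument is a reconstruction of the cited proof rather than of anything in this paper. Your outer frame is fine: induction on $n$, the budget accounting $\binom{n-1}{2}-\binom{n-2}{2}=n-2$, and the observation that a weaker conclusion (a cover by two transitive sets, rather than the even/odd form) suffices for the bound on $\crs{D}$.

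The genuine gap is the insertion lemma, which is essentially the entire content of the theorem and which you assert rather than prove. Two things are missing. First, existence of the potential-decreasing triangle: to flip a specific arc $va$ (say $a$ chosen as the first out-neighbour of $v$ in the order of $A$, so that the $A$-potential strictly drops) you need some $b\in B$ with $a\to b$ and $b\to v$; such a $b$ need not exist. In the extreme case where $v$ beats all of $B$ you are saved because $B\cup\{v\}$ is already transitive, but in intermediate configurations you may have to switch which part you are inserting $v$ into, and you have not shown that the case analysis closes up: every mixed reversal also flips one $v$--$B$ arc, so a step that decreases the $A$-potential can increase the $B$-potential, and nothing in your sketch rules out cycling between the two targets. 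Second, the budget: ``each arc of $v$ is flipped at most once'' is exactly the statement that needs proof, not an accounting device; since each reversal flips one $v$--$A$ arc and one $v$--$B$ arc, you need either a monotone quantity combining both potentials or an argument that the $B$-side flips are harmless and never repeated, and neither is supplied. Until the insertion lemma (existence of a suitable 3-cycle whenever the chosen potential is positive, plus termination in at most $n-2$ steps) is actually proved, the proposal is a plausible plan rather than a proof.
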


Furthermore, there is a lower bound for the number of 3-cycles required in such a reversal:  if $D$ is the Paley tournament on $n$ vertices then we need the reversal of at least $(1-o(1)){n^2}/{32}$ many 3-cycles to lower the dichromatic number to 2 \cite[Theorem 8]{gyarfas_rev}. We are not aware of a lower bound for $\crs{D}$ though. }

\medskip

%At this point, we don't have any lower bounds on $\crs{D}$. 

Let us present now a simple iterative process to produce a digraph with large {\DD dichromatic} number which could serve as another test case. Given a digraph $D=D^c$ with {\DD dichromatic} number $c$ and digirth $\ell$ one can construct a digraph $D^{c+1}$ with chromatic number $\geq c+1$ and digirth $\ell$. Indeed, take disjoint copies $\{D_i:i< \ell\}$ of $D$ and let $D^{c+1}$ be the union of $\{D_i:i<\ell\}$  with the additional edges $uv$ where $u\in V(D_i),v\in V(D_{i+1})$ for $i<\ell-1$ and $u\in V(D_{\ell-1}),v\in V(D_{0})$. If there is an acyclic colouring of $D^{c+1}$ with $c$ colours then each colour must appear in each copy of $D^c$; however, this means that there are monochromatic cycles of length $\ell+1$ with meet each $D_i$ in exactly one vertex. It would be interesting to see the values of $\crs{D^c}$ calculated when starting from a 3-cycle with $\ell=3$.

% Note that if the digirth of $D^c$  was smaller than $\ell$ then the girth of $D^{c+1}$ is bigger than $\ell$ as well.
\medskip

% Now, for tournaments our Theorem \ref{thm:finite} gives a similar bound. A constructive version of the proof goes as follows: at each step we will have an acyclic $W\subseteq V$ and a linear order $\tri$ on $V\setm W$ with backward arcs $F_\tri$; we will stop if $F_\tri=\emptyset$. 
% 
% To start let $W$ be a singleton and $\tri$ arbitrary. In general, find $a=yx\in F_\tri$ so that the $\tri$-interval between $x$ and $y$ is minimal, say size $k_\tri$. If $x,y$ are $\tri$-successors then switch their order (and turn no cycles); this lowers the size of $F_\tri$ by exactly 1. If $W\cup \{y\}$ is still acyclic then add $y$ to $W$ and turn no cycles; this again lowers $F_\tri$ by at least 1. Otherwise, as in the proof of  Theorem \ref{thm:finite} we can either turn a 3-or 5-cycle and lower $k_\tri$; this can happen at most $n-1$ times. So, since $F_\tri$ can be lowered at most $m$ times, we see that $\crs{D}\leq (n-1)m$ for any tournament i.e. $\crs{D}\leq n^3$.

\subsection*{Complexity considerations}

Regarding complexity, whether Charbit's proof can be carried out in polynomial time comes down to the question if we can find, in polynomial time, a cycle in $D$ which is not good. Define the weight function $w:A\to \{-1,1\}$ as $w(a)=1$ if $a$ is forward and  $w(a)=-1$ if $a$ is backward. Now, $C$ is good iff $\sum_{a\in A(C)}w(A)\geq 0$ so we look for an algorithm which finds negative cycles in polynomial time: this is done by the Bellman-Ford algorithm in $O(nm)$ time \cite{bellmann}. So we can find a reversal sequence $\mc C\in \rs D$ in polynomial time such that $\dchr{\rev{D}{\mc C}}\leq 2$. 

{\DD Regarding tournaments, one can find a sequence of 3-cycles $\mc C\in \rs D$ in $O(n^2)$ time such that $\dchr{\rev{D}{\mc C}}\leq 2$ \cite[Theorem 9]{gyarfas_rev}. }

\medskip

At this point, we don't know how hard it is to calculate $\crs D$ i.e. if one can decide if $\crs D \leq k$ in polynomial time for a fixed $k$ (even for tournaments).

%Note that the above proof actually shows that for any acyclic $W_0$ there is a $W\supseteq W_0$ and $\mc C\in \rs D$ so that both $W$ and $V \setm W$ are acyclic in $\rev{D}{\mc C}$.

 \section{Appendix B: the structure of reversal sequences} \label{app:structure}
 
      Next, we would like to better understand an infinite reversal sequence. If $\mc C\in \rs D$ and $e\in E(D)$ then let $$\St(\mc C, e)=\{C\in \ran(\mc C):e\in E(C)\}.$$ Now, let $\mc C^{(e)}$ be the minimal $\mc E\subseteq \ran(\mc C)$ so that  
     \begin{enumerate}
      \item      $\St(\mc C, e)\subseteq \mc E$, and 
      \item if $f\in E(C)$ for some $C\in \mc E$ then  $\St(\mc C, f)\subseteq \mc E$ as well.
     \end{enumerate}

     \begin{obs}
      $\mc C^{(e)}$ is countable and $E(\mc C^{(e)})\cap E(\mc C^{(f)})=\emptyset$ or $\mc C^{(e)}=\mc C^{(f)}$ for all $e,f\in E$.
     \end{obs}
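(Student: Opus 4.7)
\medskip

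\noindent\textbf{Proof plan.} The plan is to build $\mc C^{(e)}$ explicitly as a countable increasing union and then interpret it as a connected component of a natural ``edge-sharing'' relation on $\ran(\mc C)$, from which both assertions will fall out.

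For countability, I would define $\mc E_0=\St(\mc C,e)$ and inductively
\[
\mc E_{n+1}=\mc E_n\cup \bigcup\{\St(\mc C,f):C\in \mc E_n,\, f\in E(C)\}.
\]
The key point is that $\mc E_0$ is finite because $\mc C$ is locally finite, and inductively each $\mc E_n$ is finite since every directed cycle $C$ has finitely many edges and, again by local finiteness, $\St(\mc C,f)$ is finite for each $f$. Hence $\mc E=\bigcup_{n<\oo}\mc E_n$ is countable, contains $\St(\mc C,e)$, and is closed under the rule defining $\mc C^{(e)}$; minimality then forces $\mc C^{(e)}\subseteq \mc E$, and equality follows since $\mc C^{(e)}$ itself visibly contains each $\mc E_n$ by induction.

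For the dichotomy, I would introduce on $\ran(\mc C)$ the relation $C\sim C'$ iff $E(C)\cap E(C')\neq \emptyset$, and let $\approx$ be its transitive closure. The plan is to show that $\mc C^{(e)}$ is exactly the $\approx$-equivalence class containing $\St(\mc C,e)$ (assuming this set is non-empty; otherwise $\mc C^{(e)}=\emptyset$). One inclusion is clear: any $\approx$-class containing a cycle through $e$ is closed under the rule ``$C$ in the class and $f\in E(C)$ imply $\St(\mc C,f)$ in the class'', and contains $\St(\mc C,e)$, so by minimality it contains $\mc C^{(e)}$. Conversely, the iterative construction above only ever adds cycles sharing an edge with some cycle already present, so every cycle in $\mc C^{(e)}$ is $\approx$-equivalent to any element of $\St(\mc C,e)$.

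Granted this description, the disjointness/equality dichotomy is immediate: if $g\in E(\mc C^{(e)})\cap E(\mc C^{(f)})$, pick $C_e\in \mc C^{(e)}$ and $C_f\in \mc C^{(f)}$ with $g\in E(C_e)\cap E(C_f)$; then $C_e\sim C_f$, so $C_e\approx C_f$, and hence the two $\approx$-classes coincide, giving $\mc C^{(e)}=\mc C^{(f)}$. There is no serious obstacle here; the only point to handle with care is the trivial case where $\St(\mc C,e)=\emptyset$, in which case $\mc C^{(e)}=\emptyset$ and both statements hold vacuously.
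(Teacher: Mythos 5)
Your overall route is the same as the paper's: the paper's own justification of the dichotomy is the observation that $g\in E(\mc C^{(e)})$ iff there is a finite chain of cycles $C'_0,\dots,C'_n\in \ran(\mc C)$ with $e\in E(C'_0)$, $g\in E(C'_n)$ and consecutive cycles sharing an edge, which is exactly your description of $\mc C^{(e)}$ as the edge-sharing connected component generated by $\St(\mc C,e)$; and your level-by-level construction is a fleshed-out version of the paper's remark that countability follows from local finiteness (each $\St(\mc C,f)$ is finite, cycles have finitely many edges, so each $\mc E_n$ is finite and $\mc E=\mc C^{(e)}$ is countable). That part is fine.

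There is, however, one genuine slip in the dichotomy part: the two ``inclusions'' you offer are in fact the same inclusion. The statement ``any $\approx$-class containing a cycle through $e$ is closed under the rule and contains $\St(\mc C,e)$, hence contains $\mc C^{(e)}$ by minimality'' and the statement ``every cycle in $\mc C^{(e)}$ is $\approx$-equivalent to an element of $\St(\mc C,e)$'' both say that $\mc C^{(e)}$ is \emph{contained in} the class; the reverse inclusion --- that every cycle $\approx$-connected to $\St(\mc C,e)$ already lies in $\mc C^{(e)}$ --- is never argued, and it is precisely what your last step uses: knowing only that $\mc C^{(e)}$ and $\mc C^{(f)}$ are both subsets of one common $\approx$-class does not make them equal. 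The repair is easy and worth writing out: if $C_0\sim C_1\sim\dots\sim C_n$ with $C_0\in\St(\mc C,e)$, then $C_0\in\mc C^{(e)}$ by clause (1) of the definition, and if $C_i\in\mc C^{(e)}$ and $f_i$ is an edge shared by $C_i$ and $C_{i+1}$, then $C_{i+1}\in\St(\mc C,f_i)\subseteq\mc C^{(e)}$ by clause (2); by induction along the chain the whole $\approx$-class lies in $\mc C^{(e)}$, so $\mc C^{(e)}$ equals its class, and your final argument (a common edge $g$ forces $C_e\approx C_f$, hence the classes, and now the sets $\mc C^{(e)}$ and $\mc C^{(f)}$, coincide) then goes through verbatim.
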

    {\DD \begin{proof}
      Indeed, $|\mc C^{(e)}|\leq \aleph_0$ follows from $\mc C$ being locally countable. 
      
      The second statement is an immediate corollary of the following observation: $g\in E(\mc C^{(e)})$ iff there is a finite sequence of cycles $C'_0\dots C'_n\in \ran(\mc C)$ so that $e\in C'_0$, $g\in C'_n$ and $E(C'_i)\cap E(C'_{i+1})\neq \emptyset$ for all $i<n$.
      
     \end{proof}}

At this point, $\mc C^{(e)}$ is only a set but it inherits a well order from  $\mc C$, in some countable order type.

     \begin{obs}
      $\mc C^{(e)}\in \rs D$ for all $e\in E$.
\end{obs}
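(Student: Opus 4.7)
The plan is to verify the three clauses in the inductive definition of $\rs D$ for $\mc C^{(e)}$ equipped with its inherited well-order. The base case is trivial. For the limit clause, local finiteness of $\mc C^{(e)}$ follows at once from $\mc C^{(e)} \subseteq \mc C$ and local finiteness of $\mc C$. So the heart of the proof is the successor/cycle clause: whenever $C \in \mc C^{(e)}$, one must show that $C$ is a directed cycle in the digraph obtained from $D$ by reversing all cycles in $\mc C^{(e)}$ that strictly precede $C$ in the inherited order.

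The engine of the argument is the following \emph{invariance observation}. Let $\mc C^{(e)}_{\prec C}$ denote the initial segment of $\mc C^{(e)}$ strictly before $C$, and let $\mc C_{\prec C}$ denote the corresponding initial segment of the whole $\mc C$. For every $f \in E(\mc C^{(e)})$, the orientation of $f$ in $\rev{D}{\mc C_{\prec C}}$ coincides with its orientation in $\rev{D}{\mc C^{(e)}_{\prec C}}$. Indeed, both orientations are determined by the parity of the number of preceding cycles (in $\mc C$, resp.\ in $\mc C^{(e)}$) that contain $f$; and the closure clause in the definition of $\mc C^{(e)}$ forces $\St(\mc C, f) \subseteq \mc C^{(e)}$ as soon as $f$ lies on some cycle in $\mc C^{(e)}$. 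Hence the set of cycles in $\mc C_{\prec C}$ touching $f$ coincides with the set of cycles in $\mc C^{(e)}_{\prec C}$ touching $f$, and the two parities agree.

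Given this observation, since $\mc C \in \rs D$ guarantees that $C$ is a directed cycle in $\rev{D}{\mc C_{\prec C}}$, and $E(C) \subseteq E(\mc C^{(e)})$ by construction, invariance applied edge by edge to $C$ immediately yields that $C$ is also a directed cycle in $\rev{D}{\mc C^{(e)}_{\prec C}}$. A transfinite induction along the inherited well-order of $\mc C^{(e)}$, invoking the limit clause of $\rs D$ at limit stages, then gives $\mc C^{(e)} \in \rs D$. The only delicate bookkeeping is running the invariance observation and the reversal-sequence construction simultaneously inside one transfinite induction, but no real obstacle appears: the closure property of $\mc C^{(e)}$ is tailor-made to decouple its edge set from the rest of $\mc C$.
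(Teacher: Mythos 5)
Your proposal is correct and takes essentially the route the paper intends: the paper's proof is just "an easy induction on the length of $\mc C^{(e)}$", and your transfinite induction along the inherited order, with the parity/closure invariance observation supplying the successor step and local finiteness (inherited from $\mc C$) handling the limit step, is precisely that induction with the omitted details filled in.
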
{\DD
\begin{proof}
 The proof is an easy induction on the length of $\mc C^{(e)}$.
\end{proof}

Actually, we can say even more: if $\mc C^{(e)}\neq \mc C^{(f)}$ then both $\mc C^{(e)}\smf \mc C^{(f)}$ and $\mc C^{(f)}\smf \mc C^{(e)}$ are reversal sequences in $D$ and $\rev{D}{(\mc C^{(e)}\smf \mc C^{(f)})}=\rev{D}{(\mc C^{(f)}\smf \mc C^{(e)})}$. 

Now, the next corollary easily follows.

\begin{cor}\label{l:canonical}
 If $\mc C\in \rs  D$ and $\mc C^*$ is any enumeration of $\{\mc C^{(e)}:e\in E(D)\}$ then $\mc C^*\in \rs D$ and  $\rev{D}{\mc C}=\rev{D}{\mc C^*}$.
\end{cor}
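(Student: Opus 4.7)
The plan is to leverage the structural observation preceding the corollary: distinct classes $\mc C^{(e)}$ and $\mc C^{(f)}$ are edge-disjoint, so they ``commute'' as reversal sequences. I would verify first that $\mc C^*$ really is a locally finite sequence of directed cycles of the appropriate intermediate digraphs, and second that it produces the same final digraph as $\mc C$ by a parity argument.

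For local finiteness: each edge $e \in E(D)$ lies in $E(\mc C^{(f)})$ for at most one class $\mc C^{(f)}$ (by the disjointness obs.\ above) and appears in only finitely many cycles of that class, because $\mc C$ itself is locally finite. Therefore $e$ appears in only finitely many cycles of $\mc C^*$ altogether. For cycle validity: suppose $C$ occurs at ordinal position $\alpha$ in $\mc C^*$, and $C$ belongs to the block $\mc C^{(e)}$ at internal position $\gamma$. Every edge of $C$ sits in $E(\mc C^{(e)})$, so it cannot have been touched by any cycle of a block $\mc C^{(f)}$ with $\mc C^{(f)}\neq \mc C^{(e)}$ enumerated before $\mc C^{(e)}$, nor by any cycle of $\mc C^{(e)}$ itself listed after $C$. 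Hence the orientations of $C$'s edges in $\rev{D}{\mc C^*\uhr \alpha}$ coincide with their orientations in $\rev{D}{\mc C^{(e)}\uhr \gamma}$. Since $\mc C^{(e)}\in \rs D$ by the previous observation, $C$ is a directed cycle in that intermediate digraph, hence also in $\rev{D}{\mc C^*\uhr \alpha}$. This confirms $\mc C^*\in \rs D$; limit stages cause no trouble because, by edge-disjointness between blocks, each edge stabilises already within its unique block.

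For the equality $\rev{D}{\mc C}=\rev{D}{\mc C^*}$: local finiteness of both sequences means each edge $\{u,v\}\in E(D)$ lies in only finitely many of the cycles listed, and each such reversal toggles the orientation of that edge. Thus the orientation of $\{u,v\}$ in $\rev{D}{\mc C}$ is determined solely by the parity of the number of cycles in $\ran(\mc C)$ that contain $\{u,v\}$, and likewise for $\mc C^*$. Because $\{\mc C^{(e)}:e\in E(D)\}$ partitions $\ran(\mc C)$ into blocks, the multisets of cycles in $\mc C$ and $\mc C^*$ coincide, so the two parities match for every edge and the two reversed digraphs agree.

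The only real obstacle is making sure the transfinite concatenation behaves correctly; this reduces to the edge-disjointness of the blocks together with countability of each individual $\mc C^{(e)}$, so the argument is essentially bookkeeping rather than substantive combinatorics.
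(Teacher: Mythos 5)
Your proposal is correct and takes essentially the same route as the paper: the paper deduces the corollary from the pairwise edge-disjointness (hence commutation) of the blocks $\mc C^{(e)}$ and simply states that it ``easily follows''. Your verification of local finiteness, the stage-by-stage cycle validity in $\rev{D}{\mc C^*\uhr \alpha}$ via edge-disjointness, and the parity argument for $\rev{D}{\mc C}=\rev{D}{\mc C^*}$ are exactly the bookkeeping details the paper leaves implicit.
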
}

% 
% Now something much stronger:
% 
% \begin{lemma}\label{l:canonical}
%  If $\mc E=\la \mc C_\varepsilon\ra_{\varepsilon<\delta}$ is any enumeration of a set of the form $\{\mc C^{(e)}:e\in F\}$ for some $F\subseteq E$ then $\la \mc C_\varepsilon\ra_{\varepsilon<\delta}\in \rs D$. Moreover, 
%  \begin{enumerate}
%   \item if $\mc E,\mc E'$ are two enumerations of the same set  $\{\mc C^{(e)}:e\in F\}$ then $\rev{D}{\mc E}=\rev{D}{\mc E'}$, and 
%   
%  \item  $\rev{D}{\mc C}=\rev{D}{\la \mc C^{(e)}\ra_{e\in E}}$.
%  \end{enumerate}
%  
%  In particular, if $|\mc C|=\kappa>\oo$ then there is a rearrangement $\mc C^*$ of $\mc C$ of type $\kappa$ so that  $\rev{D}{\mc C}=\rev{D}{\mc C^*}$.
% \end{lemma}

%The above lemma also implies that we can simply write $\rev{D}{\la \mc C^{(e)}\ra_{e\in F}}$ without specifying the enumeration in this case. %So it does not matter what order 

Ultimately, Corollary \ref{l:canonical} tells us that the effect of reversing $\mc C$ can be reproduced by reversing countable sets of cycles \emph{independently and in any order we choose.} For these reasons, we will call $\la \mc C^{(e)}\ra_{e\in E}$ the \emph{canonical decomposition} of $\mc C$.

% \begin{prob}
%  Is $\omg$ optimal in the above observation?
% \end{prob}
% 
% 
% \begin{prob}
% Suppose that $C\in \mb C(\rev{D}{\mc C})$. Is there a finite subsequence $\mc C^*$ of $\mc C$ so that $C\in \mb C(\rev{D}{\mc C^*})$?
% \end{prob}

% \todo{I think we can omit these observations... -D}
% We say that $F\subseteq E$ is \emph{$\mc C$-closed} iff $e\in F$ implies $\mc C^{(e)}\subseteq F$. Similarly, $W\subseteq V$ is $\mc C$-closed iff $e\in E(D\uhr W)$ implies $V(\mc C^{(e)})\subseteq W$.
% 
% \begin{obs} Each set $E(\mc C^{(e)})$ is $\mc C$-closed. Moreover, if $F\subseteq E$ is $\mc C$-closed and $e\in E\setm F$ then $E(\mc C^{(e)})\cap F=\emptyset$. 
% \end{obs}

%Now an interesting observation is the following: cycles from different classes of the canonical decomposition can be reversed simultaneously, so we see that no matter how long the initial reversal sequence was, we can rearrange the sequence using the canonical decomposition, reverse cycles simultaneously and finish the whole process in at most $\omg$ many steps. \todo{I'm a bit confused by the preceeding sentence.  Why $\omg$ many steps?  Shouldn't it be $\kappa$ many steps?}  Is $\omg$ optimal in the this observation? Somewhat surprisingly any countable $\mc C\in \rs D$ is equivalent to some $\mc C^*\in \rs D$ of type $\leq\oo$:
\medskip

Corollary \ref{l:canonical} also implies that  if $|\mc C|=\kappa>\oo$ then there is a rearrangement $\mc C^*$ of $\mc C$ of \emph{type $\kappa$} so that  $\rev{D}{\mc C}=\rev{D}{\mc C^*}$. How about countable sequences? The next theorem further simplifies the picture: the countable reversal sequences can be rearranged into type $\oo$ cycle reversions.

\begin{theorem}\label{thm:tpoo}
 Suppose that $\mc C\in \rs D$ is countable. Then there is a rearrangement $\mc C^*\in \rs D$ of $\mc C$ of type $\leq\oo$ so that $\rev{D}{\mc C}=\rev{D}{\mc C^*}$.
\end{theorem}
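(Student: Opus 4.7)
The plan is to produce a total order $\prec^*$ on the set of cycles in $\mc C$, of order type $\leq \oo$, such that the associated sequence $\mc C^*$ is a valid reversal sequence with $\rev{D}{\mc C^*}=\rev{D}{\mc C}$. Define first a partial order $\prec_0^*$ on $\mc C$ as the transitive closure of the relation ``$C_\eta \prec_0 C_\xi$ iff $\eta<\xi$ and $E(C_\eta)\cap E(C_\xi)\neq\emptyset$''. Any linear extension of $\prec_0^*$ will do the job: for each edge $e$, any two cycles in $\St(\mc C,e)$ share the edge $e$ and are thus comparable in $\prec_0$, so the relative order of $\St(\mc C,e)$ is the same under $\prec$ and under any extension of $\prec_0^*$. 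Consequently, just before any given $C_\xi$ is to be reversed in $\mc C^*$, each of its edges has been flipped the same number of times as in $\mc C$, so $C_\xi$ is still a directed cycle of the current graph and the final orientation of each edge agrees with the one it has in $\rev{D}{\mc C}$. Local finiteness of $\mc C^*$ is automatic, since the underlying multiset of cycles is unchanged.

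The key lemma is: for every $C \in \mc C$, the set $P(C)=\{C'\in\mc C:C'\prec_0^* C\}$ is finite. This is proved by transfinite induction on the original index $\xi$ of $C=C_\xi$. The immediate $\prec_0$-predecessors $F_{C_\xi}=\{C_\eta:\eta<\xi,\ E(C_\eta)\cap E(C_\xi)\neq\emptyset\}$ form a finite set, because $C_\xi$ has only finitely many edges and each of them lies in only finitely many cycles of $\mc C$ by local finiteness. The inductive hypothesis then yields $P(C_\xi)=F_{C_\xi}\cup\bigcup_{C'\in F_{C_\xi}} P(C')$, a finite union of finite sets.

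Granted the lemma, $\prec^*$ is built by dovetailing. Enumerate $\mc C=\{D_n:n<\oo\}$ bijectively and construct nested finite initial segments $I_0\subseteq I_1\subseteq\dots$ with $D_n\in I_n$: at stage $n$, if $D_n\in I_{n-1}$ set $I_n=I_{n-1}$; otherwise append $(P(D_n)\cup\{D_n\})\setm I_{n-1}$ (linearized consistently with $\prec_0^*$) at the end of $I_{n-1}$. A short check based on the definition of $P$ shows this is consistent with $\prec_0^*$: if some newly added $C'\in I_n\setm I_{n-1}$ had $C'\prec_0^* C$ for some $C\in I_{n-1}$, then tracing $C$ back to its stage of introduction would give $C\in P(D_k)\cup\{D_k\}$ for some $k<n$, hence $C'\prec_0^* D_k$ and $C'\in I_k\subseteq I_{n-1}$, a contradiction. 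The resulting order $\prec^*=\bigcup_n I_n$ has order type $\leq \oo$, and the associated $\mc C^*$ is the desired rearrangement. The only real difficulty is the finiteness of the sets $P(C)$; everything else is bookkeeping.
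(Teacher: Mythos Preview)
Your proof is correct and takes a genuinely different route from the paper's. The paper proceeds by transfinite induction on the length of $\mc C$: the successor step inserts the finitely many tail cycles into an already-obtained type $\leq\oo$ rearrangement at a finite position, while the limit step first reduces to type $\leq\oo\cdot\oo$ (by applying the inductive hypothesis on blocks between a cofinal $\oo$-sequence) and then performs a fairly intricate edge-by-edge construction, pulling forward finite initial segments $\mc F_k$ from each block so as to strip one edge at a time out of the remaining tail.

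Your argument is more conceptual: you isolate the only order constraints that actually matter (two indices must keep their relative order precisely when the corresponding cycles share an edge), package these as the partial order $\prec_0^*$, and then prove the single combinatorial fact that every $\prec_0^*$-down-set is finite. This reduces the problem to the standard observation that a countable poset with finite down-sets admits a linear extension of type $\leq\oo$. What you gain is a one-shot construction with no ordinal induction on the length of $\mc C$ and no intermediate $\oo\cdot\oo$ stage; what the paper's approach buys is a more explicit, hands-on recipe that makes the ``pull finitely many cycles to the front'' mechanism visible. Your verification that any $\prec_0^*$-extension preserves validity (because $\St(\mc C,e)$ is a $\prec_0$-chain, so each edge of $C_\xi$ has been flipped exactly the same set of times at the moment $C_\xi$ is reached) is the cleanest way to see why the rearrangement works, and it also makes transparent why $\rev{D}{\mc C}=\rev{D}{\mc C^*}$.

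One small point of hygiene: since $\mc C$ is a sequence and the same cycle can occur at several indices, your relation $\prec_0$ and the sets $P(\,\cdot\,)$ should really live on the index set rather than on the set of cycles; your write-up slides between the two. This does not affect the argument, but it is worth saying explicitly.
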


By a rearrangement we simply mean that the sequences $\mc C$ and $\mc C^*$  contain the same cycles (with the same multiplicity).

%Actully, we will show that $\mc C^*$ is a rearrangement of $\mc C$.

\begin{proof}
 We will prove by induction on the length $\xi$ of $\mc C$. If $\xi\leq \oo$ then we can take $\mc C^*=\mc C$.
 
 Suppose that we proved the statement for $\xi$ and we would like to step up to $\xi+1$. Let $\xi_0$ be the largest limit ordinal $\leq \xi$. Now, there is a type $\leq \omega$ sequence $\mc C^*$ which is equivalent to $\mc C\uhr \xi_0$. There is an $\ell<\oo$ so that no edge from the cycles $\mc C\uhr (\xi+1\setm \xi_0)$ appears in $\mc C^*\uhr \oo\setm \ell$; this is because $\mc C\uhr (\xi+1\setm \xi_0)$ is finite. So we define $\mc C^{**}=(\mc C^*\uhr \ell)\smf (\mc C\uhr (\xi+1\setm \xi_0))\smf (\mc C^*\uhr (\oo\setm \ell))$. It is easy to see that $\mc C^{**}\in \rs D$, $\mc C^{**}$ has type $\oo$ and $\rev{D}{\mc C}=\rev{D}{\mc C^{**}}$.
 
 Now the limit stages: let $\xi_n$ denote a type $\oo$ cofinal sequence in $\xi$. Let $\mc C_0\in \rs D$ be of type $\leq \oo$ equivalent to $\mc C\uhr \xi_0$, and inductively find $\mc C_{n+1}\in \rs{\rev{D}{\mc C_{\leq n}}}$ of type $\leq \oo$ equivalent to $\mc C\uhr \xi_{n+1}\setm \xi_n$. So $\la \mc C_n\ra_{n\in \oo}$ has type $\leq \oo\cdot \oo$ and is equivalent to $\mc C$, i.e., $\rev{D}{\la \mc C_n\ra_{n\in \oo}}=\rev{D}{\mc C}$. To make notation more simple, we assume that $\mc C=\la \mc C_n\ra_{n\in \oo}$.% {\DD where each $\mc C_n$ has type $\leq \oo$.}
 
 Start by listing the countably many edges that appear in the cycles of $\mc C$ in type $\omega$ as $\{a_k:k\in \oo\}$. Our first goal is to find a finite $\mc C^*_0\in \rs D$, and some $\mc D_0\in \rs{\rev{D}{\mc C^*_0}}$ of type $\leq \oo\cdot \oo$ so that 
 \begin{enumerate}
 \item both  $\mc C^*_0,\mc D_0$ are subsequences of $\mc C$,
  \item $\mc \mc C^*_0\smf \mc D_0$ is equivalent to $\mc C$, and 
  \item $a_0$ does not appear in cycles from $\mc D_0$.
 \end{enumerate}
Then we look at $a_1$ in $\rev{D}{\mc C^*_0}$ and repeat the process to find a finite subsequence $\mc C^*_1$ of $\mc D_0$ and $\mc D_1$ subsequence of $\mc D_0$ so that $\mc C^*_1\smf \mc D_1$ is equivalent to $\mc D_0$ but $a_1$ (or $a_0$) does not appear in $\mc D_1$ any more. In the end, the sequence $\mc C^*=\la \mc C^*_k:k<\oo\ra$ has type $\leq \oo$ and is equivalent to $\mc C$.
 
 We present the construction of $\mc C^*_0$ and $\mc D_0$ in detail; the rest is strictly analoguous.
 
 Let $n_0$ be the maximal index so that $a_0$ appears in some cycle from $\mc C_n$. Let $\mc F_{n_0}$ be the finite initial segment of $\mc C_{n_0}$ so that $a_0$ does not appear in $\mc C_{n_0}\setm \mc F_{n_0}$. Let $\mc F_{n_0-1}$ be the finite initial segment of $\mc C_{n_0-1}$ so that edges from $\mc F_{n_0}$ do not appear in cycles from 
 $\mc C_{n_0-1}\setm \mc F_{n_0-1}$. Given $\mc F_{n_0},\mc F_{n_0-1}\dots \mc F_{n_0-k+1}$, we let $\mc F_{n_0-k}$ be the smallest finite initial segment of $\mc C_{n_0-k}$ so that edges from $\mc F_{n_0},\mc F_{n_0-1}\dots \mc F_{n_0-k+1}$ do not appear in $\mc C_{n_0-k}\setm \mc F_{n_0-k}$. This defines $\mc F_0, \mc F_1\dots \mc F_{n_0}$ and we let $$\mc C_0^*=\mc F_0\smf \mc F_1\smf \dots \smf \mc F_{n_0}$$
 and $$\mc D_0=(\mc C_0\setm \mc F_0)\smf  (\mc C_1\setm \mc F_1)\smf \dots \smf (\mc C_{n_0}\setm \mc F_{n_0})\smf \mc C_{n_0+1}\smf\mc C_{n_0+2}\smf \dots$$

 First, a simple induction on $k=0,1\dots n_0$ proves that $\mc F_0\smf \mc F_1\smf \dots \smf \mc F_{k}\in \rs D$ and so $\mc C_0^*\in \rs D$ as well. Similarly, $(\mc C_0\setm \mc F_0)\smf  (\mc C_1\setm \mc F_1)\smf \dots \smf (\mc C_{k}\setm \mc F_{k})\in \rs{\rev{D}{\mc C^*_0}}$ is proved by induction $k=0,1\dots n_0$ and in turn $\mc D_0\in \rs{\rev{D}{\mc C^*_0}}$ follows.
 
 The only thing left to show is that $\mc \mc C^*_0\smf \mc D_0$ is equivalent to $\mc C$. However this is trivial: $\mc C^*$ is a rearrangement of $\mc C$ and the direction of an arc after reversing by either sequence is simply decided by the number cycles the arc appears in.
\end{proof}

{\DD Finally, let us mention the following corollary:}

 \begin{cor}
  If $\mc C\in \rs D$ is arbitrary and $F\subseteq A(\rev{D}{\mc C})$ is finite then there is a finite $\mc C_F\in \rs D$ so that $F\subseteq A(\rev{D}{\mc C_F})$ already.
 \end{cor}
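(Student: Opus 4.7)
The plan is to combine the canonical decomposition from Corollary~\ref{l:canonical} with the $\omega$-rearrangement of countable reversal sequences from Theorem~\ref{thm:tpoo}. Let $E_F=\{\{u,v\}:uv\in F\}$, a finite set of edges of $D$.

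First, I would isolate the portion of $\mc C$ that actually affects edges of $E_F$. By the canonical decomposition, for each $e\in E_F$ the family $\mc C^{(e)}$ is countable, and distinct $\mc C^{(e)}$ are pairwise edge-disjoint. Let $\mc C_0$ be the (countable) concatenation of those distinct components $\mc C^{(e)}$ for $e\in E_F$, in any order. The remaining cycles of $\mc C$ all lie in components $\mc C^{(f)}$ that are edge-disjoint from $E(\mc C_0)$, and in particular none of them touches an edge in $E_F$. Using the rearrangement remarks preceding Corollary~\ref{l:canonical}, the sequence $\mc C_0\smf(\mc C\setm\mc C_0)$ is an element of $\rs D$ and gives the same reversed digraph as $\mc C$.

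Since the trailing cycles $\mc C\setm\mc C_0$ do not involve any edge of $E_F$, the orientations of edges in $E_F$ in $\rev{D}{\mc C_0}$ agree with those in $\rev{D}{\mc C}$, so already $F\subseteq A(\rev{D}{\mc C_0})$. Now apply Theorem~\ref{thm:tpoo} to the countable sequence $\mc C_0$ to produce an equivalent reversal sequence $\mc C_0^*=\la D_n:n<\oo\ra\in \rs D$ of order type at most $\oo$, satisfying $\rev{D}{\mc C_0^*}=\rev{D}{\mc C_0}$. Local finiteness of $\mc C_0^*$ together with the finiteness of $E_F$ implies that only finitely many cycles $D_n$ contain an edge from $E_F$; let $k<\oo$ be strictly greater than every such index. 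Set $\mc C_F:=\mc C_0^*\uhr k$. Then $\mc C_F$ is finite and, being an initial segment of $\mc C_0^*$, lies in $\rs D$. Since no $D_n$ with $n\geq k$ touches any edge of $E_F$, the orientation of each $e\in E_F$ in $\rev{D}{\mc C_F}$ coincides with that in $\rev{D}{\mc C_0^*}=\rev{D}{\mc C}$. Hence $F\subseteq A(\rev{D}{\mc C_F})$, as required.

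The only slightly delicate point is the opening move, namely legitimately reordering $\mc C$ so that the canonical components hit by $E_F$ come first without changing $\rev{D}{\mc C}$; but this is precisely the content of the rearrangement remark preceding Corollary~\ref{l:canonical}, so no genuinely new work is needed. Everything else is a direct application of Theorem~\ref{thm:tpoo} and the observation that local finiteness lets a countable reversal sequence be truncated to a finite initial segment on any prescribed finite set of edges.
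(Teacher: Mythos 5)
Your proposal is correct and is exactly the argument the paper intends (the corollary is stated there without proof, immediately after the relevant tools): isolate the finitely many canonical components $\mc C^{(e)}$ meeting the underlying edges of $F$ via Corollary \ref{l:canonical}, rearrange the resulting countable sequence into type $\leq\oo$ by Theorem \ref{thm:tpoo}, and truncate using local finiteness. The only cosmetic remarks are that your symbol $D_n$ for the cycles clashes with the digraph $D$, and that taking initial segments of reversal sequences is being used implicitly, as the paper itself does throughout.
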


\section{Appendix C: reversing triangles versus reversing arbitrary cycles}\label{app:tri}

{\DD It is an easy exercise (already noted in \cite{gyarfas_rev}) that in any tournament $D$, the reversal of a cycle of length $k$ is equivalent to the reversion of $k-2$ many 3-cycles. In particular, the following holds.}

\begin{obs}
 Suppose that $D$ is a tournament and $\mc C\in \rs D$ is finite. Then there is sequence of 3-cycles $\mc C_\Delta\in \rs D$ so that $\rev{D}{\mc C}=\rev{D}{\mc C_\Delta}$.
\end{obs}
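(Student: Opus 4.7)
\medskip

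\noindent\textbf{Proof proposal.} The plan is to first establish, by induction on the length $k$, that the reversal of a single directed cycle $C$ of length $k$ in a tournament can be realized by reversing a finite sequence of 3-cycles; then the statement of the observation follows by iterating the single-cycle reduction along the given finite sequence $\mc C$.

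For the inductive step, suppose $C = v_0 v_1 \ldots v_{k-1} v_0$ is a cycle of length $k \geq 4$ in a tournament $D$, and consider the edge between $v_0$ and $v_2$, which must be oriented one way or the other. If $v_2 v_0 \in A(D)$, then $T = v_0 v_1 v_2 v_0$ is a directed 3-cycle; reversing $T$ first leaves a directed $(k-1)$-cycle $C' = v_0 v_2 v_3 \ldots v_{k-1} v_0$ in the new digraph $\rev{D}{T}$. Applying the induction hypothesis to $C'$ in $\rev{D}{T}$ yields a sequence of 3-cycles whose net effect, composed after $T$, is exactly to reverse $C$ (the chord $v_2 v_0$ is reversed twice and thus restored). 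If instead $v_0 v_2 \in A(D)$, then $C' = v_0 v_2 v_3 \ldots v_{k-1} v_0$ is already a directed $(k-1)$-cycle in $D$; one reverses it first (by induction, via a sequence of 3-cycles in $D$), and then observes that in the resulting digraph the triangle $v_0 v_1 v_2 v_0$ is now a 3-cycle whose reversal finishes the job.

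Once the single-cycle case is proved, the observation is immediate: write $\mc C = \la C_0, C_1, \ldots, C_{m-1} \ra$ and construct $\mc C_\Delta$ inductively. At stage $i$, having built a sequence $\mc D_i$ of 3-cycles in $\rs{D}$ with $\rev{D}{\mc D_i} = \rev{D}{\mc C \uhr i}$, we apply the single-cycle result to $C_i$ in the tournament $\rev{D}{\mc D_i}$ to obtain a finite 3-cycle sequence $\mc E_i$, and set $\mc D_{i+1} = \mc D_i \smf \mc E_i$. Then $\mc C_\Delta = \mc D_m$ works.

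There is no real obstacle here; the only point requiring a moment of care is choosing the right chord and the correct order in which to reverse the 3-cycle and the shorter cycle so that their composition actually matches the reversal of $C$ (in particular, so that the chord itself is untouched at the end). The case split on the orientation of the chord handles this cleanly.
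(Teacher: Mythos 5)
Your proposal is correct and follows essentially the same route as the paper: the paper's Proposition \ref{prop:only3} proves the single-cycle case by exactly your induction on the cycle length, splitting on the orientation of the chord between $v_0$ and $v_2$ and reversing the triangle and the $(k-1)$-cycle in the appropriate order so the chord is restored, after which the observation follows by concatenating over the finite sequence $\mc C$. The only difference is cosmetic: the paper records the slightly stronger statement that the 3-cycles can be chosen to use only edges of $C$ and edges through a fixed vertex $v$ (needed later for local finiteness in the countable case), which your argument does not track but the observation does not require.
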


{\DD Now, what can we say about infinite sequences? In particular, can we make sure that when we substitute a cycle by a sequence of 3-cycles the new sequence remains locally finite? We claim that the answer is yes, at least for countable sequences.

First, we need a slightly technical but easy statement.}

%If one looks at the proof of Theorem \ref{thm:finite}, we only used directed 3-and 5-cycles. We would like to show that this is no coincidence: any cycle reversion in a finite tournament is equivalent to reversing 3-cycles only.

%Let us start by the finite case:

\begin{prop}\label{prop:only3}
 Suppose that $D$ is a tournament and $C\in \mb C(D)$. Then for any vertex $v\in V(C)$ there is a sequence of 3-cycles $\mc C_v$ so that $\rev{D}{C}=\rev{D}{\mc C_v}$ and any edge $e\in E(\mc C_v)$ either contains $v$ or $e\in E(C)$. %Then there is sequence of 3-cycles $\mc C\in \rs D$ so that $\rev{D}{C}=\rev{D}{\mc C}$.
\end{prop}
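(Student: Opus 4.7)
I would argue by induction on the length $k=|V(C)|$ of the cycle. Write $C=vv_1v_2\cdots v_{k-1}v$ with $v_0=v$. The base case $k=3$ is immediate: take $\mc C_v=\la C\ra$, whose only edges are in $E(C)$.

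For the inductive step, I would look at the third vertex $v_2$ of $C$ and split on the direction of the tournament edge between $v$ and $v_2$. Let $T$ be the triangle on the vertices $v,v_1,v_2$.

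In the case $v_2v\in A(D)$, the arcs $vv_1, v_1v_2, v_2v$ form a directed 3-cycle $T$. Reversing $T$ turns the arc $v_2v$ into $vv_2$, while reversing $vv_1$ and $v_1v_2$ in the direction demanded by $C$. Consequently, in $\rev{D}{T}$ the sequence $C'=vv_2v_3\cdots v_{k-1}v$ is a directed cycle of length $k-1$ containing $v$. By the inductive hypothesis applied to $C'$ in the tournament $\rev{D}{T}$ there is a sequence of 3-cycles $\mc C'_v$ with $\rev{(\rev{D}{T})}{\mc C'_v}=\rev{(\rev{D}{T})}{C'}$ and whose edges are either in $E(C')$ or contain $v$. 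I would then set $\mc C_v=\la T\ra\smf \mc C'_v$. A direct count shows that the net effect on each edge of $E(C)$ is precisely its reversal and that the edge $\{v,v_2\}$ (reversed twice) is unchanged, matching $\rev{D}{C}$. Moreover $E(C')\subseteq E(C)\cup\{\{v,v_2\}\}$, so all edges used by $\mc C_v$ either lie in $E(C)$ or contain $v$.

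In the case $vv_2\in A(D)$, the cycle $C'=vv_2v_3\cdots v_{k-1}v$ already exists in $D$ and has length $k-1$; apply the inductive hypothesis to $C'$ in $D$ to obtain a sequence $\mc C'_v$. After reversing $\mc C'_v$, the arc between $v$ and $v_2$ has been flipped to $v_2v$, so $T=vv_1v_2v$ is now a directed 3-cycle in $\rev{D}{\mc C'_v}$. Setting $\mc C_v=\mc C'_v\smf \la T\ra$ and again tallying how often each edge is reversed yields $\rev{D}{\mc C_v}=\rev{D}{C}$, while the edges of $T$ are $vv_1, v_1v_2\in E(C)$ and $\{v,v_2\}$ incident to $v$. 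This completes the induction.

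The only real bookkeeping is the verification that the concatenated reversals agree with $\rev{D}{C}$ on every edge, which is a finite check on the $O(k)$ edges in question; aside from this, the main point is simply that the triangle at $\{v,v_1,v_2\}$ is oriented as a 3-cycle either before or after one applies the inductive procedure to the shorter cycle $C'$, and picking the correct order of $T$ and $\mc C'_v$ resolves both cases uniformly.
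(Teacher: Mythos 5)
Your proof is correct and follows essentially the same route as the paper: induction on the cycle length, splitting on the orientation of the chord between $v$ and $v_2$, and composing the reversal of the $(k-1)$-cycle through $v$ (handled by the inductive hypothesis) with the reversal of the triangle $vv_1v_2$, in the order dictated by the case. The bookkeeping you describe (each edge of $C$ flipped once, the chord $\{v,v_2\}$ flipped twice, and all auxiliary edges lying in $E(C)$ or meeting $v$) matches the paper's argument.
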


\begin{proof}
 We prove by induction on $k=|C|$ for all tournaments $D$ simultaneously. If $k=3$ then we can trivially take $\mc C=\la C\ra$.
 
 Suppose that $k>3$ now and we distinguish two cases: first, suppose that $v_0v_2\in A(D)$ where $C$ is on vertices $v_0 v_1\dots v_{k-1}$. Now, $C_0=v_0v_2v_3 \dots v_{k-1}$ is a $k-1$-cycle so there is a sequence $\mc C_0\in\rs D$ of 3-cycles (by the inductive hypothesis) so that  $\rev{D}{C_0}=\rev{D}{\mc C_0}$. Let $C_1=v_0v_1v_2$ and note that $C_1\in \mb C(\rev{D}{C_0})$ and $$\rev{D}{C}=\rev{D}{\la C_0,C_1\ra}=\rev{D}{(\mc C_0\smf \la C_1\ra)}.$$ In turn, $\mc C=\mc C_0\smf \la C_1\ra$ is the desired sequence of 3-cycles.
 
 Second, suppose that $v_2v_0\in A(D)$. Now, we reverse $C_0=v_0v_1v_2$ first and note that $C_1=v_0v_2\dots v_{k-1}$ is a $k-1$-cycle in $\rev{D}{C_0}$. In turn, the induction applies and we can find a sequence of 3-cycles $\mc C_1\in \rs{\rev{D}{C_0}}$ so that $$\rev{D}{C}=\rev{D}{\la C_0,C_1\ra}=\rev{D}{(\la C_0\ra\smf \mc C_1)}.$$ So $\mc C=\la C_0\ra\smf \mc C_1$ is the desired sequence of 3-cycles. 
 \end{proof}

%Obviously, Proposition \ref{prop:only3} will fail for digraphs in general: there are digraphs which avoid short cycles but have arbitrary large {\DD dichromatic} number, and so there are many cycles.

 %We will show next that this is possible for countable sequences

 \begin{cor}  If $D$ is a tournament and $\mc C\in \rs D$ is countable then there is a type $\leq \oo$ sequence of 3-cycles $\mc C_\Delta\in \rs D$ so that  $\rev{D}{\mc C}=\rev{D}{\mc C_\Delta}$. 
 \end{cor}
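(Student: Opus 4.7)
The plan is to combine Theorem \ref{thm:tpoo} with Proposition \ref{prop:only3}: first reduce to a type-$\omega$ enumeration, then replace each cycle by its triangulation, and choose pivots carefully so that the concatenated $3$-cycle sequence remains locally finite. The main obstacle — and the only nontrivial point — is this local finiteness check.

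Concretely, I would first apply Theorem \ref{thm:tpoo} to assume $\mc C = \la C_n : n\in \oo\ra$. Enumerate the countable set $V'=\bigcup_n V(C_n)$ as $\{u_0,u_1,u_2,\dots\}$, and for each $n$ pick the pivot
\[
v_n = u_{j(n)}, \quad j(n)=\max\{j:u_j\in V(C_n)\},
\]
i.e.\ the ``newest'' vertex appearing in $C_n$. Then apply Proposition \ref{prop:only3} to $C_n$, viewed as a cycle in $\rev{D}{\mc C\uhr n}$, with pivot $v_n$, obtaining a finite sequence $\mc D_n$ of $3$-cycles that reverses $C_n$. Set $\mc C_\Delta := \mc D_0\smf \mc D_1\smf\cdots$. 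Since each $\mc D_n$ is finite, $\mc C_\Delta$ has type $\leq\oo$, and a straightforward induction using Proposition \ref{prop:only3} gives $\rev{D}{\mc C_\Delta}=\rev{D}{\mc C}$.

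The delicate part is verifying that $\mc C_\Delta$ is locally finite. Fix an edge $e=\{u_i,u_j\}$. By Proposition \ref{prop:only3}, every edge in $E(\mc D_n)$ either lies in $E(C_n)$ or is incident to $v_n$, so $e\in E(\mc D_n)$ forces either $e\in E(C_n)$ or $v_n\in\{u_i,u_j\}$. The first alternative holds for only finitely many $n$ by local finiteness of $\mc C$, and the second reduces to the following key claim: each vertex $u_k$ serves as a pivot $v_n$ for only finitely many $n$. To prove the claim, note that $v_n = u_k$ forces $V(C_n)\subseteq\{u_0,\dots,u_k\}$ by the choice of $j(n)$. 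There are only finitely many directed cycles supported on this fixed finite vertex set, and each such cycle $C$ appears in $\mc C$ only finitely often, since picking any edge $e\in E(C)$ and invoking local finiteness of $\mc C$ bounds $|\{n:C_n=C\}|$. Hence $\{n:V(C_n)\subseteq\{u_0,\dots,u_k\}\}$ is finite, so $u_k$ is a pivot finitely often. Combining both alternatives, $e$ lies in only finitely many cycles of $\mc C_\Delta$, completing the proof.
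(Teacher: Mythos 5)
Your proposal is correct and follows essentially the same route as the paper: reduce to a type-$\omega$ sequence via Theorem \ref{thm:tpoo}, triangulate each $C_n$ in $\rev{D}{\mc C\uhr n}$ with Proposition \ref{prop:only3} using a carefully chosen pivot, and then verify local finiteness of the concatenated triangle sequence. The only difference is bookkeeping: where the paper picks pivots outside an exhaustion $W_0\subseteq W_1\subseteq\cdots$ with thresholds $n_i$ (Claim \ref{clm:finunion}), you take the newest vertex of $C_n$ in a fixed enumeration and note that only finitely many cycles of a locally finite sequence can be contained in any fixed finite vertex set, which is a valid and slightly more direct way to see that each vertex serves as a pivot only finitely often.
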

 We already proved this for finite $\mc C$; the slight difficulty now comes from arranging that the sequence of triangles provided by Proposition \ref{prop:only3} remains locally finite. 
 
 \begin{proof}
  First, we can suppose that $\mc C$ has type $\omega$ by Theorem \ref{thm:tpoo} i.e. $\mc C=\la C_n\ra_{n\in \oo}$. 
  
  \begin{claim}\label{clm:finunion}
   There are finite sets of vertices $W_0\subseteq W_1\subseteq \dots$ and $n_0<n_1<\dots $ in $\mb N$ so that 
  \begin{enumerate}
   \item $\bigcup V(\mc C)=\bigcup\{W_i:i<\oo\}$, and 
   \item $A(C)\cap W_i^2=\emptyset$ for any $C\in \ran(\mc C\uhr \oo\setm n_i)$ and $i<\oo$.
  \end{enumerate}
  \end{claim}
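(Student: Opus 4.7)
The plan is to build the sequences by a direct inductive construction that rides on local finiteness of $\mc C$. First I would enumerate $\bigcup V(\mc C)$ as $\{v_k:k<\oo\}$ (this set is countable since $\mc C$ has type $\oo$ and each cycle is finite) and then set $W_i=\{v_0,v_1,\dots,v_i\}$. This definition of $W_i$ automatically makes the sequence nested and forces $\bigcup_{i<\oo}W_i=\bigcup V(\mc C)$, so condition (1) will be handled trivially.

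The substantive content is in choosing the indices $n_i$. Here I would use the following key observation: since $\mc C$ is locally finite, each edge of $D$ appears in only finitely many cycles from $\ran(\mc C)$; but $W_i^2$ is a finite set of (at most $|W_i|^2$) edges, so the set
\[
S_i=\{n<\oo : A(\mc C(n))\cap W_i^2\neq \emptyset\}
\]
is a finite union of finite sets, hence finite. Then I would set
\[
n_i=\max(S_i\cup\{n_{i-1}+1\})+1,
\]
taking $n_{-1}=-1$ by convention. With this choice, $n_0<n_1<\dots$ by construction, and for every $C\in \ran(\mc C\uhr \oo\setm n_i)$ its index exceeds all elements of $S_i$, so $A(C)\cap W_i^2=\emptyset$. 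This is precisely condition (2).

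There is really no obstacle here beyond unpacking the definition of local finiteness: the only thing one has to check carefully is that $|W_i|<\oo$ and local finiteness together do imply that the set of cycles in $\mc C$ using an edge inside $W_i$ is finite, which is immediate since the union of finitely many finite sets is finite. Once the two conditions above are verified, the claim follows.
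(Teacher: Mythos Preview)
Your proposal is correct and is precisely the natural unpacking of what the paper intends: the paper does not give a detailed argument here, merely remarking that the claim ``is easily done using the local finiteness and countable size of $\mc C$.'' Your construction---enumerating the countable vertex set, taking $W_i$ to be the first $i{+}1$ vertices, and using local finiteness to bound $S_i$ and hence define $n_i$---is exactly the standard way to cash this out.
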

This is easily done using the local finiteness and countable size of $\mc C$.

Now, note that (2) implies that if $n_i\leq n< n_{i+1}$ then we can fix a vertex $v_n\in V(C_n)\setm W_{i}$. Next, let $\mc D_n$ denote the sequence of 3-cycles on vertex $v_n$ given by Proposition \ref{prop:only3} equivalent to $C_n$ in $\rev{D}{\mc C\uhr n}$. Keep in mind that any edge $e$ of a 3-cycle from $\mc D_n$ is either an edge of $C_n$ or contains $v_n$.

We claim that $\mc C_\Delta=\la \mc D_n \ra_{n\in \oo}\in \rs D$ and $\rev{D}{\mc C}=\rev{D}{\mc C_\Delta}$. The fact that $\la \mc D_n\ra_{n<m}\in \rs D$ and $\rev{D}{\mc C\uhr m}=\rev{D}{\la \mc D_n\ra_{n<m}}$ easily follows from the choice of $\mc D_n$.

So, we need that $\mc C_\Delta$ is locally finite. If $e\in E(\mc C_\Delta)$ then there is an $i$ so that $e\in [W_i]^2$. If $n\geq n_i$ then $e\notin E(C_n)$ and $v_n\notin e$. In particular, no triangle from $\mc D_n$ contains $e$. This proves that $e$ can only appear in $\la D_n \ra_{n< n_i}$, and so $\mc C_\Delta$ is locally finite.

 \end{proof}
 
 {\DD We do not know if the above corollary extends to uncountable sequences. However,} note that simply applying Proposition \ref{prop:only3} will not suffice in the uncountable case: let $D$ be a tournament of vertices $\omg$ and suppose that {\DD $C_{\alpha,n}=(\alpha, 2n, \alpha+1,  2n+1)$} is a 4-cycle for all limit $\alpha\in \omg$ and $n<\oo$. The sequence $\mc C=\la C_{\alpha,n}\ra_{\alpha\in \lim(\omg),n\in \oo}$ consists of edge-disjoint cycles so $\mc C\in \rs D$. 
 
 If we triangulate using Proposition \ref{prop:only3}, then we select a diagonal of the cycles with each triangle. However, if  $e_{\alpha,n}$ is a diagonal of $C_{\alpha,n}$ then $\{e_{\alpha,n}:\alpha\in \lim(\omg),n\in \oo\}$ is not locally finite. Indeed, if there are uncountably many $\alpha$ so that $e_{\alpha,n}\in [\oo]^2$ for some $n\in \oo$ then there is a single $e\in [\oo]^2$ so that $e=e_{\alpha,n}$ for  uncountably many $\alpha$. So, for almost all $\alpha$, we selected $e_{\alpha,n}=\{\alpha,\alpha+1\}$ for all $n\in \oo$.

\section{Open problems}

 In our (biased) opinion, the two main questions which remained open are whether the Main Theorem extends to all digraphs and if Thomass\'e's conjecture holds for arbitrary digraphs.
 
  \begin{prob}
  Suppose that $D$ is an arbitrary digraph. Is there a $\mc C\in \rs D$ so that $\rev{D}{\mc C}$ has finite strong components?
 \end{prob}
 
  \begin{prob}
  Suppose that $D$ is an arbitrary digraph. Is there a $\mc C\in \rs D$ so that $\rev{D}{\mc C}$ has {\DD dichromatic} number $\leq 2$?
 \end{prob}
 
 Both questions are open even for countably infinite digraphs.
 
 \medskip
 
 Now, regarding finite digraphs, it would be very interesting to learn more about the reversal sequences that are used to lower the {\DD dichromatic} number.
 
   \begin{prob}
  Suppose that $D$ is a finite digraph. Find a lower bound for $\crs D$ i.e. the minimal length of a sequence $\mc C\in \rs D$ so that $\dchr{\rev{D}{\mc C}}\leq 2$.
 \end{prob}
 
   \begin{prob}
 Is the bound $\crs D\leq (n-1)m$ from Corollary \ref{cor:bound} sharp?
 \end{prob}

 %How different are these bounds if $D$ is a tournament, and what happens if we are only allowed to use 3-cycles?
 
 {\DD Regarding the complexity of finding  $\crs D$ the obvious problem is NP-hardness.}
 
    \begin{prob}
Given a digraph (or tournament) $D$ and $k\in \mb N$, is the problem of deciding if $\crs D\leq k$  NP-hard?
\end{prob}

 \medskip
 
{\DD As we mentioned in Appendix C, we could not answer the next question for arbitrary uncountable reversal sequences.}
 
 \begin{prob}
  Suppose that $D$ is a tournament and $\mc C\in \rs D$ is arbitrary. Is there a sequence of 3-cycles $\mc C_\Delta\in \rs D$ so that $\rev{D}{\mc C}=\rev{D}{\mc C_\Delta}$?
 \end{prob}

 {\DD It is also reasonable to ask if the proof of the Main Theorem can be carried out using 3-cycles only.}
 
  \medskip
 
  {\DD Finally, we would like to mention two old and well known open problems from the theory of dichromatic number (independent of cycle reversions). The first is due to V. Neumann-Lara:}
 
 \begin{con} $\dchr{D}\leq 2$ for any planar digraph $D$.  
 \end{con}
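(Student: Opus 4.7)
The plan is to attempt a discharging argument on a minimal counterexample, modelled on the proof of the Four Color Theorem. Suppose for contradiction that some finite planar digraph $D$ has $\dchr{D}\geq 3$ and take one minimal in $|V(D)|$. Euler's formula applied to the underlying planar graph furnishes a vertex $v$ of total degree at most $5$, and one would try to show that every such low-degree configuration is \emph{reducible}: any $2$-dicoloring of $D-v$ extends to $D$. Simultaneously, one would build an unavoidable set of configurations, by assigning an initial charge of $d(x)-4$ to each vertex and $d(f)-4$ to each face and redistributing by local rules so that the total charge (negative by Euler) forces the appearance of some member of one's reducible list.

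To carry this out, one must enumerate the unavoidable low-degree configurations --- vertices of total degree $\leq 5$ together with every possible orientation of their incident arcs --- and, for each, exhibit a procedure for choosing a color class for $v$ (possibly after recoloring some neighbors) that avoids creating a monochromatic directed cycle. The central subtlety, absent in ordinary graph coloring, is that a $2$-dicoloring of $D-v$ does not automatically extend: placing $v$ in color class $X$ might introduce a directed cycle through $v$ inside $D\uhr X$ even when no such cycle existed in $(D-v)\uhr X$. One is thus forced to analyze the interaction of $v$ with directed path structure in its neighborhood, and to develop some analogue of Kempe-chain recoloring for dichromatic colorings.

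The main obstacle is, plainly, that this is Neumann-Lara's conjecture and has resisted all attempts for decades. Even configurations of total degree exactly $5$ appear genuinely hard, and there is no known reducibility theorem for dicolorings analogous to the Kempe-chain recoloring underlying the Five Color Theorem: swapping the two color classes along a bichromatic subdigraph does not in general preserve acyclicity of either class, because a directed path in one class can be turned into a directed cycle by the swap. A speculative route suggested by the present paper is to reverse cycles in $D$ in a controlled way --- crucially, cycle reversion preserves the underlying undirected graph and hence planarity --- but the dichromatic number is not invariant under cycle reversion, so access to $\rs D$ at best gives us a planar restriction of Thomass\'e's conjecture rather than Neumann-Lara's itself. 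Converting the considerable flexibility offered by cycle reversion into genuine control of $\dchr{D}$ in its original orientation appears to be the essential missing ingredient, and I do not see a route to it with the present techniques.
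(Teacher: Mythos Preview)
Your assessment is correct: this is Neumann-Lara's conjecture, and the paper does \emph{not} prove it. It appears in the paper's closing section on open problems, explicitly labelled as a \texttt{con} (conjecture) environment rather than a theorem, and is described as one of ``two old and well known open problems from the theory of dichromatic number (independent of cycle reversions).'' The paper only notes the partial result of Li and Mohar for digirth at least~$4$.

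So there is no proof in the paper to compare your proposal against. You have correctly identified the statement as open and correctly declined to claim a proof. Your discussion of the obstacles --- the absence of a Kempe-chain analogue for dicolorings, the fact that cycle reversion changes $\dchr{D}$ and hence cannot directly attack the conjecture in its stated form --- is accurate and to the point. The one thing to flag is framing: what you have written is not a proof proposal but an explanation of why no proof is currently available, which is the honest and appropriate response here.
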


 Quite recently, Z. Li and B. Mohar \cite{li} showed that the conjecture holds for digraphs of digirth at least 4.

 {\DD The second question is from P. Erd\H os and Neumann-Lara \cite{ENL, nessparse}:

\begin{con}\label{ENLconj} There is a function $f:\mb N\to \mb N$ so that $\chr G\geq f(k)$ implies that $\dchr{D}\geq k$ for some orientation $D$ of $G$.
\end{con}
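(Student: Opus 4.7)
The plan is to approach Conjecture \ref{ENLconj} via the probabilistic method on a suitable dense substructure of $G$. Suppose $\chr G \geq f(k)$ for some function $f$ to be determined. The standard greedy bound guarantees a subgraph $H \subseteq G$ of minimum degree at least $f(k) - 1$, and it suffices to construct an orientation of $H$ with dichromatic number at least $k$, since any extension to $G$ preserves this lower bound. I would first orient $H$ uniformly at random and try a Lov\'asz Local Lemma argument: for any fixed partition $V(H) = V_1 \cup \dots \cup V_{k-1}$ into $k-1$ classes, show that with positive probability some class $V_i$ spans a directed cycle in the random orientation. Iterating, or combining with an entropy-compression argument over all such partitions, should in principle yield a bound of the form $f(k) = 2^{O(k)}$ in favourable cases.

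A complementary angle is to search for specific orientable substructures inside $G$ that are guaranteed to host orientations of high dichromatic number. Classically, $K_n$ admits orientations (random tournaments, say) with dichromatic number $\Theta(n / \log n)$ by the Erd\H os--Moser bound, so if $G$ contained a clique of size roughly $c \cdot k \log k$ we would be immediately done. More generally, I would try to exploit results saying that large chromatic number forces certain subdivisions, long odd cycles, or topological minors, and then show that random orientations of these witnesses have dichromatic number at least $k$ with positive probability.

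The \textbf{main obstacle}, and the reason this conjecture has resisted proof for decades, is the case of sparse $K_r$-free graphs with large chromatic number, such as Mycielski-type constructions or the shift graphs of Erd\H os--Hajnal. Such graphs can have arbitrary chromatic number while being triangle-free, so the Erd\H os--Moser bounds yield nothing, and the random-orientation argument on $H$ produces very few short directed cycles --- certainly not enough to guarantee that every colour class in an arbitrary $(k-1)$-partition contains one. A successful proof would need either to extract a dense certificate from a triangle-free graph of arbitrary chromatic number, or to identify a sparse but ``orientation-robust'' witness hidden in every graph of high chromatic number; neither is currently available. Finally, I am skeptical that the cycle-reversion toolkit developed in the present paper will be directly useful, since our techniques are geared toward \emph{lowering} the dichromatic number via reversals, whereas the conjecture demands the \emph{construction} of orientations with \emph{high} dichromatic number.
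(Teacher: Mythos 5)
This statement is not a theorem of the paper at all: it is the Erd\H{o}s--Neumann-Lara conjecture, quoted in the final section as a well-known \emph{open problem}, and the paper offers no proof --- it only records the trivial value $f(2)=3$ (a graph with $\chr{G}\geq 3$ contains a cycle, which can be oriented into a directed cycle, giving $\dchr{D}\geq 2$) and explicitly states that no other value of $f$ is known. So there is nothing in the paper to compare your argument against, and your text should be judged purely on whether it closes the conjecture. It does not, and to your credit you say so yourself.

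Concretely, both of your lines of attack have the gap you identify, and it is worth naming why it is fatal rather than merely inconvenient. The Local Lemma/entropy-compression route asks, for a random orientation of a subgraph $H$ of minimum degree $f(k)-1$, that \emph{every} partition of $V(H)$ into $k-1$ classes leave some class non-acyclic; but an acyclic $(k-1)$-colouring may be chosen \emph{after} seeing the orientation, so you must beat roughly $(k-1)^{|V(H)|}$ adversarial partitions with bad events (``this class induces no directed cycle'') that are global, positively correlated with sparseness, and not locally decomposable --- in a triangle-free, locally sparse $H$ a random orientation simply has too few short directed cycles for any union bound or compression argument to survive. The clique-based route, via the Erd\H{o}s--Moser bound that random tournaments on $n$ vertices have dichromatic number $\Theta(n/\log n)$, needs a clique of size about $k\log k$, which graphs of huge chromatic number (shift graphs, Mycielskians, Kneser graphs) need not contain; and the known substructures forced by large chromatic number (odd cycles, subdivisions, topological minors) are too sparse to have orientations of large dichromatic number --- indeed subdivided graphs have dichromatic number at most $2$ in any orientation once every edge is subdivided. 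So the proposal is a reasonable survey of why the conjecture is hard, but it contains no proof, and it should not be presented as one; the one accurate structural remark you make --- that the cycle-reversion machinery of this paper is designed to lower dichromatic number and is orthogonal to constructing high-dichromatic orientations --- is consistent with how the paper itself treats the conjecture, namely as an open question beyond its methods.
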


Note that any graph $G$ with $\chr G\geq 3$ must contain a cycle and hence there is an orientation $D$ of $G$ with a directed cycle i.e. $\dchr D\geq 2$. In turn $f(2)=3$ but no other value of the function $f$ is currently known. Somewhat surprisingly, for graphs with chromatic number and size $\aleph_1$, one can say more about possible orientations and the dichromatic number \cite{dsoukup}.}

 \end{document}